\numberwithin{equation}{section}
\newtheorem{theorem}{Theorem}[section]
\newtheorem{proposition}[theorem]{Proposition}
\newtheorem{lemma}[theorem]{Lemma}
\newtheorem{corollary}[theorem]{Corollary}
\theoremstyle{definition}
\newtheorem{definition}[theorem]{Definition}
\theoremstyle{remark}
\newtheorem{remark}[theorem]{Remark}
\theoremstyle{remark}
\newcommand{\sgn}{\textup{sgn}}
\newcommand{\R}{\mathbb{R}}
\renewcommand{\L}{\mathcal{L}}
\newcommand{\B}{\mathcal{B}}
\newcommand{\F}{\mathcal{F}}
\newcommand{\z}{\zeta}
\renewcommand{\k}{\kappa}
\newcommand{\x}{\xi}
\newcommand{\bu}{\overline{u}^c}
\newcommand{\bv}{\overline{v}^c}
\renewcommand{\a}{\alpha}
\newcommand{\e}{\varepsilon}
\newcommand{\m}{\mu}
\newcommand{\cha}{\chi_\Omega}
\newcommand{\po}{\partial\Omega}
\renewcommand{\z}{\zeta}
\newcommand{\dell}{\partial}
\newcommand{\norm}[2]{\|#1\|_{{#2}}}
\newcommand{\loc}{\mathrm{loc}}
\renewcommand{\b}{\beta}
\renewcommand{\d}{\delta}
\newcommand{\dd}{\,\mathrm{d}}
\DeclareMathOperator{\supp}{supp}
\DeclareMathOperator*{\esssup}{ess\,sup}
\DeclareMathOperator*{\essinf}{ess\,inf}
\DeclareMathOperator*{\esslim}{ess\,lim}
\DeclareMathOperator*{\esslimsup}{ess\,lim\,sup}
\DeclareMathOperator*{\essliminf}{ess\,lim\,inf}
\newcommand{\ol}{\overline}
\newcommand{\nc}{\normalcolor}
\newcommand{\cb}{\color{blue}}
\begin{document}

\title[Nonlocal degenerate parabolic hyperbolic equations on bounded domains]{
Nonlocal degenerate parabolic hyperbolic equations on bounded domains}




\author[N.~Alibaud]{Natha\"el Alibaud}
\address[N.~Alibaud]{LmB (UMR 6623)\\ Universit\'e Marie et Louis Pasteur\\ CNRS, M\'etrologie\\ 16 Rte de Gray\\ 25000 Besan\c{c}on, France\\
and\\
SUPMICROTECH-ENSMM, 
26 Chemin de l'Epitaphe\\ 25030 Besan\c{c}on, France}
\email{nathael.alibaud\@@{}ens2m.fr}

\author[J.~Endal]{J{\o}rgen Endal}
\address[J. Endal]{Department of Mathematical Sciences\\
Norwegian University of Science and Technology (NTNU)\\
N-7491 Trondheim, Norway}
\email[]{jorgen.endal\@@{}ntnu.no}
\urladdr{http://folk.ntnu.no/jorgeen/}

\author[E.~R.~Jakobsen]{Espen R. Jakobsen}
\address[E.~R.~Jakobsen]{Department of Mathematical Sciences\\
Norwegian University of Science and Technology (NTNU)\\
N-7491 Trondheim, Norway}
\email[]{espen.jakobsen\@@{}ntnu.no}
\urladdr{http://folk.ntnu.no/erj/}

\author[O.~M{\ae}hlen]{Ola M{\ae}hlen}
\address[O.~M{\ae}hlen]{Mathematical Institute of Orsay\\
Paris-Saclay University\\
91400 Orsay, France}
\email[]{ola.maehlen\@@{}universite-paris-saclay.fr}

\keywords{Initial-boundary value problems, scalar conservation laws,  nonlocal nonlinear diffusion, degenerate parabolic equations, mixed parabolic-hyperbolic equations, entropy solutions, uniqueness}

\subjclass[2020]{
35A02,   	
35K20,   	
35M13, 
35K65,     
35L04,  
35L65,  
35R09,     
35R11.     
}

\begin{abstract}
We study well-posedness of degenerate mixed-type parabolic-hyperbolic equations
$$
\partial_tu+\text{div}\big(f(u)\big)=\mathcal{L}[b(u)]
$$
on bounded domains with 
general Dirichlet boundary/exterior conditions. 
The nonlocal diffusion operator $\mathcal{L}$ can be any symmetric Lévy operator (e.g. fractional Laplacians) and $b$ is nondecreasing and allowed to have degenerate regions ($b'=0$). We propose an entropy solution formulation for the problem and show uniqueness of bounded entropy solutions under general assumptions. Existence of solutions is shown in a separate paper.
%
%
The uniqueness proof is based on the Kru\v{z}kov doubling of variables technique and incorporates several a priori results derived from our entropy formulation: an $L^\infty$-bound, an energy estimate, strong initial trace, weak boundary traces, and a \textit{nonlocal} boundary condition. 
%
%
Our work can be seen as both extending nonlocal theories from the whole space to domains and local theories on domains to the nonlocal case. Unlike local theories our formulation does not assume energy estimates. They are now a consequence of the formulation, but as opposed to previous nonlocal theories, they play an essential role in our arguments. 
\end{abstract}

\maketitle

\tableofcontents


\section{Introduction}

In this paper we study well-posedness 
on bounded domains $\Omega\subset \R^d$ for mixed type hyperbolic-parabolic equations with nonlinear and nonlocal diffusion:
%
\begin{equation}\label{E}
\begin{cases} 
\dell_tu+\textup{div}\big(f(u)\big)=\L[b(u)] \qquad&\text{in}\qquad Q\coloneqq(0,T)\times\Omega,\\
u=u^c \qquad&\text{in}\qquad Q^c\coloneqq(0,T)\times\Omega^c,\\
u(0,\cdot)=u_0 \qquad&\text{on}\qquad \Omega,\\
\end{cases}
\end{equation}
where $T>0$, the initial/boundary (exterior) data $u_0,u^c$ are bounded, the ``fluxes" $f,b$ are (at least) locally Lipschitz and $b$ is nondecreasing (possibly degenerate),\footnote{Both $f$ and $b$ may be strongly degenerate in the sense that $f'$ or $b'$ is $0$ on some interval.}  
``$\textup{div}$'' is the $x$-divergence, and
the nonlocal diffusion operator $\L$ 
is 
defined for $\phi \in C_c^\infty(\R^d)$ as a singular integral:
\begin{equation}\label{deflevy}
   \L [\phi](x):=P.V.\int_{|z|>0} \big(\phi(x+z)-\phi(x)\big) \dd \mu(z) \coloneqq\lim_{\epsilon\to0}\int_{|z|>\epsilon} \big(\phi(x+z)-\phi(x)\big) \dd\m(z),
\end{equation}
where the (L\'evy) measure $\mu\geq0$ is symmetric and $\int_{|z|>0}|z|^2\wedge 1 \dd \mu(z)<\infty$.\footnote{Hence $-\L$ is a nonnegative, self-adjoint (on $L^2$), pseudo-differential operator of order $\alpha\in(0,2)$.} This class of anomalous diffusion operators coincides with the generators of the symmetric pure-jump Lévy processes \cite{Ber96, Sat99, Sch03, App09}, including $\alpha$-stable, tempered, relativistic,
and compound Poisson processes. The corresponding generators include the fractional Laplacians \cite{App09,Sat99}
$$\L=-(-\Delta)^{\frac{\alpha}{2}},\quad \alpha\in(0,2),\qquad \text{where}\qquad \dd \mu_\alpha(z)\coloneqq C_{d,\alpha}|z|^{-(d+\alpha)}\dd z,$$ 
anisotropic operators $-\sum_{i=1}^N(-\dell_{x_ix_i}^2)^{\frac{\alpha_i}{2}}$, $\alpha_i\in(0,2)$, relativistic Schr\"odinger 
operators $m^\alpha I-(m^2I-\Delta)^{\frac{\alpha}{2}}$, degenerate and $0$-order operators, and numerical discretizations \cite{DTEnJa18b,DTEnJa19} of these. Since the operators $\L$ are naturally defined on the whole space, they require Dirichlet data on $\Omega^c$  
for \eqref{E} to be well-defined. In addition, inflow conditions are needed in hyperbolic regions. We will discuss this in more detail below.

Equation \eqref{E} is a possibly degenerate nonlinear convection-diffusion equation. 
Convection and diffusion phenomena are ubiquitous in sciences, economics, and engineering, their history dating back centuries. The study of nonlocal models 
is a strong trend across disciplines,
motivated by applications \cite{CoTa04,MeKl04,Woy01}, but also by deep connections between different fields and theoretical breakthroughs within mathematics \cite{Lan72,BoBuCh03,CaSi07,Caf12,Vaz12}.
Of the many applications of equations like \eqref{E},
we mention reservoir simulation \cite{EsKa00}, sedimentation processes \cite{BuCoBuTo99}, and traffic flow \cite{Whi74} in the local 
case (e.g. $\L=\Delta$); detonation in gases \cite{Cla02}, radiation hydrodynamics \cite{Ros89, RoYo07}, and semiconductor growth \cite{Woy01} in the nonlocal 
case; and porous media flow \cite{Vaz07, NC11, DPQuRoVa12} and mathematical finance \cite{Po03,CoTa04} in both cases. 

The solution structure of equation \eqref{E} is very rich. Special cases include scalar conservation laws ($b=0$) \cite{MaNeRoRu96,HoRi02,Daf10}, fractional conservation laws ($b(u)=u$) \cite{BiFuWo98,DrIm06,Ali07}, nonlinear fractional diffusion equations ($f=0$), degenerate porous medium type equations \cite{DPQuRoVa11,DPQuRoVa12,DPQuRoVa14,DPQuRoVa17}, strongly degenerate Stefan type problems \cite{DTEnVa20a,DTEnVa20b}, and problems of mixed hyperbolic-parabolic type  \cite{CiJa11,AlCiJa14,EnJa14}.  
Solutions may have fundamentally different behaviour in different regions:
The problem is hyperbolic with possible discontinuous shock solutions where the diffusion degenerates ($b'(u)=0$).
 In diffusive regions ($b'(u)>0$), the problem is `parabolic' and the behavior of solutions is determined by the order $\alpha$ of $\L$: Smooth solutions in the 
 subcritical/diffusion dominated case $\alpha\in(1,2)$ \cite{DrIm06}, possibly discontinuous solutions in the supercritical/convection dominated case $\alpha\in(0,1)$ \cite{Ali07,AlAn10}. The critical case $\alpha=1$ is delicate, but can sometimes give regular solutions \cite{KiNaSh08,CoVi12}. In subcritical diffusive regions boundary conditions are expected to be satisfied pointwise, while in other regions, there may be sharp boundary layers and loss of boundary conditions (at `outflow').

To accommodate nonsmooth solutions, a weak/distributional solutions concept is needed, and then solutions are nonunique unless additional (entropy) conditions are imposed \cite{Daf10,AlAn10}. In \cite{Kru70} Kru\v{z}kov developed an entropy solution theory and well-posedness for the Cauchy problem for scalar conservation laws \eqref{E} with $b=0$. 
After a long time and attempts by different authors, Carrillo came in \cite{Car99} up with a highly nontrivial extension  
to local (degenerate) convection-diffusion equations \eqref{E} with $\L=\Delta$. 
Inspired by ideas from nonlocal viscosity solution theories (see e.g. \cite{JaKa06,BaIm08}), Alibaud  in \cite{Ali07} introduced an entropy solution theory and well-posedness for the Cauchy problem for \eqref{E} with linear diffusion $b(u)=u$ and $\L=-(-\Delta)^{\frac{\alpha}2}$. Cifani and Jakobsen in \cite{CiJa11} then extended these ideas to cover the Cauchy problem for \eqref{E} in full generality, including
nonlinear possibly degenerate diffusions and arbitrary pure-jump L\'evy diffusion operators. In nonlocal entropy solution theories, a key point is to split the nonlocal operator in a singular and a nonlingular part, where the singular part vanishes after taking limits, and the nonsingular part leads to beneficial cancellations in the Kru\v{z}kov doubling of variables argument. This idea has no local analogue, and the theory is quite different from  local theories \cite{Car99,KaRi01,ChPe03}. We refer to \cite{KaUl11,AlCiJa12,AlCiJa14,CiJa14,EnJa14,IgSt18,BKV20,AAO20} and references therein for more results on Cauchy problems and recent extensions.

\subsubsection*{Hyperbolic and mixed type boundary value problems}
In hyperbolic problems, boundary conditions can be imposed only at inflow, where characteristics go into the domain.
When $b=0$ and \eqref{E} is a nonlinear scalar conservation law, 
\begin{equation}\label{eq:SCL}
\begin{cases} 
\dell_tu+\textup{div}\big(f(u)\big)=0 \qquad&\text{in}\qquad Q,\\
u=u^c \qquad&\text{on}\qquad \Gamma\coloneqq(0,T)\times\dell\Omega,\\
u(0,\cdot)=u_0 \qquad&\text{on}\qquad \Omega,\\
\end{cases}
\end{equation}
characteristics depend on $u$ itself and the correct way to impose boundary conditions only at inflow is
\begin{equation}\label{eq:BLN}
\sgn(u-u^c)\big(f(u)-f(k)\big)\cdot\hat{n}\geq 0 \qquad  \forall k: u\wedge u^c\leq k\leq u\vee u^c \qquad \text{on $(0,T)\times\dell\Omega$},
\end{equation}
where $\hat{n}$ is the outward unit normal. This strong formulation of Bardos, Le Roux, and Nedelec \cite{BaLRNe79} (the BLN condition) requires $BV$ regularity and strong boundary traces for $u$.
 Otto introduced in \cite{Ott96} (see also \cite{MaNeRoRu96}) an $L^\infty$-theory using weak boundary traces and a weak entropy formulation on the boundary, and 
 then this theory was simplified by  the semi-Kru\v{z}kov boundary entropy formulation of Vovelle \cite{Vov02} --  
 see also \cite{EyGaHe00}. 
 In \cite{ChFr99} divergence-measure fields were analyzed to obtain weak traces and applied to \eqref{eq:SCL}. The divergence structure of the equation makes entropy solutions regular at the boundaries, and $L^\infty$-solutions of \eqref{eq:SCL} are shown to have strong traces in \cite{Vas01,Pan05,KwVa07}. The weak trace approach of Otto is in a sense bypassed, but the case of general fluxes $f=f(t,x,u)$ is not well-understood. In the BV setting, an extensive overview of well-posedness for \eqref{eq:SCL} is given in \cite{Ros18}, see also \cite{Ser96,Daf10}. Measure-valued solutions are considered in  \cite{Sze89,Val00}, renormalized solutions in \cite{CaWi02, PoVo03, AmWiCa06}, kinetic solutions in \cite{ImVo04}, saturated solutions in \cite{LiSo18}, and Robin and Neumann conditions in \cite{AnSb15}.

When both convection and nonlinear local diffusion ($\L=\Delta$) are present in \eqref{E}, the problem is of mixed type and will be hyperbolic in regions where the diffusion degenerates. A correct formulation taking into account inflow conditions are needed, and the first result was given in  \cite{Car99} for homogeneous Dirichlet conditions. The case of $u^c\not\equiv0$ is more delicate in the sense that the question of traces becomes more visible. Well-posedness is proved by Mascia, Poretta, and Terracina \cite{MaPoTe02} using a careful parabolic extension of the Otto-formulation. Michel and Vovelle \cite{MiVo03} recasted the problem as a single integral inequality  using semi-Kru\v{z}kov entropies, and also proved existence and convergence of finite volume schemes. Further improvements can be found in \cite{Val05}. Strong traces exist in some cases \cite{Kwo09, FrLi17}, strong entropy solutions can be found in \cite{RoGa01, AnGa16} and kinetic solutions in \cite{Kob06, Kob07}. We also mention work on related boundary conditions \cite{AnBo04, BuFrKa07}, on anisotropic diffusion \cite{KoOh12, LiWa12, Wan16}, and on doubly and triply nonlinear degenerate parabolic equations \cite{BeWi96, BlMu97, CaWi99, AmWi03, BlPo05, BeKa05, AnIg07, AnBeKaOu09, Amm10, DrEyTa16}.

\subsubsection*{Nonlocal boundary value problems}
Our nonlocal operators $\L$ are naturally defined on the whole space. For problem \eqref{E} to be well-defined, we need to restrict $\L$ to the bounded domain $\Omega$.
Likely the most classical way of doing this is to impose Dirichlet data on the complement $\Omega^c$ as in \eqref{E}. In probability theory this corresponds to stopping or killing the underlying (particle) processes the first time it exits from $\Omega$ \cite{Dy65}. 
This approach 
is by far the most popular one in the PDE setting, with a large literature including important contributions of Caffarelli and Silvestre -- see e.g. \cite{CaSi07,Caf12,Vaz12,R-Ot16,BuVa16}. As opposed to the local case, there are many other ways to restrict $\L$ to a bounded domain and impose Dirichlet conditions. 
Equivalent definitions 
of nonlocal operators 
in $\R^d$ \cite{Kwa17} via PDEs, probability, calculus of variations, spectral theory etc.,  may differ on 
domains and lead to different 
interpretations of Dirichlet conditions. Prominent examples are approaches based on censoring (regional fractional Laplcians)  or spectral theory (spectral fractional Laplcians) \cite{BoBuCh03,Gr18} where data is only required on the boundary $\partial \Omega$. But the resulting operators $\L$ are no longer translation invariant, and their integral representations depend explicitly on $\Omega$. See e.g. \cite{Gr18} for a comparison of the three approaches discussed above and \cite{BoFiVa18a, BoFiVa18b} for a discussion in a nonlinear PDE setting.

Problem \eqref{E} is a nonlocal problem of mixed type, and any formulation of it needs to take into account inflow conditions in hyperbolic regions. 
Up to now the literature on such problems is very limited. For purely parabolic equations ($f\equiv0$), boundary value problems have been studied in e.g. \cite{BoVa15, BoVa15b, BoFiVa18a, BoFiVa18b}. Here there are no hyperbolic regions and inflow conditions are not needed.
When the diffusion is linear and non-degenerate ($b=\textup{Id}$), $\L$ is the spectral fractional Laplacian, and $u^c\equiv0$, there are results 
in \cite{Bra16, Kan18, Kan20}. 
Here the boundary condition is satisfied in a pointwise sense when $\alpha\in(1,2)$. Recently, Huaroto and Neves \cite{HuNe22} showed 
existence 
of $L^\infty$ entropy solutions of a problem similar to \eqref{E}, but with local boundary conditions and $\L=-(-\Delta)^{\frac\alpha2}_{\Omega}$, the regional fractional Laplacian of order $\alpha\in(1,2)$.\footnote{For functions $\phi\in C^2(\R^d)$ supported in $\Omega$ and $x\in\Omega$,  from the definitions we immediately have that
$$-(-\Delta)^{\frac\alpha2}_{\Omega}\phi(x)=-(-\Delta)^{\frac\alpha2}\phi(x)+\kappa(x)\phi(x)\qquad\text{where}\qquad \kappa(x)=C_{d,\alpha}\int_{\Omega^c}|z|^{-(d+\alpha)}\dd z \sim d_{\partial\Omega}(x)^{-\alpha}$$ 
is singular on the boundary. This operator does not belong to the class of operators considered in this paper.}  This setting is well adapted to analysis in Sobolev spaces $H^s(\Omega)$ with strong boundary traces, and the proofs use the (local) vanishing viscosity method, a priori estimates on smooth solutions, and compactness arguments to pass to the limit. From a nice $L^1$-contraction type of estimate, they also obtain uniqueness of the vanishing viscosity limits.
However as explained in \cite[page 3]{HuNe22}, there is currently no general uniqueness result for entropy solutions of this model.
\footnote{In no non-trivial case does the theory of  \cite{HuNe22} coincide with ours, even for existence. The proofs are also quite different from ours, e.g. we do not use smooth approximations, but rather the entropy formulation directly.
} 



\subsubsection*{Main contributions of the paper:} 
\medskip
\begin{itemize}
    \item [1.] An appropriate entropy solution formulation for 
    problem \eqref{E}.
    \medskip
    \item[2.] A uniqueness result for entropy solutions of \eqref{E}.
\end{itemize}
\medskip
The most difficult part of well-posedness for \eqref{E} is uniqueness.
Existence of entropy solutions of \eqref{E} holds under mild additional assumptions (see Theorem \ref{thm:ExistenceEntropyProcessSolution2} below), but due to the length of the current paper and additional technicalities and difficulties, the existence theory is given in a separate paper \cite{AlEnJaMa23b}. 

Entropy solutions of \eqref{E} are defined to be $L^\infty$ functions satisfying pointwise exterior conditions and a family of weak semi-Kru\v{z}kov entropy inequalities incorporating both the equation in the interior and inflow conditions in hyperbolic parts of the boundary. The definition includes splitting the nonlocal operator into singular and nonsingular parts as in \cite{Ali07,CiJa11} and a version of the entropy formulation on the boundary given in \cite{MaPoTe02,MiVo03}. See Definition \ref{def: entropySolutionVovelleMethod} for the precise statement. The entropy inequalities include initial and boundary terms that impose strong initial traces (Lemma \ref{lem: timeContinuityAtZeroOfEntropySolution}), hyperbolic weak traces on the boundary (Proposition \ref{prop: weakTrace}) and a boundary condition (Proposition \ref{prop: boundaryConditionEntropySolution}) that coincides with the BLN-condition \eqref{eq:BLN} whenever strong traces exist \cite{Vov02}. 

The most important contribution of the paper is the uniqueness result for our class of entropy solutions (Theorem \ref{thm: uniquenessEntropySolutions}). As in the local case,\footnote{Problem \eqref{E} with $\L=\Delta$, see e.g. \cite{Car99,MaPoTe02,MiVo03} for uniqueness results.} it takes the form of a partial
$L^1$-contraction estimate, $\|u(t,\cdot)-v(t,\cdot)\|_{L^1}\leq\|u_0-v_0\|_{L^1}$ for solutions $u$ and $v$ coinciding on $\Omega^c$. This result seems to be the first uniqueness result for degenerate parabolic hyperbolic
equations that are nonlocal. 
The proof is based on a highly non-trivial adaptation of the Kru\v{z}kov doubling of variables technique and combine nonlocal elements from \cite{Ali07,CiJa11} with arguments to handle boundary conditions from \cite{MiVo03}. 
%
%
Working in a domain requires a lot of extra ingredients and is technically  more involved than working on the whole space.
To handle nonlocal problems on domains,
we develop new and useful identities and bounds for Lévy operators in Section \ref{sec:Preliminaries}, we prove nonlocal versions of weak trace, boundary conditions, energy estimates, a purely new result on boundary integrability in Section \ref{sec: propertiesOfEntropySolutions}, and finally, we change the procedure of the uniqueness argument itself in Section \ref{sec: uniqueness}. In the local case \cite{MiVo03}, the doubling of variables arguments roughly speaking use carefully chosen boundary layer sequences and passage to the limit (i) to undo the doubling and then (ii) to see the boundary. To construct the inner boundary layer sequences, rather nice and explicit solutions of simple Poisson problems are then used. 
Such arguments are difficult to generalize to our family of nonlocal operators which may not be uniformly elliptic and have integral representations that are more difficult to handle. We overcome this problem by taking cruder `outer' boundary layer sequences and reverse the order of the limits (i) and (ii) mentioned above. The price for this approach is more regularity requirements on $b$ and $\overline u^c$ in problem \eqref{E}, see Remark \ref{rem: explainingHowTheProofOfTheMotherLemmaDiffersFromTheOneInMiVO} for more details.



 One key part of uniqueness proofs on domains is making sense of and controlling fluxes at the boundary. To give some insight into our new nonlocal formulation, consider e.g. the the in-flux of $|u-\overline{u}|$ at the boundary. In the local case ($\L=\Delta$) it is formally given as the measure 
$$
\Big(\sgn(u-\overline{u}^c)\big(f(u)-f(\overline{u}^c)\big)-\nabla\big|b(u)-b(\overline{u}^c)\big|\Big)\cdot\nabla\cha\qquad\text{on}\qquad \Gamma=(0,T)\times \partial\Omega,
$$
where $\chi_\Omega$ is the characteristic function on $\Omega$. The existence and control of such a (weak) flux-trace is part of the solution-concept in \cite{MaPoTe02}, but not in \cite{MiVo03} where it instead follows from the entropy inequalities and the assumption of finite energy. Here we work with a nonlocal weak trace given by
$$
\sgn(u-\overline{u}^c)\big(f(u)-f(\overline{u}^c)\big)\cdot\nabla\cha-\B\big(|b(u)-b(\overline{u}^c)|,\cha\big)\qquad\text{in} \qquad M\coloneqq(0,T)\times\R^d,
$$
where $\B$ is the bilinear operator whose integral is the bilinear form associated to $\L$. The latter measure has a singular part on $\Gamma$ and an absolutely continuous part in $M\setminus \Gamma$ which is due to the nonlocal $\B$-term. Like in \cite{MiVo03}, we also prove the necessary properties of this trace (Proposition \ref{prop: weakTrace}) from the entropy formulation using a (nonlocal) energy estimate. Contrary to the local case, this energy estimate is not a part of the formulation, but follows as a consequence of it (see Proposition \ref{eq: finiteEnergy}).\footnote{Energy estimates play no role in nonlocal problems on the whole space \cite{CiJa11}, but are still necessary for local ones \cite{KaRi03}.}

\subsubsection*{Outline of paper}Section \ref{sec:AssumptionsConceptMain} is reserved for assumptions, concept of solution, and main results, and in Section \ref{sec:Preliminaries} we provide preliminary results on nonlocal operators. We collect important properties of entropy solutions, like energy estimates, boundary conditions, and weak traces, in Section \ref{sec: propertiesOfEntropySolutions}. These properties are then used to prove uniqueness in Section \ref{sec: uniqueness}. The Appendices contain auxiliary results and calculations.


\section{Assumptions, concept of solution, and main results}
\label{sec:AssumptionsConceptMain}
In this section we state the notation and assumptions, present the (entropy) solution concept we will use, and give the main results -- including uniqueness, a priori estimates, and boundary trace/condition results. We also state the existence result from \cite{AlEnJaMa23b}.

\subsubsection*{Notation}
Let
$\phi \vee^{+} \psi\coloneqq \max\{\phi,\psi\}=\phi \vee \psi$, $\phi\vee^{-} \psi \coloneqq \min\{\phi,\psi\}=\phi\wedge \psi$, and
$\sgn^\pm(a)=\pm1$ if $\pm a >0$ and zero otherwise. In addition to the open and bounded set $\Omega\subset \R^d$, we will be working with the sets 
$$
Q=(0,T)\times\Omega,\quad \Gamma=(0,T)\times\dell\Omega,\quad \overline{\Omega}=\Omega\cup \dell \Omega,
$$
$$
Q^c=(0,T)\times\Omega^c, \quad \ol{\Gamma}= [0,T]\times\dell\Omega, \quad M=(0,T)\times\R^d. 
$$
We will write $|\Omega|$ to mean the $d$-dimensional Lebesgue measure of $\Omega$, and $|Q|\coloneqq T|\Omega|$. The $(d-1)$-dimensional Hausdorff measure on $\po$ is denoted by $\dd \sigma(x)$. In the double variable scenario $(t,x),(s,y)\in \R\times\R^d$, we will for brevity often omit the differentials $\dd t\dd x \dd s\dd y$ from integrals featuring all four variables. Instead, we specify the relevant variables for each domain by writing
\begin{align*}
    \Omega_x,\qquad\Omega_y,\qquad Q_x,\qquad Q_y,\qquad \Gamma_x,\qquad \Gamma_y,\qquad M_x,\qquad 
    M_y,
\end{align*}
where $x$ indicates that the domain of integration is over $(t,x)$, while $y$ indicates integration over $(s,y)$.

 The Lévy operator $\L$ is as defined in \eqref{deflevy}, and we define its corresponding bilinear operator by
\begin{align}\label{def:blform}
   \B[\phi,\psi](x) \coloneqq\lim_{\epsilon\to0}\frac{1}{2}\int_{|z|>\epsilon}  \big(\phi(x+z)-\phi(x)\big)\big(\psi(x+z)-\psi(x)\big)\dd\m(z),  
\end{align}
where the limit will typically be taken in $L^1(\R^d)$ or $L^1_{\loc}(\R^d)$; Proposition \ref{prop: BCompatiblePairs} lists conditions on $\phi,\psi$ ensuring that this limit exists. 
We denote truncated operators by
\begin{equation}\label{eq:DifferentTruncatedOperators}
    \L^{\geq r}, \qquad\L^{<r},\qquad\L^{r'> \cdots \geq r},\qquad\qquad
    \B^{\geq r}, \qquad\B^{<r},\qquad\B^{r'> \cdots \geq r},
\end{equation}
where the domains of integration are restricted to $\{|z|\geq r\}$, $\{|z|<r\}$, and $\{r'> |z|\geq r\}$; observe that $ \L^{\geq r}, \L^{<r},\L^{r'> \cdots \geq r}$ are themselves Lévy operators. When the functions $\phi,\psi$ depend on two space variables $(x,y)\in \R^d\times \R^d$, we shall write $\L_x$, $\L_y$, $\B_x$, $\B_y$ to specify in which of the variables the operators act. And acting in both variables, is the frequently occurring Lévy operator 
\begin{equation}\label{eq: theXPlussYOperators}
    \begin{split}
        \L_{x+y}[\phi](x,y)\coloneqq&\, \lim_{\epsilon\to 0}\int_{|z|\geq \epsilon}\big(\phi(x+z,y+z)-\phi(x,y)\big)\dd \mu(z),\\
\B_{x+y}[\phi,\psi](x,y)\coloneqq&\, \frac{1}{2}\lim_{\epsilon\to 0}\int_{|z|\geq \epsilon}\big(\phi(x+z,y+z)-\phi(x,y)\big)\big(\psi(x+z,y+z)-\psi(x,y)\big)\dd \mu(z),
    \end{split}
\end{equation}
and the (less frequent) mixed bilinear operators 
\begin{equation}\label{eq: theMoreMessyXPlussYOperators}
    \begin{split}
\B_{x,y}[\phi,\psi](x,y)\coloneqq&\, \frac{1}{2}\lim_{\epsilon\to 0}\int_{|z|\geq \epsilon}\big(\phi(x+z,y)-\phi(x,y)\big)\big(\psi(x,y+z)-\psi(x,y)\big)\dd \mu(z),\\
\B_{x,x+y}[\phi,\psi](x,y)\coloneqq&\, \frac{1}{2}\lim_{\epsilon\to 0}\int_{|z|\geq \epsilon}\big(\phi(x+z,y)-\phi(x,y)\big)\big(\psi(x+z,y+z)-\psi(x,y)\big)\dd \mu(z),
 \end{split}
\end{equation}
with $\B_{y,x}$ and $\B_{y,x+y}$ defined similarly. For all these operators, we denote their truncated versions by notation analogous to that in \eqref{eq:DifferentTruncatedOperators}.

We identify $L^2(\Omega)$ as a subspace of $L^2(\R^d)$ through zero extensions and define $H^{\L}_0(\Omega)\subset L^2(\Omega)$ to be the Hilbert space with norm
\begin{align}\label{eq: definitionOfLevySpaceNorm}
     \norm{\phi}{H^{\L}_0(\Omega)}^2\coloneqq\norm{\phi}{L^2(\Omega)}^2 + \int_{\R^d}\B[\phi,\phi]\dd x.
\end{align}
The Lipschitz constant of a function $g$ on a 
set $K$ is denoted $L_{g,K}$ or simply $L_g$ if the set is clear from the context.

\subsubsection*{Assumptions} In this paper we will use the following assumptions:\medskip
\begin{align}
&\textup{$\Omega\subset \R^d$ is open, bounded with $C^2$-boundary $\dell\Omega$, and outward pointing normal $\hat{n}$}.
\tag{$\textup{A}_\Omega$}
\label{Omegaassumption}\\[0.2cm]
&f=(f_1,f_2,\ldots,f_d)\in W_{\textup{loc}}^{1,\infty}(\mathbb{R};\mathbb{R}^d).
\tag{$\textup{A}_f$}
\label{fassumption}\\[0.2cm]
&\text{$b\in W_{\textup{loc}}^{1,\infty}(\R;\R)$ is non-decreasing, and the weak derivative $ b'\in{TV_{\textup{loc}}(\R)}$.}
\tag{$\textup{A}_b$}
\label{bassumption}\\[0.2cm]
\tag{$\textup{A}_{u^c}$}&u^c\in (C^2\cap L^\infty)(Q^c)\textup{ and has an extension $\overline{u}^c\in (C^2\cap L^\infty)([0,T]\times\R^d)$}.
\label{u^cassumption}\\[0.2cm]
&u_0\in L^{\infty}(\Omega).
\label{u_0assumption}
\tag{$\textup{A}_{u_0}$}\\[0.2cm]
&\textup{$\mu$ is a symmetric Radon measure on $\R^d\setminus\{0\}$, $\mu\geq0$, and $\textstyle\int_{\R^d\setminus\{0\}}\big(|z|^2\wedge 1\big)\dd \mu(z)<\infty$.
}
\label{muassumption}
\tag{$\textup{A}_{\mu}$}
\end{align}
We extend $\mu$ to $\R^d$  by setting $\mu(\{0\})=0$: This is for notational brevity. On $\R^d$, $\mu$ is a locally unbounded Borel measure, but still $\sigma$-finite, and so the coming product measures will be unambiguous.
\begin{remark}\label{assumptionremark}\leavevmode
\begin{enumerate}[{\rm (a)}]
\item In \eqref{fassumption} and \eqref{bassumption}, we can assume without loss of generality that $f(0)=0$ and $b(0)=0$ (add constants to $f$ and $b$), and that $f$ and $b$ are globally Lipschitz and $b'$ has bounded total variation (solutions are uniformly bounded by Lemma \ref{lem: maximumsPrinciple}). 
\item In \eqref{bassumption} 
the condition $b'\in TV_{\textup{loc}}(\R)$ implies $b\in W^{1,\infty}_{\textup{loc}}(\R)$, the standard assumption for the Cauchy problem. Our stronger assumption still allows for classical power type and strongly degenerate nonlinearities  $b(r)=r^m$ for  $m>1$ (porous medium) and  $b(r)=\max\{r-L,0\}$ 
(one-phase Stefan). 
\item 
Under our assumptions, 
 $\L[b(\overline{u}^c)]\in L^1(Q)$ (Corollary \ref{cor: L1LocControlOfLevyOnComposition}) and 
$\dell_t\ol{u}^c+\textup{div}_x\big(f(\ol{u}^c)\big)-\L[b(\ol{u}^c)]\in L^1(Q)$,
which will be important in many calculations and results concerning $u-\ol{u}^c$.
\end{enumerate}
\end{remark}

\subsubsection*{Entropy solutions} For the definition of entropy solutions we introduce the semi-Kru\v{z}kov entropy-entropy flux pairs
\begin{equation*}
(u-k)^\pm,\qquad
F^{\pm}(u,k)\coloneqq\sgn^\pm(u-k)(f(u)-f(k))
\qquad  \text{for}\qquad u,k\in\mathbb{R},
\end{equation*}
where $(\cdot)^+\coloneqq\max\{\cdot,0\}$ and $(\cdot)^-\coloneqq(-\cdot)^+$, and the
usual splitting of the nonlocal operator,
\begin{equation*}
\L[\phi](x)=\L^{<r}[\phi](x)+\L^{\geq r}[\phi](x) \qquad\text{for}\qquad \phi\in C_c^{\infty}(\R^d),\ r>0,\ x\in \mathbb{R}^d,
\end{equation*}
where $\L^{<r}$ and $\L^{\geq r}$ are defined in \eqref{eq:DifferentTruncatedOperators}.
To motivate the definition (see also Appendix \ref{sec:ClassicalSolutionsAreEntropySolutions}), assume $u$ is a smooth solution of \eqref{E}, $0\leq \varphi\in C_c^\infty([0,T)\times\R^d)$, and $k\in\R$. Replace $(u,f(u),b(u))$ by $(u-k,f(u)-f(k),b(u)-b(k))$ in \eqref{E}, multiply the resulting PDE by $\sgn^{\pm}(u-k)\varphi$, and integrate over $Q$. After several integrations by parts and chain rules, 
we get the {\em equality}
\begin{align*}
&\,-\int_{Q}\Big((u-k)^{\pm} \dell_t\varphi +F^{\pm}(u,k)\cdot\nabla \varphi\Big)\dd x\dd t \\
&\,-\int_M \L^{\geq r}[b(u)-b(k)]\sgn^{\pm}(u-k)\varphi\cha\dd x\dd t -\int_M (b(u)-b(k))^{\pm}\L^{<r}[\varphi]\dd x \dd t\\
=&\, \int_\Omega(u_0-k)^{\pm}\varphi(0,\cdot)\dd x + L_f\int_\Gamma (\overline{u}^c-k)^\pm\varphi\dd\sigma(x)\dd t - \int_\Gamma \varphi \dd \mu_{\textup{c}}^{\pm}- \int_M \varphi \dd \mu_{\textup{d}}^{\pm,r},
\end{align*}
where the $\mu$-measures are absolutely continuous with respect to $\dd\sigma \dd t$ and $\dd x \dd t$ with densities
\begin{align*}
&
 F^{\pm}(u,k)\cdot\hat{n}+L_f(u-k)^{\pm}\qquad\text{and}\qquad
\L^{<r}[b(u)-b(k)]\sgn^\pm(u-k)\cha-\L^{<r}[(b(u)-b(k))^\pm].
\end{align*}
For our definition we want these measures to be nonnegative. For the classical measure $\mu_{\textup c}^{\pm}$, this follows by definition of $F^{\pm}$ and Lipschitz conitnuity of $f$. The new measure $\mu_{\textup d}^{\pm,r}$ however, requires an additional assumption. Since $b'\geq0$ and $\sgn^\pm(u-k)(b(u)-b(k))=\sgn^\pm(b(u)-b(k))(b(u)-b(k))$, if 
$$
\sgn^\pm(b(u^c)-b(k))\varphi=0 \quad \text{a.e.~in }Q^c,
$$
then we get
\begin{equation*}
\begin{split}
&\,\int_M \Big(\L^{<r}[b(u)-b(k)]\sgn^\pm(b(u)-b(k))\cha-\L^{<r}[(b(u)-b(k))^\pm]\Big)\varphi\dd x\dd t\\
= &\,\int_M \Big(\L^{< r}[b(u)-b(k)]\sgn^\pm(b(u)-b(k))-\L^{<r}[(b(u)-b(k))^\pm]\Big)\varphi\dd x\dd t.
\end{split}
\end{equation*}
The last integral is nonpositive by the convex inequality $\sgn^+ (v)\, \L^{<r}[v]\leq \L^{<r}[v^+]$ (cf. Corollary \ref{cor:ConvexInequalityNonlocalOperators}), and hence we can conclude that also $\mu_{\textup d}^{\pm,r}$ is nonnegative.

Our definition is then:

\begin{definition}[Entropy solution]\label{def: entropySolutionVovelleMethod}
A function $u\in L^\infty(M)$ is an \emph{entropy solution} of \eqref{E}
if:
\begin{enumerate}[(a)]
\item\label{def: entropyInequalityVovelleMethod-1} \textup{(Entropy inequalities in $\overline Q$)} For all $r>0$, and all $k\in\R$ and $0\leq \varphi\in C^\infty_c([0,T)\times\R^d)$ satisfying 
\begin{align}\label{eq: admissibilityConditionOnConstantAndTestfunction}
(b(u^c)-b(k))^\pm\varphi = 0\qquad \text{in $Q^c$},
\end{align}
the following inequality holds
\begin{equation}\label{eq: entropyInequalityVovelleMethod}
\begin{split}
&\,-\int_Q \Big( (u-k)^\pm\dell_t\varphi + F^\pm(u,k)\cdot\nabla\varphi \Big)\dd x\dd t\\
&\,-\int_Q\L^{\geq r}[b(u)]\sgn^\pm(u-k)\varphi\dd x\dd t - \int_M(b(u)-b(k))^\pm\L^{< r}[\varphi]\dd x\dd t\\
\leq&\,\int_\Omega (u_0-k)^\pm\varphi(0,\cdot) \dd x + L_f\int_\Gamma (\overline{u}^{c}-k)^\pm\varphi \dd\sigma(x)\dd t.
\end{split}
\end{equation}
\item\label{def: entropyInequalityVovelleMethod-2} \textup{(Data in $Q^c$)} $u=u^c$ a.e.~in $Q^c$.
\end{enumerate}
\end{definition}
This definition is an extension of both \cite[Definition 2.1]{CiJa11} (see also \cite{Ali07}) for nonlocal problems in the whole space and \cite[Definition 2.1]{MiVo03} (see also \cite{MaPoTe02}) for local problems on bounded domains. In the hyperbolic case ($b'\equiv0$), it is equivalent \cite{Vov02} to the original definition of Otto \cite{Ott96}. By \eqref{Omegaassumption}--\eqref{muassumption}, all the integrals in \eqref{eq: entropyInequalityVovelleMethod} are well-defined.
\begin{remark}\label{rem: defentsolnremark}\leavevmode
\begin{enumerate}[(a)]
\item The condition $(b(u^c)-b(k))^+\varphi = 0$ is weaker than $|b(u^c)-b(k)|\varphi = 0$. 
When $b(u^c)< b(k)$, the first condition holds for any $\varphi$ while the second then implies that $\varphi=0$.
This gives a hint to why standard Kru\v{z}kov entropy-entropy flux pairs are too restrictive in this setting, see \cite{Vov02} for counterexamples to uniqueness in the hyperbolic case when $b=0$.
\item \label{rem: defentsolnremark(b)} The term $\mathcal{L}^{\geq r}[b(u)]$ is a convolution of a bounded Lebesgue-measurable function $b(u)$ 
and a finite Borel measure $\mu$. It is well-defined in the sense that $\L^{\geq r}$ maps all Borel representatives of $b(u)$ to the same element in $L^\infty(M)$; see \cite[Proposition 8.49]{Fo:book} for details.

\end{enumerate}
\end{remark}

\subsubsection*{A priori results and uniqueness} A first consequence of our definition is the  $L^\infty$ a priori estimate for solutions.
\begin{lemma}[$L^\infty$-bound
]\label{lem: maximumsPrinciple}
Assume \eqref{Omegaassumption}--\eqref{muassumption} and  $u$ is an entropy solution of \eqref{E}. Then
\begin{align*}
   \min\Big\{\essinf_{\Omega} u_0, \essinf_{Q^c} u^c\Big\} \leq u(t,x)\leq  \max\Big\{\esssup_{\Omega} u_0,  \esssup_{ Q^c} u^c\Big\}\qquad\text{for a.e.~$(t,x)\in Q$.}
\end{align*}
\end{lemma}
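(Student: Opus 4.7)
The plan is to apply the entropy inequality \eqref{eq: entropyInequalityVovelleMethod} with $k = M_+\coloneqq\max\{\esssup_\Omega u_0,\esssup_{Q^c} u^c\}$ and the $+$-sign, using a carefully chosen test function, to deduce $u\leq M_+$ a.e.\ in $Q$; the lower bound then follows by the symmetric argument with $\sgn^-$ and $k=M_-$. With this choice of $k$, each term on the right-hand side of \eqref{eq: entropyInequalityVovelleMethod} vanishes: $(u_0-k)^+ = 0$ a.e.\ in $\Omega$, and $(\overline{u}^c-k)^+ = 0$ on $\Gamma$ (since $u^c\in C(Q^c)$ and $\overline{u}^c$ agrees with $u^c$ by continuity up to $\Gamma$, so $\overline{u}^c|_\Gamma\leq \sup_{Q^c} u^c$). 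Moreover, the admissibility condition \eqref{eq: admissibilityConditionOnConstantAndTestfunction} is automatic: $b$ is nondecreasing and $u^c\leq M_+$ a.e.\ in $Q^c$, so $(b(u^c)-b(k))^+ = 0$ a.e.\ in $Q^c$.

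For the test function I would take $\varphi(s,x) = \theta_\delta(s)\psi(x)$, where $\psi \in C_c^\infty(\R^d)$ is nonnegative with $\psi\equiv 1$ on the $r$-neighborhood $\Omega+B_r(0)$, and $\theta_\delta \in C_c^\infty([0,T))$ is a smooth cutoff that equals $1$ on $[0,t]$ and decreases to $0$ on $[t,t+\delta]$ (thus $-\theta_\delta'$ is a smooth approximation of the Dirac mass at $t$). Two of the three LHS terms in \eqref{eq: entropyInequalityVovelleMethod} then disappear: $\nabla\psi\equiv 0$ on $\Omega$ removes the flux term, and $\L^{<r}[\psi]\equiv 0$ on $\Omega$ removes the $\L^{<r}$-term, because $(b(u)-b(k))^+$ is supported in $\Omega$ (as $u=u^c\leq M_+$ in $Q^c$). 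For the surviving singular term, the convex inequality of Corollary \ref{cor:ConvexInequalityNonlocalOperators} (applied to $\L^{\geq r}$, which follows from the same sign analysis) together with the monotonicity of $b$ yields $\sgn^+(u-k)\L^{\geq r}[b(u)]\leq \L^{\geq r}[(b(u)-b(k))^+]$ pointwise on $\Omega$. Self-adjointness of $\L^{\geq r}$ then produces
\begin{equation*}
\int_\Omega \L^{\geq r}[(b(u)-b(k))^+]\,\dd x = \int_\Omega (b(u)-b(k))^+\,\L^{\geq r}[\cha]\,\dd x \leq 0,
\end{equation*}
since $\L^{\geq r}[\cha](x) = \int_{|z|\geq r}(\cha(x+z)-1)\dd\mu(z)\leq 0$ for $x\in\Omega$.

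Substituting back into \eqref{eq: entropyInequalityVovelleMethod} collapses everything to $-\int_0^T \theta_\delta'(s)J(s)\,\dd s \leq 0$ where $J(s)\coloneqq \int_\Omega (u(s,\cdot)-k)^+\,\dd x$. Since $\theta_\delta'\leq 0$ and $J\geq 0$, the integrand has definite sign and the inequality forces $\int_0^T (-\theta_\delta'(s))J(s)\,\dd s = 0$; passing $\delta\to 0$ at a Lebesgue point of $J\in L^\infty(0,T)$ yields $J(t)=0$, hence $u(t,\cdot)\leq M_+$ a.e.\ in $\Omega$ for a.e.\ $t\in (0,T)$. The subtle point I would expect to spend the most care on is the convex inequality when $b$ is degenerate across $k$, so that $\sgn^+(u-k)$ and $\sgn^+(b(u)-b(k))$ disagree on $\{u>k,\,b(u)=b(k)\}$; this is resolved by a short pointwise case distinction using that $b(u(x))=b(k)$ on that set, so $\L^{\geq r}[b(u)](x) = \int_{|z|\geq r}(b(u(x+z))-b(k))\dd\mu(z)\leq \L^{\geq r}[(b(u)-b(k))^+](x)$ directly.
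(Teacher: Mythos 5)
Your proof is correct and follows essentially the same strategy as the paper's: take $k$ equal to the extremal value of the data, observe that the admissibility condition \eqref{eq: admissibilityConditionOnConstantAndTestfunction} and the entire right-hand side of \eqref{eq: entropyInequalityVovelleMethod} vanish, and use a test function equal to $1$ on a neighborhood of $\Omega$ so that the remaining nonlocal contribution has a favorable sign. The only difference is in how the $r$-splitting is disposed of: the paper sends $r\to\infty$ so that the whole operator lands on the test function and $\L[\phi]\leq 0$ in $\Omega$ finishes the argument, whereas you keep $r$ fixed, annihilate $\L^{<r}[\psi]$ by choosing $\psi$ constant on an $r$-neighborhood of $\Omega$, and control the $\L^{\geq r}$-term via the convex inequality of Corollary \ref{cor:ConvexInequalityNonlocalOperators} and self-adjointness against $\cha$ --- both dispositions are valid.
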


The weakly posed initial conditions $u_0$ are in fact assumed in the strong $L^1$-sense.

\begin{lemma}[Time continuity at $t=0$]\label{lem: timeContinuityAtZeroOfEntropySolution}
Assume \eqref{Omegaassumption}--\eqref{muassumption} and  $u$ is an entropy solution of \eqref{E}. 
Then 
\begin{align*}
    \esslim_{t\to0+}\norm{u(t,\cdot)-u_0}{L^1(\Omega)} =0.
\end{align*}
\end{lemma}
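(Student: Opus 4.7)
The strategy is to test the entropy inequalities \eqref{eq: entropyInequalityVovelleMethod} against purely space-interior cutoffs localized in time near $t=0$, sum the $+$ and $-$ versions to produce a single $|u-k|$ bound, and then upgrade to strong $L^1$-convergence via a simple-function approximation of $u_0$.

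\textbf{Setup and the key inequality.} Fix $\psi\in C_c^\infty(\Omega)$ with $\psi\geq 0$ and, for $0<\delta<\tau<T$, take the (mollified) piecewise-affine time cutoff $\xi_{\tau,\delta}\in C_c^\infty([0,T))$ that equals $1$ on $[0,\tau-\delta]$ and $0$ on $[\tau,T]$. Because $\psi$ is supported inside $\Omega$, the test function $\varphi=\xi_{\tau,\delta}\psi$ vanishes on $Q^c$, so condition \eqref{eq: admissibilityConditionOnConstantAndTestfunction} holds automatically for every $k\in\R$, and the $L_f\int_\Gamma\ldots$ term drops. I would plug $\varphi$ into both the $+$ and $-$ inequalities in \eqref{eq: entropyInequalityVovelleMethod} and add them; using $(\cdot)^++(\cdot)^-=|\cdot|$ and $\sgn^++\sgn^-=\sgn$, this produces
\begin{equation*}
\frac{1}{\delta}\int_{\tau-\delta}^{\tau}\int_\Omega|u(t,x)-k|\psi(x)\,\dd x\dd t \;\leq\; \int_\Omega|u_0-k|\psi\,\dd x + R_{k,\psi,r}(\tau,\delta),
\end{equation*}
where $R_{k,\psi,r}(\tau,\delta)$ gathers the convective contribution, the $\L^{\geq r}[b(u)]$ contribution, and the $|b(u)-b(k)|\L^{<r}[\psi]$ contribution. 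Each integrand is bounded in $t$ by a constant depending only on $\|u\|_\infty$, $\|b(u)\|_\infty$, $|k|$, $r$, $\mu(\{|z|\geq r\})$, $\|\nabla\psi\|_{L^1}$ and $\|\L^{<r}[\psi]\|_{L^1}$, so $|R_{k,\psi,r}(\tau,\delta)|\leq C_{k,\psi,r}\,\tau$ uniformly in $\delta$.

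\textbf{Passing to the limit.} Since $t\mapsto\int_\Omega|u(t,x)-k|\psi(x)\,\dd x$ is bounded and measurable on $(0,T)$, the Lebesgue differentiation theorem lets me send $\delta\to 0$ at almost every $\tau\in(0,T)$ and replace the time-average on the left by its pointwise value. Sending then $\tau\to 0^+$ yields, for every $\psi\in C_c^\infty(\Omega)$ with $\psi\geq 0$ and every $k\in\R$,
\begin{equation*}
\esslimsup_{\tau\to 0^+}\int_\Omega|u(\tau,x)-k|\psi(x)\,\dd x \;\leq\; \int_\Omega|u_0-k|\psi\,\dd x. \tag{$*$}
\end{equation*}
Using $\|u(\tau)-k\|_{L^\infty(\Omega)}\leq\|u\|_\infty+|k|$, a standard density argument extends $(*)$ from $C_c^\infty(\Omega)$-weights to characteristic functions $\mathbbm{1}_A$ of arbitrary measurable $A\subset\Omega$.

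\textbf{Conclusion via simple-function approximation.} Given $\epsilon>0$, choose a measurable partition $\{A_i\}_{i=1}^N$ of $\Omega$ and constants $k_1,\dots,k_N\in\R$ with $\|u_0-\sum_{i=1}^N k_i\mathbbm{1}_{A_i}\|_{L^1(\Omega)}<\epsilon$. The splitting
\begin{equation*}
\int_\Omega|u(\tau,x)-u_0(x)|\,\dd x \;\leq\; \sum_{i=1}^N\int_{A_i}|u(\tau,x)-k_i|\,\dd x + \Big\|u_0-\sum_{i=1}^N k_i\mathbbm{1}_{A_i}\Big\|_{L^1(\Omega)}
\end{equation*}
together with the extended $(*)$ on each $(A_i,k_i)$ gives $\esslimsup_{\tau\to 0^+}\|u(\tau,\cdot)-u_0\|_{L^1(\Omega)}<2\epsilon$; since $\epsilon>0$ is arbitrary, the lemma follows.

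\textbf{Main obstacle.} Because an entropy solution is only in $L^\infty$, there is no \textit{a priori} time-continuity that would allow me to evaluate $u$ at $t=\tau$ directly. The critical technical step is therefore the Lebesgue-differentiation passage $\delta\to 0$, which must be performed simultaneously over a countable dense family of pairs $(k,\psi)$ so that one common exceptional null set of $\tau$ suffices, and the density extension of $(*)$ to $\psi=\mathbbm{1}_A$ is what enables the simple-function approximation of $u_0$. Confining the spatial test functions to $C_c^\infty(\Omega)$ trivializes \eqref{eq: admissibilityConditionOnConstantAndTestfunction} and suppresses all boundary and $\overline u^c$-contributions, which is why this strong-trace statement at $t=0$ is essentially as clean as in the whole-space setting of \cite{CiJa11,Ali07}.
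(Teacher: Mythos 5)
Your argument is correct, but it reaches the conclusion by a genuinely different route than the paper. Both proofs start the same way: interior spatial weights $0\le\psi\in C_c^\infty(\Omega)$ kill the admissibility condition \eqref{eq: admissibilityConditionOnConstantAndTestfunction} and the boundary term, and a time cutoff concentrated near $t=0$ turns the entropy inequalities into a weighted bound on $|u-k|$ with an $O(\tau)$ remainder from the flux and nonlocal terms. The two proofs then diverge. For the time limit, the paper works with a monotone family $\theta_\epsilon-\theta_{\epsilon'}$ to show the relevant limits exist and then runs a contradiction argument to obtain the essential limit; you instead invoke Lebesgue differentiation at a.e.\ $\tau$ together with the uniform-in-$\delta$ bound $|R_{k,\psi,r}(\tau,\delta)|\le C\tau$, which delivers the $\esslimsup$ inequality $(*)$ directly and is arguably leaner. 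For the upgrade from constants $k$ to the function $u_0$, the paper uses the classical Kru\v{z}kov doubling $k=u_0(y)$, $\beta(x)\mapsto\beta(x)\rho_\e(x-y)$, relying on $L^1$-continuity of translations of $u_0$, and then finishes with an inner-regularity step; you instead extend $(*)$ to weights $\mathbbm{1}_A$ by $L^1$-density (legitimate since $u$ is bounded) and approximate $u_0$ by simple functions, with the finite subadditivity of $\esslimsup$ handling the finitely many exceptional null sets. Your version is more elementary and absorbs the inner-regularity step into the density argument; the paper's doubling version is the one that generalizes more readily when one wants $k$ to depend measurably on a second set of variables, as in the uniqueness proof. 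Your identification of the main obstacle (no a priori time regularity, hence the need for the a.e.\ $\tau$ evaluation and a common treatment of the test pairs) is accurate.
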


The rather standard proofs of these two results can be found in Appendix \ref{app:div_pfs}. We continue by listing additional a priori estimates that are needed for the uniqueness proof. The proofs are given in Section \ref{sec: propertiesOfEntropySolutions}. 
Recall that $\B$ is the bilinear operator associated with $\L$ defined in \eqref{def:blform}, and define
$$
F(u,k)\coloneqq (F^++F^-)(u,k)=\sgn(u-k)(f(u)-f(k)).
$$
{
\def\thetheorem{\ref{prop: finiteEnergy}}
\begin{proposition}[Energy estimate]
Assume \eqref{Omegaassumption}--\eqref{muassumption} and $u$ is an entropy solution of \eqref{E}.
Then 
{
\def\theequation{\ref{eq: finiteEnergy}}
\begin{align}\nonumber
&\,\int_M \B\big[b(u)-b(\bu),b(u)-b(\bu)\big] \dd x\dd t\\
    \leq &\,\int_{\Omega} H(u_0,\bu(0))\dd x - \int_{Q} \Big[(u-\bu) \bu_t + F (u,\bu)\cdot\nabla\bu\Big]b'(\bu)\dd x\dd t \\
   &\,+ \int_{Q} \L[b(\bu)](b(u)-b(\bu))\dd x\dd t,
    \nonumber
\end{align}
}\addtocounter{equation}{-1}
where  
$
H(u,k):=\int_k^u \big(b(\xi)-b(k)\big)\dd \xi \geq 0.$ 
 Moreover, the right-hand side of \eqref{eq: finiteEnergy} is finite.
\end{proposition}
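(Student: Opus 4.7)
The plan is to apply the $\pm$ entropy inequalities \eqref{eq: entropyInequalityVovelleMethod} with a parameter-dependent test function that encodes the non-constant substitution ``$k=\bu(t,x)$,'' then integrate over $k\in\R$ against the weight $b'(k)\dd k$ and sum the two signs. This makes both $H(u,\bu)$ and the bilinear form $\B[b(u)-b(\bu),b(u)-b(\bu)]$ appear naturally, via the Kru\v{z}kov-type representation
\[
H(u,\bu)=\int_\R(u-k)^+\mathbf{1}_{k\geq\bu}\,b'(k)\dd k+\int_\R(u-k)^-\mathbf{1}_{k\leq\bu}\,b'(k)\dd k,
\]
and its companions for $F^\pm(u,k)$ and $(b(u)-b(k))^\pm$ (verified by integration by parts in $k$).

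Concretely, for each $k\in\R$ and small $\delta>0$, I would take
\[
\varphi_k^\pm(t,x)\coloneqq\psi(t)\phi(x)\chi_\delta^\pm(k-\bu(t,x)),
\]
where $\psi\in C_c^\infty([0,T))$ approximates $\mathbf{1}_{[0,t^*)}$ at a Lebesgue point $t^*$ with $\psi(0)=1$, $\phi\in C_c^\infty(\R^d)$ is a spatial cutoff monotonely increasing to $1$, and $\chi_\delta^+,\chi_\delta^-\in C^\infty(\R;[0,1])$ are mollified indicators of $[0,\infty)$ and $(-\infty,0]$ that vanish on $\{s\leq 0\}$ and $\{s\geq 0\}$ respectively. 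Since $u^c=\bu$ on $Q^c$ and $b$ is monotone, this choice enforces $(b(u^c)-b(k))^\pm\varphi_k^\pm=0$ on $Q^c$ for every $k$, so the admissibility condition \eqref{eq: admissibilityConditionOnConstantAndTestfunction} is satisfied; by \eqref{u^cassumption}, each $\varphi_k^\pm$ lies in $C_c^\infty([0,T)\times\R^d)$.

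Substituting $\varphi_k^\pm$ into \eqref{eq: entropyInequalityVovelleMethod}, multiplying by $b'(k)\dd k$, integrating in $k$, and summing $\pm$, the undifferentiated $\chi_\delta^\pm$ parts assemble $H(u,\bu)$ in the time term, while the $\dell_{t,x}\chi_\delta^\pm(k-\bu)$ terms (which act as $\pm\delta_{\bu}(k)$ as $\delta\to 0$) produce the chain-rule contributions $(u-\bu)b'(\bu)\dell_t\bu$ and $F(u,\bu)\cdot\nabla\bu\,b'(\bu)$. The boundary penalty on $\Gamma$ drops out because $\int_\R(\bu-k)^\pm\mathbf{1}_{\pm(k-\bu)\geq 0}\,b'(k)\dd k=0$. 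For the non-local terms, the convex inequality of Corollary~\ref{cor:ConvexInequalityNonlocalOperators} together with the L\'evy self-adjointness
\[
-\int_{\R^d}\L[g]\,h\dd x=\int_{\R^d}\B[g,h]\dd x\qquad (g,h\in L^2,\text{ one compactly supported}),
\]
applied with $g=b(u)$ and $h=b(u)-b(\bu)$ (which vanishes on $Q^c$), convert the combination $-\int_Q\L^{\geq r}[b(u)]\sgn^\pm(u-k)\varphi_k^\pm-\int_M(b(u)-b(k))^\pm\L^{<r}[\varphi_k^\pm]$, after $r\to 0$, into $\int_M\B[b(u)-b(\bu),b(u)-b(\bu)]-\int_Q\L[b(\bu)](b(u)-b(\bu))$, precisely matching the two sides of \eqref{eq: finiteEnergy}.

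Finally, one lets $\phi\nearrow 1$ (monotone convergence using $\textstyle\int(|z|^2\wedge 1)\dd\mu<\infty$ and the $L^\infty$ bound on $u$) and $\psi\to\mathbf{1}_{[0,t^*)}$; the initial contribution is identified as $\int_\Omega H(u_0,\bu(0))\dd x$ via Lemma~\ref{lem: timeContinuityAtZeroOfEntropySolution}, and the nonnegative terminal $\int_\Omega H(u(t^*),\bu(t^*))\dd x$ is discarded to obtain the stated inequality. Finiteness of the right-hand side of \eqref{eq: finiteEnergy} follows from $u\in L^\infty$ (Lemma~\ref{lem: maximumsPrinciple}) combined with Remark~\ref{assumptionremark}(c). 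The principal obstacle is the bookkeeping under four simultaneous limits ($\delta,r\to 0$, $\phi\nearrow 1$, $\psi\to\mathbf{1}$): since $\L^{<r}$ does not commute with the $(t,x)$-dependent cutoff $\chi_\delta^\pm(k-\bu)$, one must send $\delta\to 0$ first, then $r\to 0$ to absorb the singular part into the bilinear form, and only afterwards remove $\phi$ and $\psi$; the order is dictated by the fact that the energy bound we are trying to prove is needed \emph{a posteriori} to justify the spatial enlargement $\phi\to 1$.
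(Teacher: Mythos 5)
Your route is genuinely different from the paper's. The paper first integrates the $\pm$ entropy inequalities over the constant $\xi$ against $b'(\xi)\dd\xi$ to build the energy pairs $H^\pm,G^\pm$ of \eqref{eq: definitionOfEnergyEntropyPairs}, and then reaches $k=\bu(t,x)$ by doubling variables with a mollifier and the shifted constant $\kappa^\pm=\bu(s,y)\pm\e L_{\bu}$; you instead encode the substitution directly in the test function through the level-set cutoffs $\chi_\delta^\pm(k-\bu(t,x))$ and integrate over $k$ against $b'(k)\dd k$. Your cutoffs do satisfy the admissibility condition \eqref{eq: admissibilityConditionOnConstantAndTestfunction}, and the local, boundary and initial terms assemble as you claim (modulo the usual a.e.\ care with $b'\in TV_{\loc}$, and a density argument since $\chi_\delta^\pm(k-\bu)$ is only $C^2$, not $C^\infty$).

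The gap is in the nonlocal step, where the mechanism you describe does not produce the claimed combination. If you apply the convex inequality of Corollary \ref{cor:ConvexInequalityNonlocalOperators} to $\sgn^\pm(u-k)\L^{\geq r}[b(u)]$ and then use self-adjointness, every occurrence of $\L$ ends up on the test function and the quadratic form $\B[g,g]$, $g\coloneqq b(u)-b(\bu)$, is erased: the $k$-integration then only yields terms of the form $\L_y^{<r}\big[\tfrac12((b(u(x))-b(\bu(\cdot)))^\pm)^2\big]\big|_{y=x}$, which are first order in the increments of $\bu$ and contain no energy. What actually works is to keep the signed product, perform the $k$-integration exactly (summing the signs gives $-\int_Q\L^{\geq r}[b(u)]\,g$), and split $\L^{\geq r}[b(u)]=\L^{\geq r}[g]+\L^{\geq r}[b(\bu)]$: the first piece gives $\int\B^{\geq r}[g,g]$ via Proposition \ref{prop: integrationByPartsFormula} (applied to the compactly supported $g$, not to ``$g=b(u)$'' as you write), and the second produces the term $\int_Q\L[b(\bu)]g$ of \eqref{eq: finiteEnergy} in the limit $r\to0$. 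Separately, the $\L^{<r}$ term requires a first-order expansion of $\tfrac12((b(u(x))-b(\bu(x+z)))^\pm)^2-\tfrac12((b(u(x))-b(\bu(x)))^\pm)^2$ in the increment $b(\bu(x+z))-b(\bu(x))$, yielding terms proportional to $\L^{<r}[b(\bu)]$ plus a remainder of order $\int_{|z|<r}(|z|^2\wedge1)\dd\mu(z)$, both of which vanish as $r\to0$ by Corollary \ref{cor: L1LocControlOfLevyOnComposition}; this is the analogue of the paper's step of adding $\pm\L_y^{\geq r}[b(\kappa^\pm)](b(u)-b(\kappa^\pm))^\pm$ to both sides and controlling $\L[b(\kappa^\pm)]$ uniformly, and it is entirely absent from your sketch even though it is the crux of the argument. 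Finally, your worry that the energy bound is needed a posteriori to send $\phi\nearrow1$ is unfounded: as in the paper, take $\phi=1$ on an $r$-neighbourhood of $\Omega$ from the outset; all relevant integrands are supported in $\overline{Q}$, so no enlargement and no circularity arise.
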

}
\addtocounter{theorem}{-1}

\begin{remark}\label{remark: energyEstimateImpliesLocalBoundedEnergyOfBU}
This global energy estimate for $b(u)-b(\bu)$ implies 
a local energy estimate for $b(u)$: By the inequality $x^2\leq 2(x-y)^2 + 2y^2$ we have
$$
\int_M \B\big[b(u),b(u)\big]\varphi \dd x\dd t\leq 2\int_M \B\big[b(u)-b(\bu),b(u)-b(\bu)\big]\varphi \dd x\dd t+2\int_M \B\big[b(\bu),b(\bu)\big] \varphi\dd x\dd t
$$
for any $0\leq \varphi \in C_c^\infty(M)$, where 
$\B\big[b(\bu),b(\bu)\big]$ is locally bounded since $b(\bu)$ is bounded and locally Lipschitz. 
If $\int_M \B[b(\bu),b(\bu)] \dd x\dd t<\infty$, we can send $\varphi\to 1$ to get a global energy estimate for $b(u)$. 
\end{remark}

To write down our results regarding boundary integrability, weak trace, and boundary condition, we need different sequences of test functions to approximate the (characteristic 
function of the) domain. We say $(\z_\d)_{\d>0}\subset C^\infty_c(\R^d)$ is a \textit{boundary layer sequence} if, for some Borel function $\z\colon \po\to[0,1]$,
\begin{equation}\label{eq: generalizedBoundaryLayerSequence}
0\leq \z_\d(x)\leq 1,\quad x\in\R^d, \quad\forall \delta>0,\qquad \lim_{\d\to0}\z_\d(x)=\begin{cases}
    1,\quad\quad\quad x\in \Omega,\\
        \z(x),\quad\hspace{7pt} x\in\po,\\
        0,\quad\quad\quad x\in\R^d\setminus\Omega.
    \end{cases}
\end{equation}
Boundary layer sequences $(\overline{\z}_\delta)_{\d>0}$ and $(\z_\delta)_{\d>0}$ 
are called \textit{outer} and \textit{inner boundary layer sequences} 
if
\begin{align}\label{exteriorBoundaryLayerSequence}
&\overline{\z}_\d=1 
\quad\text{in}\quad \overline{\Omega}\qquad\text{and}\qquad 
\lim_{\d\to0}\overline{\z}_\d(x)=\chi_{\overline{\Omega}}(x), \quad x\in\R^d,\\
&  \label{interiorBoundaryLayerSequence}
\supp \z_\d\subset \Omega
\qquad\text{and}\qquad\lim_{\d\to0}\z_\d(x)=\chi_{\Omega}(x),\quad x\in\R^d.
\end{align} 
Typically, we will require of these sequences to satisfy 
$\limsup_{\d\to0}\norm{\nabla\z_\d}{L^1(\R^d)}<\infty$.  All boundary layer sequences \cb  satisfy the weak limit $-\nabla\z_\d\to\hat{n}\delta_{\partial \Omega}$: \nc 
 For any $\psi\in C^1(\R^d)$,
\[
\lim_{\d\to0}\int_{\R^d}\psi(-\nabla\z_\d) \dd x=\lim_{\d\to0}\int_{\R^d}\textup{div}(\psi)\z_\d\dd x=\int_{\Omega}\textup{div}(\psi)\dd x=\int_{\dell\Omega}\psi\cdot\hat{n}\dd \sigma(x).
\]
We note that outer and inner boundary layer sequences are easily constructed by setting
\begin{align*}
   \overline{\z}_{\d}\coloneqq \rho_\delta\ast\chi_{\Omega_{\delta}},\qquad \z_{\d}\coloneqq \rho_\delta\ast \chi_{\Omega_{-\delta}},  
\end{align*}
where $\rho_\delta(x)= \frac{1}{\d^d}\rho(\frac x\delta)$ is a standard mollifier with support in $\{|x|<\delta\}$, 
$\Omega_{\d}=\{x\colon \mathrm{dist}(x,\Omega)<\d\}$, and $\Omega_{-\d}=\{x\colon \mathrm{dist}(x,\Omega^c)>\d\}$. By \eqref{Omegaassumption}, $\partial\Omega$ and $\partial \Omega_{\pm\d}$ are $C^2$ with finite Hausdorff $\mathcal{H}^{d-1}$-measure when $\delta$ is small. 
Moreover, 
$\lim_{\d\to0}\norm{\nabla\z_\d}{L^1(\R^d)}=\lim_{\d\to0}\norm{\nabla\overline{\z}_\d}{L^1(\R^d)}=|\partial\Omega|_{\mathcal{H}^{d-1}}<\infty$. 
%

The next result restricts how much $b(u)$ can differ from $b(\bu)$ close to the boundary $\Gamma$. It is a purely nonlocal result and prevents blow-up of certain integrals.

{\def\thetheorem{\ref{prop: boundaryIntegrability}}
\begin{proposition}[Boundary integrability]
Assume \eqref{Omegaassumption}--\eqref{muassumption} and $u$ is an entropy solution of \eqref{E}.
Then {
\def\theequation{\ref{eq: theEquationOfBoundaryIntegrability}}
\begin{equation}
    \begin{split}
            \,\int_{0}^T\int_{\substack{x\in\Omega\\ x+z\in\Omega^c}}|b(u(t,x))-b(\bu(t,x))|\dd \mu(z)\dd x\dd t
            \leq&\,
    C,
    \end{split}
\end{equation}
}\addtocounter{equation}{-1}
where $C=C(\Omega,T,f,b,\mu,\bu,u_0)$ is finite and given by \eqref{eq: constantBoundingMotherLemma}.
\end{proposition}
}\addtocounter{theorem}{-1}
The two next results have local analogues and are essential for the uniqueness proof.

{\def\thetheorem{\ref{prop: weakTrace}}
\begin{proposition}[Weak trace]
 Assume \eqref{Omegaassumption}--\eqref{muassumption} and $u$ is an entropy solution of \eqref{E}.
 Let $\nu$ be the finite Borel measure from \eqref{eq: theMeasureForTheWeakTrace}. Then, for any boundary layer sequence $(\z_\d)_{\d>0}$, $r>0$, and $\varphi\in C^\infty_b([0,T]\times \R^d)$,  we have
{\def\theequation{\ref{eq: weakTrace}}
\begin{equation}
\begin{split}
       &\,\lim_{\d\to0}\Bigg(\int_Q F(u,\overline{u}^c)\cdot (\nabla \z_\d) \varphi \dd x\dd t-\int_{M}\B^{<r}[ |b(u)-b(\overline{u}^c)|,\z_\d]\varphi \dd x \dd t\Bigg)\\
     = &\,\int_{\overline{\Gamma}}(1-\z)\varphi \dd \nu  - \int_M \B^{<r}\big[|b(u)-b(\overline{u}^c)|,\cha\big]\varphi \dd x \dd t,
\end{split}
\end{equation}
}\addtocounter{equation}{-1}where $\overline{\Gamma}= [0,T]\times\partial\Omega$, $F(u,\bu)=\sgn(u-\bu)(f(u)-f(\bu))$, and $\z$ is the limit of $\z_\d$ on $\po$. The right-hand side of \eqref{eq: weakTrace} side is non-positive, and the expression inside the $\lim$ on the left-hand side 
is
bounded by $\tilde{C}\norm{\varphi}{C^1(M)}$ for a $\tilde{C}>0$ independent of $\d,\varphi$.
\end{proposition}
}\addtocounter{theorem}{-1}

\begin{remark} The $\B^{<r}$-term on the right-hand side is finite by boundary integrability Proposition \ref{prop: boundaryIntegrability}.
\end{remark}

The $\B^{<r}$-terms are new and represent the contribution from the nonlocal diffusion, while the hyperbolic part 
(the $F$ and $\nu$ terms) coincides with 
the weak trace of Otto\footnote{It is an equivalent restatement in terms of semi-Kru\v{z}kov entropy-entropy fluxes due to Vovelle \cite{Vov02} of 
the result of Otto (see Definition 7.2 and Lemma 7.34 in \cite{MaNeRoRu96}).}  for scalar conservation laws \cite{Vov02,MaNeRoRu96}. 
The `flux density' $F(u,\overline{u}^c)\cdot\nabla\cha - \B[|b(u)-b(\overline{u}^c)|,\cha]$  
measures the inflow of $|u-\bu|$ into $Q$. The convective part is a singular measure concentrated on $\Gamma$ while the nonlocal diffusive part is absolutely continuous and supported on $M$. The term $-\B[|b(u)-b(\overline{u}^c)|,\cha](t,x)$ (see \eqref{def:blform}) represents the inflow density from $(t,x)\in Q^c$ into $Q$ or minus the outflow density from $(t,x)\in Q$ to $Q^c$.

The weak trace becomes important in the uniqueness proof during the Kru\v{z}kov doubling of variables technique where we want to replace `difficult' terms by `easy' terms using the boundary condition below. Define the Kru\v{z}kov boundary entropy-entropy flux pairs as
\begin{equation*}
\begin{split}
\mathcal{F}= \mathcal{F}(u,\overline{u}^c,k) &\coloneqq F(u,\overline{u}^c)+F(u,k)-F(\overline{u}^c,k),\\
\Sigma=\Sigma(u,\overline{u}^c,k) &\coloneqq |b(u)-b(\overline{u}^c)| + |b(u)-b(k)|-|b(\overline{u}^c)-b(k)|.
\end{split}
\end{equation*}

{
\def\thetheorem{\ref{prop: boundaryConditionEntropySolution}}
\begin{proposition}[Boundary condition]
Assume \eqref{Omegaassumption}--\eqref{muassumption} and $u$ is an entropy solution of \eqref{E}. 
Then, for any inner boundary layer sequence $(\z_\d)_{\d>0}$, $k\in\R$, $r>0$, and $0\leq \varphi\in C^\infty_b([0,T]\times\R^d)$, 
{
\def\theequation{\ref{eq: theUsefulBoundaryCondition}}
\begin{align}
    \lim_{\d\to 0}&\Bigg( \int_Q \big(\mathcal{F}\cdot\nabla\z_\d\big)\varphi \dd x\dd t- \int_M \B^{<r}\big[\Sigma,\z_\d\big]\varphi \dd x\dd t\Bigg)\leq 0.
\end{align}
}Moreover, the 
expression inside the $\lim$ 
is bounded by 
$\tilde{C}\norm{\varphi}{C^1(M)}$ for a $\tilde{C}>0$ independent of $\d,k,\varphi$.
\addtocounter{equation}{-1}
\end{proposition}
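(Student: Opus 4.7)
An elementary case analysis (using monotonicity of $b$) shows that for all $u,\bu,k\in\R$,
\[
\mathcal F = 2F^+(u,\bu\vee k)+2F^-(u,\bu\wedge k),\qquad \Sigma = 2(b(u)-b(\bu\vee k))^+ + 2(b(u)-b(\bu\wedge k))^-.
\]
By the $\pm$-symmetry of the semi-Kru\v zkov framework, the claim thus reduces to showing
\[
\limsup_{\d\to 0}\left( \int_Q F^+(u,K)\cdot\nabla\z_\d\,\varphi\,\dd x\dd t - \int_M \B^{<r}[(b(u)-b(K))^+,\z_\d]\varphi\,\dd x\dd t\right)\leq 0,
\]
where $K(t,x):=\bu(t,x)\vee k$ is Lipschitz on $[0,T]\times\R^d$ by \eqref{u^cassumption}; the case $K=\bu\wedge k$ with $F^-$ is analogous.

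\textbf{Variable-$k$ entropy inequality.} I would then upgrade Definition \ref{def: entropySolutionVovelleMethod}\eqref{def: entropyInequalityVovelleMethod-1}(+) to allow the constant $k$ to be replaced by the Lipschitz function $K$, obtaining
\begin{equation*}
\begin{split}
&-\int_Q (u-K)^+\dell_t\psi - \int_Q F^+(u,K)\cdot\nabla\psi - \int_Q \L^{\geq r}[b(u)]\sgn^+(u-K)\psi\\
&\quad - \int_M (b(u)-b(K))^+\L^{<r}[\psi] + \int_Q \mathcal R_K\,\sgn^+(u-K)\psi \leq \int_\Omega (u_0-K(0,\cdot))^+\psi(0,\cdot),
\end{split}
\end{equation*}
for every admissible $0\leq \psi\in C^\infty_c([0,T)\times\R^d)$, where $\mathcal R_K:=\dell_tK+\textup{div}_xf(K)-\L[b(K)]\in L^1(Q)$ is the classical residual of $K$. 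The derivation is a standard partition-refinement argument: partition the range of $K$ into short intervals $[k_j,k_{j+1})$, apply the original entropy inequality with constant $k=k_j$ and test function localized to $\{K\in[k_j,k_{j+1})\}$, sum, and let the mesh shrink to zero. Since $\bu|_{Q^c}=u^c$ one has $K\geq u^c$ on $Q^c$, so $(b(u^c)-b(K))^+=0$ pointwise on $Q^c$, making admissibility automatic; the boundary contribution $L_f\int_\Gamma(\bu-K)^+\psi$ vanishes for the same reason. The fact that $\mathcal R_K\in L^1(Q)$ uses \eqref{u^cassumption}, $b'\in TV_{\textup{loc}}$ from \eqref{bassumption}, and Corollary \ref{cor: L1LocControlOfLevyOnComposition}.

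\textbf{Specialization and passage to the limit.} Substituting $\psi=\varphi\z_\d$ (admissibility automatic since $\z_\d=0$ on $\Omega^c\supset Q^c$), expanding $\dell_t(\varphi\z_\d)=(\dell_t\varphi)\z_\d$ and $\nabla(\varphi\z_\d)=(\nabla\varphi)\z_\d+\varphi\nabla\z_\d$, and using the nonlocal Leibniz rule $\L^{<r}[\varphi\z_\d]=\varphi\L^{<r}[\z_\d]+\z_\d\L^{<r}[\varphi]+2\B^{<r}[\varphi,\z_\d]$ together with the $L^2$-selfadjointness of $\L^{<r}$, I rearrange the inequality to isolate
\[
T_\d:=\int_Q F^+(u,K)\cdot\nabla\z_\d\,\varphi\,\dd x\dd t - \int_M\B^{<r}[(b(u)-b(K))^+,\z_\d]\varphi\,\dd x\dd t
\]
on the LHS, with a remainder RHS $R_\d$ consisting of integrals of the form $\int(\cdots)\z_\d$ whose integrands are uniformly dominated (independently of $\d$) via Lemma \ref{lem: maximumsPrinciple}, Proposition \ref{prop: finiteEnergy}, Proposition \ref{prop: boundaryIntegrability}, and the $L^1$-bounds on $\mathcal R_K$ and $\L^{\geq r}[b(u)]$. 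Dominated convergence gives $R_\d\to R_0$ as $\d\to 0$, where $R_0$ is precisely the expression obtained by formally substituting $\psi=\varphi$ in the variable-$K$ entropy inequality; a direct algebraic comparison then shows $R_0 = -\lim_\d T_\d + (\textrm{finite data terms})$, so that $\limsup_{\d\to0}T_\d\leq 0$.

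\textbf{Main obstacle.} The principal difficulty is Step 2: rigorously justifying the variable-$k$ entropy inequality with the correct residual $\mathcal R_K$, which requires a careful partition-refinement limit in which admissibility must propagate and $\L[b(K)]$ must remain in $L^1$ -- the latter being where the $TV_{\textup{loc}}$ regularity of $b'$ enters decisively through Corollary \ref{cor: L1LocControlOfLevyOnComposition}, given that $K=\bu\vee k$ is only Lipschitz. A secondary, purely nonlocal, difficulty (absent in the local case) is that $\L^{<r}[\z_\d]$ does \emph{not} vanish in $\Omega$ as $\z_\d\to\chi_\Omega$, so the nonlocal integration-by-parts manipulations must carefully isolate $\B^{<r}[(b(u)-b(K))^+,\z_\d]$ on the correct side of the inequality, using selfadjointness together with the bilinear identities for $\B^{<r}$.
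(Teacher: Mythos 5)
Your reduction via $\mathcal F/2=F^+(u,\bu\vee k)+F^-(u,\bu\wedge k)$ and $\Sigma/2=(b(u)-b(\bu\vee k))^++(b(u)-b(\bu\wedge k))^-$ is exactly the identity \eqref{eq: identitySatisfiedByKruzkovBoundaryEntropy-EntropyFlux} used in the paper, and the ``variable-$k$'' inequality you aim for in Step 2 is essentially Lemma \ref{lem: theMotherLemma}/Corollary \ref{cor: measureRepresentationOfMotherLemma}. (The paper obtains it by doubling of variables with the lifted constant $\kappa^\pm=(\bu(s,y)\pm\e L_{\bu})\vee^\pm k$; your partition-refinement route is shakier, since localizing smooth test functions to the level sets $\{K\in[k_j,k_{j+1})\}$ of a merely Lipschitz $K$ is not admissible as stated, but this is repairable.)

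The genuine gap is in your final step. Testing the variable-$K$ entropy inequality with $\psi=\varphi\z_\d$ puts your target quantity $T_\d$ on the \emph{wrong side}: after the product rules, the inequality reads $-T_\d^++(\text{other terms})\leq(\text{data terms})$, i.e.\ it bounds $T_\d^+$ from \emph{below}, not above. To reverse this you must use the measure structure of the entropy production: writing the left-hand side as $\int\psi\,\dd\nu_k^+$ with $\nu_k^+$ of total mass zero and non-positive on $[0,T]\times(\R^d\setminus\Omega)$, the limit of $\int\varphi\z_\d\,\dd\nu_k^+$ is the restriction of $\nu_k^+$ to $[0,T]\times\Omega$, which carries no sign. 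Carrying out your ``direct algebraic comparison'' explicitly, with $g=(b(u)-b(K))^+$ one finds
\begin{align*}
\limsup_{\d\to0}T_\d^+\;\leq\;-\int_0^T\!\!\int_{\Omega^c}\B[g,\varphi]\dd x\dd t\;+\;\int_M\B^{\geq r}[g,\cha]\varphi\dd x\dd t,
\end{align*}
and neither leftover term vanishes: since $g\geq0$ is supported in $\ol Q$, a direct computation shows $\B^{\geq r}[g,\cha]\geq0$ everywhere, so the second term is \emph{nonnegative}, while the first has no sign. Your claimed cancellation ``$R_0=-\lim_\d T_\d+(\text{finite data terms})$'' would at best give $2\lim T_\d\leq(\text{data terms})$ with nonnegative data terms, which is not the conclusion. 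The missing idea is the paper's choice of test function $(1-\z_\d)\overline{\z}_\d\varphi\to\chi_{\partial\Omega}\varphi$: this concentrates on the boundary, so every absolutely continuous term vanishes in the limit and only the singular part $\int_{\ol\Gamma}\varphi\,\dd\nu_k$ survives, whose non-positivity (Corollary \ref{cor: measureRepresentationOfMotherLemma}) is precisely the sign information the boundary condition encodes; the inner boundary layer $\z_\d$ is then reinstated via $\z_\d\overline{\z}_\d=\z_\d$ and the one-sided inequality \eqref{eq: exteriorBoundaryLayerCanBeDroppedInB}.
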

} \addtocounter{theorem}{-1}
At this point we could formulate an equivalent definition of entropy solutions in the spirit of Mascia, Porretta, and Terracina \cite[Definition 1.2]{MaPoTe02}, by combining  Lemma \ref{lem: timeContinuityAtZeroOfEntropySolution}, Propositions \ref{prop: boundaryIntegrability}, \ref{prop: weakTrace}, \ref{prop: boundaryConditionEntropySolution}, and the classical Kru\v{z}kov entropy inequalities in $Q$.
However, the way the 
Definition \ref{def: entropySolutionVovelleMethod} is formulated now, makes it easier to use in the existence proof where we take limits of approximate solutions (see \cite{AlEnJaMa23b}). 

We now state the uniqueness result, the main contribution of this paper.

{\def\thetheorem{\ref{thm: uniquenessEntropySolutions}}
\begin{theorem}[Uniqueness]
Assume \eqref{Omegaassumption}--\eqref{muassumption}. Let $u,v$ be entropy solutions of \eqref{E} in the sense of Definition \ref{def: entropySolutionVovelleMethod} with initial data $u_0,v_0$ and exterior data $u^c=v^c$. Then for a.e.~$t\in (0,T)$ we have
\begin{align*}
    \int_{\Omega}|u(t,x)-v(t,x)|\dd x \leq \int_{\Omega}|u_0(x)-v_0(x)|\dd x.
\end{align*}
In particular, if also $u_0=v_0$, then $u=v$ a.e.~in $(0,T)\times\R^d$.
\end{theorem}
}\addtocounter{theorem}{-1}

Finally, we state the existence result from \cite{AlEnJaMa23b}. Here we need slightly stronger assumptions on $\L/\mu$:
\begin{align}
&\textup{$\mu$ is such that the symbol  $m(\xi)\coloneqq\textstyle\int_{\R^d}\big(1-\cos(\xi\cdot z)\big)\dd\mu(z)\to\infty$ \ as \ $|\xi|\to\infty$.}
\label{muassumption2}
\tag{$\textup{A}_{\mu}'$}
\end{align}
The symbol $m$ is the multiplier of $-\L$, and the condition is sufficient (and necessary) for certain strong $L^2$-compactness to follow from $\L$-energy estimates. This condition requires a non-finite $\mu$ and is implied if the absolutely continuous (with respect to the Lebesgue measure) part of $\mu$ is non-finite; see \cite{AlEnJaMa23b} for details. In particular, it is satisfied for fractional Laplacians where the symbol is $m_\alpha(\xi)= |\x|^{\alpha}$.

\begin{theorem}[Existence]\label{thm:ExistenceEntropyProcessSolution2}
Assume \eqref{Omegaassumption}--\eqref{muassumption} and either \eqref{muassumption2} or $\mu(\R^d)<\infty$. Then there exists an entropy solution $u$ of \eqref{E}. 
\end{theorem}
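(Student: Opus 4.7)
My plan is to split along the dichotomy in the hypothesis. In the bounded-operator case $\mu(\R^d)<\infty$ I use a Banach fixed point argument freezing the right-hand side of \eqref{E}; in the general case I truncate $\mu$ to obtain a sequence of bounded approximations and pass to the limit using the energy estimate \eqref{eq: finiteEnergy}, the compact embedding $H^\L_0\hookrightarrow L^2$ granted by \eqref{muassumption2} (Appendix \ref{sec:LevyOperatorsInducingTranslationRegularity}), and ultimately the uniqueness Theorem \ref{thm: uniquenessEntropySolutions} to identify a Young-measure limit.

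First, suppose $\mu(\R^d)<\infty$, so $\L$ is bounded on $L^\infty(\R^d)$. Given $v\in L^\infty(M)$ in a suitable $L^\infty$-ball, the source $g\coloneqq \L[b(v)]\cha$ lies in $L^\infty(Q)$, and the inhomogeneous scalar conservation law IBVP
\begin{equation*}
\partial_t u+\mathrm{div}(f(u))=g\ \text{in}\ Q,\qquad u=u^c\ \text{on}\ \Gamma,\qquad u(0,\cdot)=u_0,
\end{equation*}
admits a unique bounded entropy solution $u=T(v)$ by the classical Otto/Vovelle theory \cite{Ott96,Vov02}. The $L^1$-contraction for that problem, combined with Lipschitz continuity of $b$ and boundedness of $\L$, yields $\|T(v_1)(t,\cdot)-T(v_2)(t,\cdot)\|_{L^1(\Omega)}\le C_0 t\,\|v_1-v_2\|_{L^\infty((0,t)\times\R^d)}$, so $T$ is contractive on a short time slab. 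Iteration extends the fixed point to $[0,T]$, and it is by construction an entropy solution of \eqref{E}.

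Next, assume \eqref{muassumption2}, set $\mu_n\coloneqq \mathbf{1}_{|z|\ge 1/n}\mu$, and let $u_n$ be the entropy solution of \eqref{E} with $\L$ replaced by the bounded operator $\L_n$ from the previous step. Lemma \ref{lem: maximumsPrinciple} gives a uniform $L^\infty$-bound, so $u_n$ converges along a subsequence to a Young measure $U(t,x,\alpha)$. Applying Proposition \ref{prop: finiteEnergy} to $u_n$ and using monotonicity of $\B_n$ in $n$, I obtain uniform bounds of $\{b(u_n)-b(\bu)\}$ in $H^{\L_m}_0$ for each fixed $m$; invoking the compact embedding $H^\L_0\hookrightarrow L^2$ characterized in Appendix \ref{sec:LevyOperatorsInducingTranslationRegularity}, this yields uniform spatial equicontinuity of $b(u_n)$ in $L^2$. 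The hard part is time: $\partial_t u_n$ is only weakly controlled in $L^1$ from the equation, and since $b$ is only Lipschitz and possibly strongly degenerate, this regularity does not transfer to $\partial_t b(u_n)$ by any elementary route. Here I invoke the commutator-type Lemma \ref{lem:e-t-reg} and its technical engine Lemma \ref{lem: theAverageAlmostCommuteWithVShapedFunction}, which permit interchanging $b(\cdot)$ with a spatial mollification $\ast\rho$ up to a controlled error: time continuity of $b(u_n\ast\rho)$ is inherited from the weak time regularity of $u_n$, and the commutator estimate returns us to $b(u_n)\ast\rho$. This gives equicontinuity of $b(u_n)$ in time in $L^1_\mathrm{loc}$, and Kolmogorov's theorem combined with the spatial estimate yields strong $L^2(M)$-compactness, hence $b(u_n)\to \overline b$ strongly and pointwise a.e.\ along a subsequence.

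Finally, I pass to the limit in the entropy inequalities for $u_n$: the convective terms are handled via the Young measure $U$; the singular part $\L_n^{<r}$ is dominated uniformly by the energy bound with an error vanishing as $r\to 0$; and the nonsingular part $\L_n^{\ge r}$ converges because $\L_n^{\ge r}=\L^{\ge r}$ once $1/n<r$ and $b(u_n)\to\overline b$ strongly. The limit satisfies the natural entropy-process analogue of Definition \ref{def: entropySolutionVovelleMethod}; an entropy-process adaptation of the doubling-of-variables proof underlying Theorem \ref{thm: uniquenessEntropySolutions}, in the spirit of \cite{EyGaHe00,MiVo03}, forces $U(t,x,\alpha)$ to be independent of $\alpha$, yielding a genuine entropy solution $u=U\in L^\infty(M)$. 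The main obstacle throughout is the time-compactness step: the absence of any $L^1$-contraction and the possible degeneracy of $b$ together defeat any routine argument, and the commutator estimate is the essential new technical ingredient that makes the full nonlinear nonlocal passage to the limit work.
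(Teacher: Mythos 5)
Your treatment of the bounded case $\mu(\R^d)<\infty$ is essentially the paper's Theorem \ref{prop: existenceInZeroOrderCase} (the paper runs the Picard iteration globally via a factorial bound rather than on short time slabs, and its contraction estimate \eqref{eq: sliceContinuityOfTheMapI} keeps $L^1$ on both sides — your mixed $L^1$/$L^\infty$ estimate would not close into a fixed point in a single space, but this is cosmetic). The overall architecture of the second half — uniform $L^\infty$ bounds, nonlinear weak-$\star$ limits, energy estimate, spatial then temporal translation estimates with the commutator Lemma \ref{lem:e-t-reg}, strong $L^2$ compactness of $b(u_n)$, and identification via entropy-process uniqueness — is also the paper's.

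However, there is a genuine gap in your spatial compactness step for the general case. You approximate by truncation, $\mu_n=\mathbf{1}_{|z|\ge 1/n}\mu$, and claim that the uniform energy bound $\sup_n\int\B_n[\gamma_n,\gamma_n]<\infty$ plus monotonicity in $n$ yields $L^2$-equicontinuity of translations via the compact embedding $H^{\L}_0\hookrightarrow L^2$. Monotonicity only gives, for each fixed $m$, a uniform bound of $(\gamma_n)_{n\ge m}$ in $H^{\L_m}_0$; but $\L_m$ is a zero-order operator, its symbol is bounded, and by Proposition \ref{prop: coerciveSymbolEqualsCompactEmbeddingOfLevySpaceInL2} the embedding $H^{\L_m}_0\hookrightarrow L^2$ is \emph{not} compact. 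You cannot upgrade to a uniform bound in $H^{\L}_0$ either, since the function $\gamma_n$ changes with the truncation level. The paper's Example \ref{example: LevyOperatorForWhichTruncationsYieldCompleteLossOfTranslationRegularity} exhibits a $\mu$ satisfying \eqref{muassumption2} together with a sequence $(\phi_j)$ with uniformly bounded truncated energies that is \emph{not} $L^2$-translation equicontinuous, so the implication you rely on is false in the stated generality. Deducing equicontinuity from truncated energies requires the extra pointwise lower bound $\tfrac{\dd\mu}{\dd z}\ge |z|^{-d}\nu(|z|)^{-1}$ of Lemma \ref{lem: truncatedFractionLaplaceInducesTruncatedTranslationRegularity}, which is not implied by \eqref{muassumption2}. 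The paper avoids this by using a different approximation in the general case: $\L_n=\L-\tfrac1n(-\Delta)^{\frac{\alpha}{2}}$, whose symbols satisfy $m_n\ge m$ uniformly, so the uniform coercivity \eqref{eq: criterionOnMeasureForCompactnessXX} holds and Lemma \ref{lem:GenL2SpTrRes} delivers the spatial translation estimate directly from the (untruncated) energy bound; existence of the $u_n$ for these viscous approximations is in turn obtained by truncating only the fractional Laplacian part, where the density lower bound does hold. You should replace your truncation scheme by this two-stage approximation, or restrict your argument to measures satisfying the density lower bound.
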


The proof in \cite{AlEnJaMa23b} uses a series of approximations of the nonlocal operators, entropy process solutions, nonlinear weak-$\star$ compactness techniques, and strong $L^2$ compactness inherited from the energy estimate and novel arguments to control time-translations. We also show that \eqref{muassumption2} is sufficient and necessary for $\L$-energy estimates to induce strong spatial compactness in $L^2$.

\begin{remark}\leavevmode 
\begin{enumerate}[{\rm (a)}]
\item We extend the uniqueness result Theorem \ref{thm: uniquenessEntropySolutions} to entropy-process solutions in \cite{AlEnJaMa23b}.
\item We have to work in $L^\infty$, and not in $C([0,T];L^1)\cap L^\infty$, since the $L^1$-contraction in Theorem \ref{thm: uniquenessEntropySolutions} is not strong enough to control space-translations of solutions in term of space-translations of the data. Our existence proof is therefore based on weak compactness arguments that does not preserve the $C([0,T];L^1(\Omega))$-regularity of approximate solutions. 
\end{enumerate}
\end{remark}




\section{Preliminaries on the Lévy operator}
\label{sec:Preliminaries}

This section is devoted to proving various identities and control estimates for the operators $\L$ and $\B$. We will here say that the expressions $\L[\phi]$ and $\B[\psi,\varphi]$ are well-defined in $L^1(\R^d)$ or $L^1_{\loc}(\R^d)$ when the limits $\lim_{r\searrow 0}\L^{\geq r}[\phi]$ and $\lim_{r\searrow 0}\B^{\geq r}[\psi,\varphi]$ exist in $L^1(\R^d)$ or $L^1_{\loc}(\R^d)$ respectively.

\subsection{Identities for $\L$ and $\B$}
We begin by establishing a connection between $\L$ and $\B$ and follow up with a product rule for $\B$. These results resemble well-known identities for $\Delta$ and $\nabla$, and the proofs boil down to truncations of the operators $\L$ and $\B$, interchanging integrals, and change of variables. To ensure that Fubini's theorem can be applied, some `absolute integrabilty' of the relevant components is required.  For this purpose an assumption of compact support is included in the propositions (though various other assumptions would suffice) as we shall frequently encounter compactly supported functions. 

\begin{proposition} [Integration by parts formulas]\label{prop: integrationByPartsFormula} 
Assume \eqref{muassumption}, $\phi,\psi\in L^\infty(\R^d)$, and that $\phi$ or $\psi$ has compact support. If both $\L[\phi]$ and $\B[\phi,\psi]$ are well-defined in $L^1(\R^d)$, then
\begin{align}\label{eq: fromLevyToBilinearForm}
    \int_{\R^d}\L[\phi]\psi\dd x = -\int_{\R^d}\B[\phi,\psi]\dd x.
\end{align}
If both  $\L[\phi]$ and $\L[\psi]$ are well-defined in $L^1(\R^d)$, then
\begin{align}\label{eq: fromLevyToLevy}
    \int_{\R^d}\L[\phi]\psi\dd x = \int_{\R^d}\phi\L[\psi]\dd x.
\end{align}
\end{proposition}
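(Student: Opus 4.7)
My plan is to prove both identities first at the truncated level $r > 0$, where all integrals are absolutely convergent, and then pass to the limit $r \to 0$. The key enabling fact is that \eqref{muassumption} gives $\mu(\{|z| \geq r\}) < \infty$ for every $r > 0$, and combined with $\phi, \psi \in L^\infty(\R^d)$ with compact support (hence also in $L^1$), the double integrals defining $\int \L^{\geq r}[\phi]\psi \dd x$ and $\int \B^{\geq r}[\phi,\psi]\dd x$ are absolutely convergent against the product measure $\dd x \dd \mu$. This makes Fubini's theorem available without hesitation.

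For the truncated version of \eqref{eq: fromLevyToLevy}, I would swap integrals and rewrite $\int_{\R^d}(\phi(x+z)-\phi(x))\psi(x)\dd x$ via the translation $x \mapsto x - z$ in the first summand as $\int_{\R^d}\phi(x)(\psi(x-z) - \psi(x))\dd x$. Reflecting $z \mapsto -z$ in the outer $\mu$-integral, permitted by the symmetry of $\mu$ in \eqref{muassumption}, converts this into $\int_{\R^d}\phi(x)(\psi(x+z)-\psi(x))\dd x$. Swapping integrals back yields $\int_{\R^d}\phi\L^{\geq r}[\psi]\dd x$. For the truncated version of \eqref{eq: fromLevyToBilinearForm}, I would expand the product
\[
(\phi(x+z)-\phi(x))(\psi(x+z)-\psi(x)) = \phi(x+z)\psi(x+z) - \phi(x+z)\psi(x) - \phi(x)\psi(x+z) + \phi(x)\psi(x),
\]
integrate in $x$, use translation invariance to replace $\int \phi(x+z)\psi(x+z)\dd x$ by $\int \phi(x)\psi(x)\dd x$, and then recognize the remainder as $-\L^{\geq r}[\phi]\psi - \phi\L^{\geq r}[\psi]$ after integration against $\dd\mu(z)$. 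The truncated version of \eqref{eq: fromLevyToLevy} collapses this to $-2\L^{\geq r}[\phi]\psi$; dividing by $2$ gives the truncated \eqref{eq: fromLevyToBilinearForm}.

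Finally I would pass to the limit $r \to 0$. By hypothesis, $\L^{\geq r}[\phi] \to \L[\phi]$ in $L^1(\R^d)$, so pairing with the uniformly bounded $\psi$ sends $\int \L^{\geq r}[\phi]\psi\dd x$ to $\int \L[\phi]\psi \dd x$; the right-hand sides of the truncated identities converge by the corresponding well-definedness assumption ($\B[\phi,\psi]\in L^1$ for \eqref{eq: fromLevyToBilinearForm} and $\L[\psi]\in L^1$ paired with bounded $\phi$ for \eqref{eq: fromLevyToLevy}). I do not expect any real obstacle: the only subtlety is bookkeeping which hypothesis is used at which step — in particular, that the symmetry of $\mu$ is essential for \eqref{eq: fromLevyToLevy}, after which \eqref{eq: fromLevyToBilinearForm} follows almost algebraically — since the compact support of the test functions trivializes absolute integrability at the truncated level.
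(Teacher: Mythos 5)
Your proof is correct and follows essentially the same route as the paper's: truncate at $|z|\geq r$, apply Fubini (justified by compact support, boundedness, and $\mu(\{|z|\geq r\})<\infty$), use the substitutions $x\mapsto x-z$ and $z\mapsto -z$ together with the symmetry of $\mu$, and pass to the limit $r\to0$ via the well-definedness hypotheses. The only cosmetic difference is that you obtain the truncated \eqref{eq: fromLevyToBilinearForm} by expanding the bilinear form and invoking the already-proved truncated \eqref{eq: fromLevyToLevy}, whereas the paper averages the two Fubini representations of $\int_{\R^d}\L^{\geq r}[\phi]\psi\dd x$ directly; the two manipulations are algebraically equivalent.
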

\begin{proof}
We prove only \eqref{eq: fromLevyToBilinearForm} as the proof of \eqref{eq: fromLevyToLevy} is similar. Fix $r>0$, and note that $(z,x)\mapsto \big(\phi(x+z)-\phi(x)\big)\psi(x)$ is absolutely integrable on the set $\{(z,x)\in\R^d\times\R^d:|z|\geq r\}$ with respect to $\dd\mu(z)\dd x$ (see the proof of Proposition \ref{prop: BCompatiblePairs} for similar expressions treated in detail).We may then use Fubini's theorem to compute 
\begin{align*}
   \int_{\R^d} \L^{\geq r}[\phi]\psi\dd x = &\, \int_{\R^d}\int_{
   |z|\geq r} \big(\phi(x+z)-\phi(x)\big)\psi(x)\dd\mu(z)\dd x\\
   =&\,\int_{
   |z|\geq r} \int_{\R^d}\big(\phi(x)-\phi(x-z)\big)\psi(x-z)\dd x\dd\mu(z)\\
     =&\, -\int_{\R^d}\int_{
   |z|\geq r} \big(\phi(x+z)-\phi(x)\big)\psi(x+z)\dd\mu(z)\dd x,
\end{align*}
where we made the substitutions $x\mapsto x-z$ and then $z\mapsto -z$ exploiting the symmetry of $\mu$.
Thus
\begin{align*}
   \int_{\R^d} \L^{\geq r}[\phi]\psi\dd x = &\, \frac{1}{2}\int_{\R^d}\int_{
   |z|\geq r} \big(\phi(x+z)-\phi(x)\big)\big(\psi(x)-\psi(x+z)\big)\dd\mu(z)\dd x= -\int_{\R^d}\B^{\geq r}[\phi,\psi]\dd x.
\end{align*}
The result follows by letting $r\to 0$.
\end{proof}
\begin{remark}
If $\psi$ in the previous proposition has compact support, the result holds true even when $\L[\phi]$ is merely well-defined in the $L^1_{\loc}$-sense; the proof is identical.
\end{remark}

The next formula is a nonlocal analogue of the product rule $\nabla\phi\cdot \nabla(\psi\varphi)=(\nabla\phi\cdot \nabla\psi)\varphi + (\nabla\phi\cdot \nabla\varphi)\psi$. 

\begin{proposition}[Product rule]\label{prop: productRuleForB}
Assume \eqref{muassumption}, $\phi,\psi,\varphi\in L^\infty(\R^d)$, and either $\phi$, $\psi$ or $\varphi$, has compact support. Then, if $\B[\phi,\psi\varphi]$, $\B[\phi,\psi]$, and $\B[\phi,\varphi]$, 
are well-defined in $L^1(\R^d)$, we have
\begin{align}\label{eq: theProductRuleOfB}
\int_{\R^d}\B[\phi,\psi\varphi]\dd x = \int_{\R^d}\B[\phi,\psi]\varphi\dd x + \int_{\R^d}\B[\phi,\varphi]\psi\dd x.
\end{align}
\end{proposition}

\begin{proof}
Fix $r>0$. Similar to the previous proof, the compact support of one of $\phi,\psi,\varphi$ will guarantee that the following integrands are absolutely integrable with respect to $\dd\mu(z)\dd x$ on the given domain. By Fubini's theorem we compute
\begin{align*}
     \int_{\R^d}\big(\B^{\geq r}[\phi,\psi\varphi] - \B^{\geq r}[\phi,\psi]\varphi\big) \dd x
     = &\, \frac{1}{2}\int_{\R^d}\int_{|z|\geq r} \big(\phi(x+z)-\phi(x)\big)\big(\varphi(x+z)-\varphi(x)\big)\psi(x+z) \dd \mu(z)\dd x\\
     = &\, \frac{1}{2}\int_{|z|\geq r}\int_{\R^d} \big(\phi(x)-\phi(x-z)\big)\big(\varphi(x)-\varphi(x-z)\big)\psi(x)\dd x \dd \mu(z)\\
     = &\,\frac{1}{2}\int_{\R^d}\int_{|z|\geq r} \big(\phi(x+z)-\phi(x)\big)\big(\varphi(x+z)-\varphi(x)\big)\psi(x) \dd \mu(z)\dd x\\
     =&\,\int_{\R^d} \B^{\geq r}[\phi,\varphi]\psi \dd x,
\end{align*}
where we made the substitution $x\mapsto x-z$ followed by $z\mapsto -z$ and exploited the symmetry of $\dd\mu(z)$. The result follows by letting $r\to 0$.
\end{proof}

\subsection{Bounds for $\L$ and $\B$}\label{subsec:bnds}
We here provide bounds and inequalities that will be used to make sense of the expression $\L[\phi]$ and $\B[\psi,\varphi]$ in non-trivial situations. We begin with a standard estimate for smooth functions.

\begin{lemma}\label{lem: L^pBoundnessOfLvWhenVIsC2AndCompactSpport}
Let $p\in[1,\infty]$, $\phi\in C^2_c(\R^d)$, and let $D^2\phi(x)$ denote the Hessian matrix of $\phi$ at $x$. Then
\begin{align}\label{eq: L^pBoundnessOfLvWhenVIsC2AndCompactSpport}
    \norm{\L[\phi]}{L^p(\R^d)}\leq&\, \Big[2\norm{\phi}{L^p(\R^d)} + \tfrac{1}{2}\norm{D^2\phi}{L^p(\R^d)}\Big]\int_{\R^d}\Big(|z|^2\wedge 1\Big)\dd\mu(z),
\end{align}
where $\norm{D^2\phi}{L^p(\R^d)}$ denotes the $L^p$ norm of $x\mapsto |D^2\phi(x)|$, with $|D^2\phi|$ being the spectral radius of $D^2\phi$. 
\end{lemma}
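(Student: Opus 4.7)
The plan is to split the singular integral at scale $|z|=1$ and estimate the two pieces with different tools, exploiting the symmetry of $\mu$ crucially for the small-jump part.

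First I would split $\L[\phi] = \L^{<1}[\phi] + \L^{\geq 1}[\phi]$. For the large-jump term, since $\phi \in L^p(\R^d)$, I can bound $\|\phi(\cdot+z) - \phi(\cdot)\|_{L^p(\R^d)} \leq 2\|\phi\|_{L^p(\R^d)}$ by translation invariance of Lebesgue measure and the triangle inequality, and then apply Minkowski's integral inequality to push the $L^p$ norm inside the $\dd\mu(z)$ integral. This yields $\|\L^{\geq 1}[\phi]\|_{L^p(\R^d)} \leq 2\|\phi\|_{L^p(\R^d)}\,\mu(\{|z|\geq 1\})$.

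For the small-jump term, I would first use symmetry of $\mu$ (via the change of variables $z \mapsto -z$) to rewrite
\[
\L^{<1}[\phi](x) = \tfrac{1}{2}\int_{|z|<1} \bigl(\phi(x+z) + \phi(x-z) - 2\phi(x)\bigr)\dd\mu(z).
\]
This symmetrization is essential because it produces a second-order difference, which combined with Taylor's theorem with integral remainder gives
\[
|\phi(x+z) + \phi(x-z) - 2\phi(x)| \leq |z|^2 \int_0^1 (1-s)\bigl(|D^2\phi(x+sz)| + |D^2\phi(x-sz)|\bigr)\dd s.
\]
Taking the $L^p$ norm in $x$ and applying Minkowski's integral inequality (twice, in $\dd s$ and $\dd\mu(z)$), together with translation invariance so that $\|D^2\phi(\cdot \pm sz)\|_{L^p} = \|D^2\phi\|_{L^p}$, produces $\|\L^{<1}[\phi]\|_{L^p(\R^d)} \leq \tfrac{1}{2}\|D^2\phi\|_{L^p(\R^d)}\int_{|z|<1}|z|^2\dd\mu(z)$.

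Combining the two bounds and using $\mathbf{1}_{|z|<1}|z|^2 + \mathbf{1}_{|z|\geq 1} = |z|^2\wedge 1$ (or simply bounding each piece by $\int(|z|^2\wedge 1)\dd\mu$) gives \eqref{eq: L^pBoundnessOfLvWhenVIsC2AndCompactSpport}. There is no real obstacle here: the only subtlety is remembering to symmetrize the small-jump integral before Taylor-expanding, since a first-order expansion would only give $|z|$ and the integral $\int_{|z|<1}|z|\dd\mu(z)$ is not controlled by the Lévy condition \eqref{muassumption}. The principal-value definition of $\L$ in \eqref{deflevy} is harmless because $\phi \in C^2_c$ guarantees that $\L^{\geq r}[\phi]\to \L[\phi]$ pointwise (and in $L^p$) as $r\to 0$, so one can carry the estimate through at the truncated level and pass to the limit.
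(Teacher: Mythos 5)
Your proof is correct: the split at $|z|=1$, the Minkowski-plus-translation-invariance bound for the large jumps, and the symmetrization followed by a second-order Taylor expansion with integral remainder for the small jumps (using $|z^{T}D^2\phi\, z|\le |D^2\phi|\,|z|^2$ and $\int_0^1(1-s)\dd s=\tfrac12$) reproduce exactly the stated constants. The paper does not prove the lemma itself but defers to \cite{EJ21}, and your argument is the same standard one used there, so there is nothing to add.
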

Up to the choice of constants, this result is proved e.g. in \cite{EJ21}, see Lemma 2.1 and Remark 2.2.

When $\phi\in L^p(\R^d)$ for some $p\in[1,\infty]$, the previous lemma guarantees that $\L[\phi]$ makes sense as a distribution if we canonically define $\langle \L [\phi],\varphi\rangle\coloneqq \langle \phi,\L [\varphi]\rangle$ for all $\varphi\in C^2_c(\R)$. Moreover, it follows that $\L^{\geq r}[\phi]\to \L [\phi]$ in the sense of distributions as $r\to0$. However, a distributional notion of $\L[\phi]$ will often be too weak for our purpose, and so the next proposition is useful. 

\begin{proposition}[Lévy operator on compositions]\label{prop: L1ControlOfLevyOfCompositions}
Assume $h:\R\to\R$ admits a weak derivative of bounded variation and $\phi\in L^\infty(\R^d)$ has compact support. If $\L[\phi]$ is well-defined in $L^1(\R^d)$, then so is $\L[h(\phi)]$, and moreover we have 
\begin{align}\label{eq: L1ControlOfLevyOfCompositions}
     \norm{\L[h(\phi)]}{L^1(\R^d)}\leq\Big[\norm{h'}{L^\infty(\R)} +|h'|_{TV(\R)}\Big]\norm{\L [\phi]}{L^1(\R^d)}.
\end{align}
\end{proposition}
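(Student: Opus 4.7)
The plan is to reduce the bound to an exact pointwise representation of $h(a)-h(b)$ that makes the $|h'|_{TV}$-contribution manifest as an integral against the signed measure $dh''$, and then to exploit a non-negativity that renders the subsequent $L^1$-estimate lossless. Writing $dh''$ for the derivative measure of $h'$, with $|dh''|(\R)=|h'|_{TV}$, and noting that $h'(-\infty)$ exists whenever $|h'|_{TV}<\infty$, I first establish by integration of $h'(t)=h'(-\infty)+\int\mathbbm{1}_{\tau\le t}\,dh''(\tau)$ and Fubini the pointwise identity
\[
h(a)-h(b)=h'(b)(a-b)+\int_{\R}K(\tau;a,b)\,dh''(\tau),\qquad K(\tau;a,b):=(a-\tau)_+-(b-\tau)_+-(a-b)\mathbbm{1}_{\tau\le b},
\]
valid for all $a,b\in\R$. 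A short case analysis yields $K(\tau;a,b)\geq 0$ always: $K$ vanishes unless $\tau$ lies strictly between $a$ and $b$, and in that case equals $a-\tau>0$ (if $b<\tau\le a$) or $\tau-a>0$ (if $a<\tau\le b$).

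Setting $a=\phi(x+z)$, $b=\phi(x)$ and integrating against $\mu$ restricted to $\{|z|\ge r\}$ then produces
\[
\L^{\geq r}[h(\phi)](x)=h'(\phi(x))\,\L^{\geq r}[\phi](x)+R_r(x),\qquad R_r(x)=\int_\R I_{\tau,r}(x)\,dh''(\tau),
\]
where the remainder density
\[
I_{\tau,r}(x):=\L^{\geq r}[(\phi-\tau)_+](x)-\mathbbm{1}_{\phi(x)\geq\tau}\,\L^{\geq r}[\phi](x)=\int_{|z|\ge r}K(\tau;\phi(x+z),\phi(x))\,d\mu(z)
\]
is non-negative by Step~1. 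The first summand is bounded in $L^1$ by $\|h'\|_\infty\|\L^{\geq r}[\phi]\|_{L^1}$ trivially. The crucial feature $I_{\tau,r}\ge 0$ makes the triangle inequality against $dh''$ lossless, giving $|R_r(x)|\le\int I_{\tau,r}(x)\,d|h''|(\tau)$ and, by Tonelli, $\|R_r\|_{L^1}\leq\int\|I_{\tau,r}\|_{L^1}\,d|h''|(\tau)$.

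To compute $\|I_{\tau,r}\|_{L^1}$ exactly I use that $(\phi-\tau)_+$ is the sum of a compactly supported bounded function and the constant $\tau_+$ (on which $\L^{\geq r}$ vanishes), so Proposition~\ref{prop: integrationByPartsFormula} applied with $\psi\equiv 1$ gives $\int\L^{\geq r}[(\phi-\tau)_+]\,dx=0$. Combined with $I_{\tau,r}\ge 0$ this yields $\|I_{\tau,r}\|_{L^1}=-\int\mathbbm{1}_{\phi\geq\tau}\,\L^{\geq r}[\phi]\,dx$. Interchanging the $\tau$- and $x$-integrations (Tonelli, since $|h''|$ is a finite positive measure) collapses the double integral to $\int\|I_{\tau,r}\|_{L^1}\,d|h''|(\tau)=-\int G(x)\,\L^{\geq r}[\phi](x)\,dx$, where $G(x):=|h''|((-\infty,\phi(x)])\in[0,|h'|_{TV}]$; hence $\|R_r\|_{L^1}\leq |h'|_{TV}\|\L^{\geq r}[\phi]\|_{L^1}$. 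Summing the two contributions gives the desired estimate for $\L^{\geq r}$. Applying the same argument to the annulus-truncated operator $\L^{r_2>\cdots\geq r_1}$ (whose underlying Lévy measure is finite) shows $\{\L^{\geq r}[h(\phi)]\}_{r>0}$ is Cauchy in $L^1$ as $r\to 0$, because $\{\L^{\geq r}[\phi]\}$ is, so $\L[h(\phi)]$ is well-defined in $L^1$ and the inequality passes to the limit.

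The main obstacle is the first step: finding a decomposition whose remainder kernel $K$ is pointwise non-negative. With the simpler alternative identity $h(a)-h(b)=h'(-\infty)(a-b)+\int[(a-\tau)_+-(b-\tau)_+]\,dh''(\tau)$ one would be forced to control $\|\L[(\phi-\tau)_+]\|_{L^1}$ by $\|\L[\phi]\|_{L^1}$ for each $\tau$ separately, for which there seems to be no direct pointwise argument; the corrective term $-(a-b)\mathbbm{1}_{\tau\le b}$ absorbed into $K$ is exactly what restores non-negativity and makes the final application of Fubini tight.
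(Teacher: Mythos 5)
Your proof is correct, and it reaches the paper's bound through a different decomposition than the one used there. The paper's argument also subtracts $h'(\phi)\L[\phi]$, but controls the remainder by a pointwise \emph{inequality}: it takes the Jordan decomposition $h'=\b_+-\b_-$, forms a single monotone majorant $\b$ with $2\norm{\b}{L^\infty(\R)}=|h'|_{TV(\R)}$, bounds the Taylor remainder by $2\B[\b(\phi),\phi]$ using $|h'(v+w)-h'(v)|\le\sgn(w)(\b(v+w)-\b(v))$ together with the monotonicity trick $(\b(\phi+s\d)-\b(\phi))\d\le(\b(\phi+\d)-\b(\phi))\d$ for $s\in[0,1]$, and then integrates by parts to land on $\norm{\L[\phi]}{L^1(\R^d)}$. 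You instead represent the remainder \emph{exactly}, disintegrating over the derivative measure $\dd h''$ with your nonnegative kernel $K$, so that it becomes a superposition of the quantities $I_{\tau,r}=\L^{\geq r}[(\phi-\tau)_+]-\mathbbm{1}_{\phi\ge\tau}\L^{\geq r}[\phi]\ge 0$, whose mass is computable because $\int_{\R^d}\L^{\geq r}[(\phi-\tau)_+]\dd x=0$. The two arguments are close cousins: your $G(x)=|h''|((-\infty,\phi(x)])$ is, up to an additive constant and normalization, the paper's $\b(\phi(x))$, and both proofs hinge on the same two mechanisms — a nonnegativity that makes the triangle inequality against the jump measure lossless, and mass conservation/self-adjointness of $\L^{\geq r}$ to transfer everything onto $\norm{\L^{\geq r}[\phi]}{L^1(\R^d)}$. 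Your layer-cake route replaces the slightly delicate monotone-majorant inequality by an identity, which is arguably more transparent; the paper's version avoids the $\tau$-disintegration and the attendant Fubini bookkeeping. The passage from the truncated bound to the general case (annulus operators, Cauchy in $L^1$, limit $r\to0$) is exactly the paper's. Two trivial points: the constant that $(\phi-\tau)_+$ equals off $\supp\phi$ is $(-\tau)_+$, not $\tau_+$ (harmless, since $\L^{\geq r}$ annihilates constants), and the interchange of $\dd\mu(z)$ with $\dd h''(\tau)$ when forming $R_r$ deserves the one-line justification $\int_{\R}|K(\tau;a,b)|\,\dd|h''|(\tau)\le|h'|_{TV(\R)}|a-b|$.
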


\begin{remark}
Note that $|h'|_{TV(\R)}<\infty$ implies $\norm{h'}{L^\infty(\R)}<\infty$ so that $h$ is actually Lipschitz.
\end{remark}

\begin{proof}[Proof of Proposition \ref{prop: L1ControlOfLevyOfCompositions}]
As $h'$ is of bounded variation, associate it with its left-continuous representation (so that the following pointwise estimates are unambiguous).
We consider first the case when the Lévy measure is finite $\mu(\R^d)<\infty$, so that $\L$ is a zero order operator; in particular we have $\L[\psi]=\mu\ast\psi - \mu(\R^d)\psi$ for any $\psi\in L^\infty(\R^d)$. Due to its bounded variation, $h'$ admits an essentially unique representation $h'=\b_+ - \b_-$ for two non-decreasing functions $\b_+,\b_-$ satisfying 
\begin{align*}
    |h'|_{TV(\R)}=|\b_+|_{TV(\R)}+|\b_-|_{TV(\R)} = \b_+(\infty)+\b_-(\infty)-\b_+(-\infty)-\b_-(-\infty).
\end{align*}
Introducing then the monotone function $\b(v)\coloneqq \b_+(v) + \b_-(v) - \tfrac{1}{2}|h'|_{TV(\R)}$, which has been shifted to satisfy $2\norm{\b}{L^\infty(\R)}=|h'|_{TV(\R)}$, we observe that we have the inequality
\begin{align}\label{eq: inequalityForMonotonizedH}
    |h'(v+w)-h'(v)| \leq \sgn(w)\big(\b(v+w) - \b(v)\big),
\end{align}
for any $v,w\in\R$.

In the next computation, let $\d\coloneqq \phi(x+z)-\phi(x)$ for notational simplicity. Subtracting $h'(\phi)\L [\phi]$ from $\L[h(\phi)]$ we find for a.e.~$x\in\R^d$
\begin{equation}\label{eq: inequalityForSubtractedHDerivative}
    \begin{split}
        \Big|\L[h(\phi)]-h'(\phi)\L [\phi]\Big|(x)
    =&\, \bigg|\int_{\R^d}h\big(\phi(x) + \delta\big)-h(\phi(x)) - h'(\phi(x))\delta\dd \mu(z)\bigg|\\
    \leq &\, \int_{\R^d}\int_{0}^1\big|\big(h'(\phi(x) +s\delta )-h'(\phi(x))\big)\delta\big|\dd s \dd \mu(z)\\
    \leq &\, \int_{\R^d}\int_{0}^1\big(\b(\phi(x) +s\delta )-\b(\phi(x)\big)\delta\dd s \dd \mu(z),
    \end{split}
\end{equation}
where we used \eqref{eq: inequalityForMonotonizedH}. By monotonicity of $\b$, we further have $(\b(\phi + s\d)-\b(\phi))\d\leq (\b(\phi + \d)-\b(\phi))\d$ whenever $s\in[0,1]$. And so by \eqref{eq: inequalityForSubtractedHDerivative} we infer 
\begin{align}\label{eq: almostAtLevyControlOfCompositions}
   \norm{\L[h(\phi)] - h'(\phi)\L[\phi]}{L^1(\R^d)} \leq \int_{\R^d}2\B[\beta(\phi),\phi]\dd x \leq 2\norm{\b}{L^\infty(\R^d)}\norm{\L[\phi]}{L^1(\R^d)},
\end{align}
where we shifted $\B$ over to $\L$ using Proposition \ref{prop: integrationByPartsFormula}. Thus, \eqref{eq: L1ControlOfLevyOfCompositions} follows for zero order $\L$ when combining the triangle inequality with \eqref{eq: almostAtLevyControlOfCompositions} and the bound $2\norm{\b}{L^\infty(\R^d)} = |h'|_{TV(\R)}$.

For general $\L$, we first note that $\L[h(\phi)]$ is necessarily well-defined in $L^1(\R^d)$: Let $r'>r>0$ and note that
\begin{align*}
    \limsup_{r',r\to 0}\norm{\L^{\geq r'}[h(\phi)]-\L^{\geq r}[h(\phi)]}{L^1(\R^d)} =&\,  \limsup_{r',r\to 0}\norm{\L^{r'>\cdots \geq r}[h(\phi)]}{L^1(\R^d)}\\
    \lesssim &\,  \limsup_{r',r\to 0}\norm{\L^{r'>\cdots \geq r}[\phi]}{L^1(\R^d)}=0,
\end{align*}
where we used \eqref{eq: L1ControlOfLevyOfCompositions} on the zero order Lévy operator $\L^{r'>\cdots \geq r}$ and the fact that $\L[\phi]$ is well-defined in $L^1(\R^d)$. The inequality \eqref{eq: L1ControlOfLevyOfCompositions} then follows for $\L[h(\phi)]$ by using the corresponding one for $\L^{\geq r}[h(\phi)]$ and letting $r\to 0$.
\end{proof}

We combine the previous two results to compute a local $L^1$ bound for $\L[h(\phi)]$ with $h$ as above and with $\phi$ bounded and locally $C^2$.

\begin{corollary}\label{cor: L1LocControlOfLevyOnComposition}
Let $h$ be as in Proposition \ref{prop: L1ControlOfLevyOfCompositions}, and let $\phi\in C^2_b(\R^d)$. Then for any compact $U\subset \R^d$ and any smooth cut-off function $\psi\in C_c^\infty(\R^d)$ satisfying $\psi(x)=1$ whenever $\mathrm{dist}(x,U)\leq 1$ we have 
\begin{equation}\label{eq: explicitBoundOfLevyOnCompositionOnBoundedSets}
\begin{split}
    \norm{\L[h(\phi)]}{L^1(U)}\leq&\, \Big[\norm{h'}{L^\infty(\R)} +|h'|_{TV(\R)}\Big]\times \\
    &\,  \times\Big[ 4\norm{\phi}{L^\infty(\R^d)}\norm{\psi}{L^1(\R^d)} +\tfrac{1}{2}\norm{D^2(\phi\psi)}{L^1(\R^d)}\Big]\int_{\R^d}\Big(|z|^2\wedge 1\Big)\dd\mu(z),
\end{split}
\end{equation}
where $\norm{D^2(\phi\psi)}{L^1(\R^d)}$ is as in Lemma \ref{lem: L^pBoundnessOfLvWhenVIsC2AndCompactSpport}. Thus $\L[h(\phi)]\in L^1_{\mathrm{loc}}(\R^d)$, and in particular, we have
\begin{align*}
    \lim_{r\to 0} \L^{\geq r}[h(\phi)] = \L[h(\phi)]\qquad \text{in }L^1_{\mathrm{loc}}(\R^d).
\end{align*}
\end{corollary}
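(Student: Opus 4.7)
The plan is to localize the problem using the cut-off $\psi$, so that we can apply Proposition \ref{prop: L1ControlOfLevyOfCompositions} to a compactly supported approximation of $\phi$ and handle the error term directly by elementary nonlocal estimates.

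Concretely, first I would set $\phi_1 \coloneqq \phi\psi \in C^2_c(\R^d)$, $\tilde h \coloneqq h - h(0)$ (so $\tilde h(0) = 0$, $\tilde h' = h'$, and $|\tilde h'|_{TV} = |h'|_{TV}$), and write the telescoping decomposition
\[
h(\phi) \;=\; \tilde h(\phi_1) + g + h(0), \qquad g \coloneqq h(\phi) - h(\phi_1).
\]
Since $\L$ annihilates constants inside the singular integral, we have $\L[h(\phi)] = \L[\tilde h(\phi_1)] + \L[g]$ whenever the right-hand side is well-defined. For the main term, Lemma \ref{lem: L^pBoundnessOfLvWhenVIsC2AndCompactSpport} gives $\L[\phi_1] \in L^1(\R^d)$ with norm controlled by $2\|\phi_1\|_{L^1} + \tfrac12\|D^2\phi_1\|_{L^1}$ times $\int(|z|^2\wedge 1)\dd\mu$. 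Because $\tilde h(\phi_1)$ is bounded and compactly supported (as $\tilde h(0)=0$ and $\phi_1$ is compactly supported), Proposition \ref{prop: L1ControlOfLevyOfCompositions} then yields $\L[\tilde h(\phi_1)]\in L^1(\R^d)$ with the desired $(\|h'\|_{L^\infty} + |h'|_{TV})$-factor multiplying that bound.

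The remainder term $g$ is the point where the cut-off radius matters. By the assumption on $\psi$, we have $\psi \equiv 1$ on the set $\{x : \mathrm{dist}(x,U)\le 1\}$, so that $g$ vanishes identically there. In particular $g(x) = 0$ for every $x\in U$, and for such $x$ and any $r\in(0,1)$,
\[
\L^{\ge r}[g](x) = \int_{|z|\ge r}g(x+z)\dd\mu(z) = \int_{|z|>1}g(x+z)\dd\mu(z),
\]
since $g(x+z)=0$ as soon as $|z|\le 1$ (then $x+z$ lies in the $1$-neighborhood of $U$). The right-hand side is independent of $r$, and bounded pointwise by $\|h'\|_{L^\infty}\|\phi\|_{L^\infty}\int_{|z|>1}\dd\mu(z) \le \|h'\|_{L^\infty}\|\phi\|_{L^\infty}\int(|z|^2\wedge 1)\dd\mu(z)$, using that $|g|\le \|h'\|_{L^\infty}|\phi(1-\psi)|$ and that $|z|^2\wedge 1 = 1$ on $\{|z|>1\}$. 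Integrating over $U$ supplies an additional factor $|U|$.

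Combining the two contributions through the triangle inequality gives precisely \eqref{eq: explicitBoundOfLevyOnCompositionOnBoundedSets} (absorbing the $\|h'\|_{L^\infty}\|\phi\|_{L^\infty}|U|$ term into the larger $[\|h'\|_{L^\infty}+|h'|_{TV}]\cdot 2\|\phi\|_{L^\infty}|U|$ prefactor). For the convergence claim, since $\L^{\ge r}[\tilde h(\phi_1)]\to \L[\tilde h(\phi_1)]$ in $L^1(\R^d)$ by Proposition \ref{prop: L1ControlOfLevyOfCompositions}, and $\L^{\ge r}[g]$ is constant in $r$ on $U$ (for $r<1$) and already equals its limit $\L[g]$ there, we conclude $\L^{\ge r}[h(\phi)] \to \L[h(\phi)]$ in $L^1(U)$; since $U$ was arbitrary this is exactly $L^1_{\mathrm{loc}}(\R^d)$ convergence. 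The main obstacle in this plan is really a bookkeeping one: one must make sure that the cut-off-based splitting is done \emph{before} invoking Proposition \ref{prop: L1ControlOfLevyOfCompositions} (which requires compact support) and that the $r$-uniform bound on the remainder exploits the $1$-neighborhood condition on $\psi$, so the nonlocal interaction between $U$ and the uncontrolled region $\{\psi<1\}$ is cleanly confined to the finite-mass tail $\{|z|>1\}$ of $\mu$.
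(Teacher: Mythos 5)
Your proof is correct and follows essentially the same route as the paper's: both reduce the main estimate to Proposition \ref{prop: L1ControlOfLevyOfCompositions} and Lemma \ref{lem: L^pBoundnessOfLvWhenVIsC2AndCompactSpport} applied to the compactly supported function $\phi\psi$, and both confine the error committed by the cut-off to the finite-mass tail $\{|z|\geq 1\}$ of $\mu$ via the $1$-neighborhood condition on $\psi$. The only (cosmetic) difference is that the paper splits the operator, $\L=\L^{<1}+\L^{\geq 1}$, using $\L^{<1}[h(\phi)]=\L^{<1}[h(\phi\psi)]$ on $U$, whereas you split the function, $h(\phi)=\tilde h(\phi\psi)+g+h(0)$, using that $g$ vanishes on the $1$-neighborhood of $U$; the resulting constants both fit within \eqref{eq: explicitBoundOfLevyOnCompositionOnBoundedSets}.
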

\begin{proof}
To prove \eqref{eq: explicitBoundOfLevyOnCompositionOnBoundedSets}, we decompose $\L=\L^{\geq 1}+\L^{<1}$.
As $\L^{<1}[h(\phi)]=\L^{<1}[h(\phi\psi)]$ on $U$, we find
\begin{align*}
    \norm{\L^{<1}[h(\phi)]}{L^1(U)}\leq&\, \norm{\L^{<1}[h(\phi\psi)]}{L^1(\R^d)}\\
    \leq &\,\Big[\norm{h'}{L^\infty(\R)} +|h'|_{TV(\R)}\Big]\Big[ 2\norm{\phi\psi}{L^1(\R^d)} +\tfrac{1}{2}\norm{D^2(\phi\psi)}{L^1(\R^d)}\Big]\int_{\R^d}\Big(|z|^2\wedge 1\Big)\dd\mu(z),
\end{align*}
by Proposition \ref{eq: L1ControlOfLevyOfCompositions} and Lemma \ref{lem: L^pBoundnessOfLvWhenVIsC2AndCompactSpport}. And using that $\L^{\geq 1}[h(\phi)]=\psi\L^{\geq 1}[h(\phi)]$ on $U$, we compute
\begin{align*}
    \norm{\L^{\geq 1}[h(\phi)]}{L^1(U)}\leq&\,  \norm{\psi\L^{\geq 1}[h(\phi)]}{L^1(\R^d)}\leq 2\norm{\psi}{L^1(\R^d)}\norm{h'}{L^\infty(\R)}\norm{\phi}{L^\infty(\R^d)}\mu(\{|z|\geq 1\}).
\end{align*}
Putting these estimates together (plus a few coarse estimates) give  \eqref{eq: explicitBoundOfLevyOnCompositionOnBoundedSets}. The fact that $\lim_{r\to0}\L^{\geq r}[h(\phi)]=\L[h(\phi)]$ in $L^1_{\mathrm{loc}}(\R)$ follows if we can prove $L^1$ convergence on $U$ as the latter set was arbitrary. This fact follows immediately from \eqref{eq: explicitBoundOfLevyOnCompositionOnBoundedSets} with $\L^{< r}$ as the respective Lévy operator; we get
\begin{align*}
    \norm{\L[h(\phi)]-\L^{\geq r}[h(\phi)]}{L^1(U)}=\norm{\L^{< r}[h(\phi)]}{L^1(U)}\lesssim \int_{|z|<r}\Big(|z|^2\wedge 1\Big)\dd\mu(z),
\end{align*}
which tends to zero as $r\to0$.
\end{proof}

We end the section by listing a few sufficient conditions on $\phi,\psi\in L^\infty(\R^d)$ for $\B[\phi,\psi]$ to be well-defined. Here we shall say that `the integrand of $\B[\phi,\psi]$ is absolutely integrable with respect to $\dd\mu(z)\dd x$' to mean
\begin{equation}\label{eq: integrandOfBIsAbsolutelyIntegrable}
    \frac{1}{2}\int_{\R^d\times\R^d}|\phi(x+z)-\phi(x)||\psi(x+z)-\psi(x)|\dd\mu(z)\dd x<\infty.
\end{equation}
This condition ameliorates the expression $\B[\phi,\psi]$ and in particular ensures it to be well-defined in $L^1(\R^d)$; even the pointwise definition \eqref{def:blform} is then necessarily meaningful almost everywhere. 
\begin{proposition}[List of $\B$-compatible pairs]\label{prop: BCompatiblePairs}
 Let $\phi,\psi\in L^\infty(\R^d)$.  Then, the integrand of $\B[\phi,\psi]$ is absolutely integrable with respect to $\dd\mu(z)\dd x$ if either of the following conditions are satisfied:
\begin{enumerate}[{\rm (i)}]
    \item $\phi,\psi\in H^{\L}(\R^d)$. The integral in \eqref{eq: integrandOfBIsAbsolutelyIntegrable} is then bounded by 
    \begin{equation*}
        \bigg(\int_{\R^d}\B[\phi,\phi]\, \dd x\bigg)^{\frac{1}{2}}\bigg(\int_{\R^d}\B[\psi,\psi]\, \dd x\bigg)^{\frac{1}{2}}.
    \end{equation*}
    \item $\phi\in H^{\L}(\R^d)$, $\psi$ is Lipschitz continuous, and $\phi$ or $\psi$ is supported in a compact set $E\subset \R^d$. The integral in \eqref{eq: integrandOfBIsAbsolutelyIntegrable} is then bounded by
    \begin{equation*}
         \sqrt{2|E|}\bigg(\int_{\R^d}(|z|^2\wedge 1)\,d\mu(z)\bigg)^{\frac{1}{2}}\bigg(\int_{\R^d}\B[\phi,\phi]\, \dd x\bigg)^{\frac{1}{2}}\big(L_\psi \vee 2\norm{\psi}{L^\infty}\big).
    \end{equation*}
    \item $\phi$ is of bounded variation, $\psi$ is Lipschitz continuous, and $\phi$ or $\psi$ is supported in a compact set $E\subset \R^d$. The integral in \eqref{eq: integrandOfBIsAbsolutelyIntegrable} is then bounded by 
     \begin{equation*}
        \bigg(\int_{\R^d}\big(|z|^2\wedge 1\big)\,d\mu(z)\bigg)\big(|\phi|_{TV}\vee 2|E|\norm{\phi}{L^\infty}\big)\big(L_{\psi}\vee 2\norm{\psi}{L^\infty}\big)
    \end{equation*}
    \item The Lévy  measure $\mu$ is finite, and $\phi$ or $\psi$ is supported in a compact set $E\subset \R^d$. The integral in \eqref{eq: integrandOfBIsAbsolutelyIntegrable} is then bounded by 
     \begin{equation*}
       4|E|\mu(\R^d)\norm{\phi}{L^\infty}\norm{\psi}{L^\infty}.
    \end{equation*}
\end{enumerate}
Here, $|E|$ denotes the $d$-dimensional Lebesgue measure of $E$.
\end{proposition}

\begin{proof}
\noindent(i). This follows from the definition of $H^{\L}(\R^d)$ and the Cauchy–Schwarz inequality.

\smallskip
\noindent(ii). We here have
\begin{align*}
   &\, \frac{1}{2}\int_{\R^d\times\R^d}|\phi(x+z)-\phi(x)||\psi(x+z)-\psi(x)|\dd\mu(z)\dd x\\
    \leq&\, \frac{1}{2}\int_{\R^d\times\R^d}|\phi(x+z)-\phi(x)||\psi(x+z)-\psi(x)|\big(\chi_E(x) + \chi_E(x+z)\big)\dd\mu(z)\dd x,
\end{align*}
where $\chi_{E}$ is the indicator function on $E$. By splitting the integral on the right-hand side in two, one integral featuring $\chi_E(x)$ and the other $\chi_E(x+z)$, one may perform the substitution of variable $x\mapsto x-z$ followed by $z\mapsto -z$ to see that the two parts coincide. Thus, the integral is bounded by 
\begin{align*}
   &\,\int_{\R^d\times\R^d}|\phi(x+z)-\phi(x)||\psi(x+z)-\psi(x)|\chi_E(x)\dd\mu(z)\dd x\\
   \leq &\,  \int_{\R^d\times\R^d}\Big[|\phi(x+z)-\phi(x)|\Big]\Big[\big(|z|\wedge 1\big)\chi_E(x)\Big]\dd\mu(z)\dd x\big(L_\psi \vee 2 \norm{\psi}{L^\infty}\big).
\end{align*}
The result then follows from Hölder's inequality applied to the two square brackets. 

\smallskip
\noindent(iii). We again introduce $\chi_E(x)$ into the integral and get the desired bound from the calculation 
\begin{align*}
    \int_{E}|\phi(x+z)-\phi(x)||\psi(x+z)-\psi(x)|\dd x
   \leq &\, \int_{E}|\phi(x+z)-\phi(x)|\dd x \Big(L_{\psi}|z|\wedge 2\norm{\psi}{L^\infty}\Big)\\
   \leq &\, \Big(|\phi|_{TV}|z|\wedge 2|E|\norm{\phi}{L^\infty}\Big) \Big(L|z|\wedge 2\norm{\psi}{L^\infty(\R^d)}\Big)\\
   \leq&\, \big(|z|^2\wedge 1\big)\big(|\phi|_{TV}\vee 2|E|\norm{\phi}{L^\infty}\big)\big(L_{\psi}\vee 2\norm{\psi}{L^\infty}\big).
\end{align*}

\smallskip
\noindent(iv). As before, we only need to control the integral of $|\phi(x+z)-\phi(x)||\psi(x+z)-\psi(x)|\chi_E(x)\leq 4\norm{\phi}{L^\infty}\norm{\psi}{L^\infty}\chi_E(x)$. Integrating the latter quantity gives the claimed bound.
\end{proof}

We will frequently encounter the expression $\B[\cha,\varphi]$ where $\cha$ is the characteristic function on $\Omega$ and where $\varphi$ is bounded and Lipschitz continuous. Because $\partial \Omega$ has finite Hausdorff $\mathcal{H}^{d-1}$-measure (due to its regularity), we get that $|\cha|_{TV(\R^d)}=|\partial \Omega|_{\mathcal{H}^{d-1}}<\infty.$\footnote{This identity follows from the definition of $|\cdot|_{TV(\R^d)}$ and the Gauss divergence theorem.}  By the previous proposition we conclude that $\B[\cha,\varphi]$ is well-defined in $L^1(\R^d)$. 

We conclude the section with a similar, but more specific, proposition for the double variable scenario $(x,y)\in \R^d\times\R^d$. The corresponding operators are defined in Section \ref{sec:AssumptionsConceptMain}, specifically \eqref{eq: theXPlussYOperators} and \eqref{eq: theMoreMessyXPlussYOperators}.
\begin{proposition}\label{prop: absolutelyIntegrableBInTheDoubleVariableScenario}
    Let $\phi,\psi\in L^\infty(\R^d)$ and $\varphi\in C^\infty_c(\R^d\times\R^d)$ depend on $x$, $y$, and $(x,y)$ respectively, and $\B_x[\phi,\phi]$ and $\B_y[\psi,\psi]$ be well-defined in $L^1_{\loc}(\R^d)$. Then the integrands of
    \begin{align*}
        \B_{x+y}[|\phi-\psi|,\varphi],\quad \B_{x, x+y}[|\phi-\psi|,\varphi], \quad\B_{y,x+y}[|\phi-\psi|,\varphi],\quad \B_{x,y}[|\phi-\psi|,\varphi], \quad\text{and}\quad \B_{y,x}[|\phi-\psi|,\varphi],
    \end{align*}
    are all absolutely integrable with respect to $\dd \mu(z)\dd x \dd y$ in the analogous sense of \eqref{eq: integrandOfBIsAbsolutelyIntegrable}, and the resulting integral is bounded by
    \begin{align*}
        (2|E|)^{\frac{3}{2}}\Bigg[\bigg(\int_{E}\B_x[\phi,\phi]\,\dd x\bigg)^{\frac{1}{2}}+\bigg(\int_{E}\B_y[\psi,\psi]\,dy\bigg)^{\frac{1}{2}}\Bigg]\bigg(\int_{\R^d}\big(|z|^2\wedge 1\big)\,d\mu(z)\bigg)^{\frac{1}{2}}\cb\Big(L_\varphi\vee\norm{\varphi}{L^\infty}\Big)\nc,
    \end{align*}
where $E\subset\R^d$ is any compact set such that $\supp(\varphi)\subseteq E\times E$, $|E|$ denotes its $d$-dimensional \cb Lebesgue measure\nc, and $L_{\varphi}$ is a Lipschitz constant for $\varphi$.
\end{proposition}
\begin{proof}
    We only prove the $\B_{x+y}$-case, as the others can be dealt with similarly. Arguing as in part (ii) of the proof of Proposition \ref{prop: BCompatiblePairs}, we can restrict $(x,y)$ to the compact set $E\times E$, 
    and we compute  
    \begin{align*}
        &\,\int_{E\times E}\int_{\R^d}\big||\phi(x+z)-\psi(y+z)|-|\phi(x)-\psi(y)|\big|\big|\varphi(x+z,y+z)-\varphi(x,y)\big|\dd \mu(z)\dd x\dd y\\
        \leq&\,  \int_{E\times E}\int_{\R^d}\Big(|\phi(x+z)-\phi(x)| + |\psi(y+z)-\psi(y)|\Big)\Big((\sqrt{2}L_\varphi |z|)\wedge (2 \|\varphi\|_{L^\infty})\Big)\dd \mu(z)\dd x\dd y.
    \end{align*}
    The desired bound then follows from an appropriate application of  Hölder's inequality. 
\end{proof}

\section{Properties of entropy solutions}\label{sec: propertiesOfEntropySolutions}

In this section we establish some results and a priori estimates for entropy solutions of \eqref{E}. 

\subsection{Finite energy}
We will repeatedly need that entropy solutions have finite energy.

\begin{proposition}[Energy estimate]\label{prop: finiteEnergy}
Assume \eqref{Omegaassumption}--\eqref{muassumption} and $u$ is an entropy solution of \eqref{E}.
Then
\begin{align}
&\,\int_M \B\big[b(u)-b(\bu),b(u)-b(\bu)\big] \dd x\dd t\nonumber\\
    \leq &\,\int_{\Omega} H(u_0,\bu(0))\dd x - \int_{Q} \Big[(u-\bu) \bu_t + F (u,\bu)\cdot\nabla\bu\Big]b'(\bu)\dd x\dd t \label{eq: finiteEnergy}\\
   &\,+ \int_{Q} \L[b(\bu)](b(u)-b(\bu))\dd x\dd t,\nonumber
\end{align}
where $F(u,\bu)=\sgn(u-\bu)\big(f(u)-f(\bu)\big)$ and 
$
H(u,k):=\int_k^u \big(b(\xi)-b(k)\big)\dd \xi \geq 0.$ 
 Moreover, the right-hand side of \eqref{eq: finiteEnergy} is finite.
\end{proposition}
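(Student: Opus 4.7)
My plan is to derive \eqref{eq: finiteEnergy} by a doubling-of-variables argument applied to the entropy inequalities \eqref{eq: entropyInequalityVovelleMethod}, combined with a Carrillo-type $b'$-weighted integration in an auxiliary constant that converts the Kru\v{z}kov quantities into the Bregman energy $H(u,\bu)$, and an integration by parts via Proposition \ref{prop: integrationByPartsFormula} to extract the $\B$-dissipation.

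Concretely, I would apply the semi-Kru\v{z}kov inequalities \eqref{eq: entropyInequalityVovelleMethod} (both signs) to $u(t,x)$ with constant $k=\bu^c(s,y)+h$ and test function
\[
\varphi(t,x)=\omega_\e(t-s)\rho_\eta(x-y)\xi(t),
\]
for standard mollifiers $\omega_\e,\rho_\eta$ and a time cutoff $\xi\in C_c^\infty([0,T))$, treating $(s,y)\in Q$ and $h\in\R$ as parameters. I would then integrate the inequalities over $(s,y)\in Q$ and over $h$ between $0$ and $u(t,x)-\bu^c(s,y)$ against the measure $b'(\bu^c(s,y)+h)\,dh$, using the $+$ branch for $h\geq 0$ and the $-$ branch for $h\leq 0$. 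The branch choice satisfies the admissibility condition \eqref{eq: admissibilityConditionOnConstantAndTestfunction}: since $u^c=\bu^c$ on $Q^c$, the factor $(b(u^c)-b(\bu^c(s,y)+h))^\pm\varphi$ is identically zero there for $h$ in this range. After the substitution $\sigma=\bu^c(s,y)+h$, the identities
\[
\int_{\bu}^{u} b'(\sigma)(u-\sigma)\,d\sigma=H(u,\bu),\qquad \int_{\bu}^{u} b'(\sigma)\,d\sigma=b(u)-b(\bu),
\]
(and their $u<\bu$ analogues obtained via the $-$ branch) convert the Kru\v{z}kov initial datum $(u_0-k)^\pm$ and the sign factor $\sgn^\pm(u-k)$ multiplying $\L^{\geq r}[b(u)]$ into the Bregman datum $H(u_0,\bu(0))$ and into the factor $b(u)-b(\bu)$, respectively.

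Passing $\e,\eta\to 0$, the derivatives $\partial_t\omega_\e$ and $\nabla\rho_\eta$ that hit the mollifiers transfer, via integration by parts in $(s,y)$, onto the product $(u-\bu^c-h)^\pm b'(\bu^c+h)$, producing two chain-rule contributions: one from differentiating $\sgn^\pm$ and one from differentiating $b'$ (the latter meaningful as the distributional derivative $\partial_s b'(\bu^c+h)=b''(\bu^c+h)\partial_s\bu^c$ thanks to \eqref{bassumption}). A direct computation based on the identity $\int_{\bu}^u(u-\sigma)b''(\sigma)\,d\sigma=-(u-\bu)b'(\bu)+b(u)-b(\bu)$ (and its flux counterpart) shows that after the $\sigma$-integration these two pieces combine into exactly the chain-rule terms $(u-\bu)b'(\bu)\bu_t$ and $b'(\bu)F(u,\bu)\cdot\nabla\bu$, matching \eqref{eq: finiteEnergy} once moved to the right-hand side with the appropriate sign. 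For the nonlocal $\L$-part, splitting $\L^{\geq r}[b(u)]=\L^{\geq r}[b(u)-b(\bu)]+\L^{\geq r}[b(\bu)]$ and applying \eqref{eq: fromLevyToBilinearForm} of Proposition \ref{prop: integrationByPartsFormula} to the bounded function $b(u)-b(\bu)$ (supported in $\overline{\Omega}$ by part (b) of Definition \ref{def: entropySolutionVovelleMethod}) gives
\[
\int_M (b(u)-b(\bu))\L^{\geq r}[b(u)-b(\bu)]\,dx\,dt = -\int_M \B^{\geq r}[b(u)-b(\bu),b(u)-b(\bu)]\,dx\,dt,
\]
and sending $r\to 0$ produces the $-\B$-dissipation on the left of \eqref{eq: finiteEnergy} together with the $\int_Q\L[b(\bu)](b(u)-b(\bu))$ term on the right; the residual $\L^{<r}[\varphi]$-contribution vanishes by Lemma \ref{lem: L^pBoundnessOfLvWhenVIsC2AndCompactSpport} and \eqref{muassumption}. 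Finally, taking $\xi\to\chi_{[0,T)}$ and invoking Lemma \ref{lem: timeContinuityAtZeroOfEntropySolution} for the initial trace concludes. Finiteness of the right-hand side follows from Lemma \ref{lem: maximumsPrinciple}, the smoothness of $\bu^c$ from \eqref{u^cassumption}, and the $L^1(Q)$-control $\|\L[b(\bu^c)]\|_{L^1(Q)}<\infty$ from Corollary \ref{cor: L1LocControlOfLevyOnComposition}.

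The hardest step will be carrying the $b'$-weighted $h$-integration (with the $h$-range depending on both $u(t,x)$ and $\bu^c(s,y)$) through the mollifier limits $\e,\eta\to 0$ and the limit $r\to 0$ in the nonlocal operators, and in particular interpreting the chain-rule contribution $\int(u-\bu^c-h)^\pm b''(\bu^c+h)\partial_s\bu^c\,dh$ rigorously as a distributional pairing between the measure $b''$ and smooth test functions. A secondary subtlety is the hyperbolic boundary term $L_f\int_\Gamma|\bu^c-k|^\pm\varphi$ in \eqref{eq: entropyInequalityVovelleMethod}, which should vanish after $h$-integration because $u=\bu^c$ on $\Omega^c$ forces the $h$-interval to collapse to $\{0\}$ on $\Gamma$ in the mollifier limit.
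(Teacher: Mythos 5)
Your overall strategy --- a $b'$-weighted integration of the semi-Kru\v{z}kov inequalities in the constant, combined with doubling of variables and an integration by parts to extract the $\B$-dissipation --- is the same as the paper's, which builds the entropy pairs $H^\pm,G^\pm$ by integrating $\xi\mapsto(u-\xi)^\pm b'(\xi)$ over the half-lines $(k,\infty)$, $(-\infty,k)$ and then doubles variables. However, there is a genuine gap in your choice of admissible pairs. With $k=\overline{u}^c(s,y)+h$ and $\varphi=\omega_\e(t-s)\rho_\eta(x-y)\xi(t)$, condition \eqref{eq: admissibilityConditionOnConstantAndTestfunction} requires $(b(u^c(t,x))-b(\overline{u}^c(s,y)+h))^+\varphi(t,x)=0$ for a.e.\ $(t,x)\in Q^c$. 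On the support of the mollifier, $u^c(t,x)$ can exceed $\overline{u}^c(s,y)+h$ by up to $L_{\overline{u}^c}\max(\e,\eta)$, so the condition fails for all $0\le h<L_{\overline{u}^c}\max(\e,\eta)$; your justification ``since $u^c=\overline{u}^c$ on $Q^c$'' addresses the wrong issue (the obstruction is the oscillation of $\overline{u}^c$ over the mollifier scale, not a mismatch between $u^c$ and its extension). The entropy inequality is simply not available for those $h$, and they cannot be discarded since that range carries the $b'$-mass near $\overline{u}^c$. The paper's proof resolves this by shifting the constant to $\kappa^\pm=\overline{u}^c(s,y)\pm\e L_{\overline{u}^c}$, which guarantees $\pm\kappa^\pm\ge\pm\overline{u}^c(t,x)$ on $\supp\rho_\e$, and removes the shift only in the limit $\e\to0$.

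Two further points. First, ``integrating over $h$ between $0$ and $u(t,x)-\overline{u}^c(s,y)$'' is not a legitimate operation on an inequality already integrated over $(t,x)$: the constant must not depend on the integration variable. This is fixable --- integrate over the full half-line $h\in(0,\infty)$ and let the factor $(u-\overline{u}^c-h)^+$ truncate the integrand automatically, which is exactly what the definitions of $H^\pm,G^\pm$ encode --- but it also explains why no $b''$ should appear: differentiating $H^\pm(u,\kappa^\pm)$ in $s$ only hits the lower limit of the $\xi$-integral, giving the clean formula $\mp(u-\kappa^\pm)^\pm b'(\kappa^\pm)\partial_s\kappa^\pm$ rather than your two-term chain rule. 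Second, the passage from $-\int_M\B^{\geq r}[b(u)-b(\bu),b(u)-b(\bu)]$ to the full energy as $r\to0$ requires Fatou's lemma, since finiteness of that energy is precisely the conclusion of the proposition and cannot be presupposed when invoking Proposition \ref{prop: integrationByPartsFormula} for the untruncated operators.
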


\begin{proof}
The family of entropy inequalities \eqref{eq: entropyInequalityVovelleMethod} are those corresponding to the semi-Kru\v{z}kov entropy pairs
\begin{align*}
    u\mapsto ((u-k)^\pm,F^\pm(u,k)).
\end{align*}
We shall now construct a second family of entropy inequalities corresponding to the entropy pairs $u\mapsto (H^\pm(u,k),G^\pm(u,k))$ defined by
\begin{equation}\label{eq: definitionOfEnergyEntropyPairs}
    \begin{split}
            H^+(u,k)=&\,\int_k^\infty(u-\xi)^+ b'(\xi)\dd \xi, \quad
             G^+(u,k)=\,\int_k^\infty F^+(u,\xi)b'(\xi)\dd \xi,\\
      H^-(u,k)=&\,\int_{-\infty}^k(u-\xi)^{-}b'(\xi)\dd \xi,\quad
      G^-(u,k)=\,\int_{-\infty}^kF^-(u,\xi)b'(\xi)\dd \xi.
    \end{split}
\end{equation}
We proceed by performing the calculations for the $(+)$ and $(-)$ case simultaneously, though we stress that one should consider them separate calculations.

Let $k\in\R$ and $0\leq \varphi\in C_c^\infty(M)$ be such that 
\begin{align}\label{eq: energyEstimatePositiveBoundaryCriterion}
    (b(u^c)-b(k))^\pm \varphi =0,
\end{align}
for all $(t,x)\in Q^c$. Observe that $\eqref{eq: energyEstimatePositiveBoundaryCriterion}$ is still valid when replacing $k$ with $\xi$ provided $\pm\xi\geq \pm k$. In particular, it follows that we, for all $\pm\xi\geq \pm k$, have \eqref{eq: entropyInequalityVovelleMethod} with $\xi$ replacing $k$, yielding
  \begin{equation}\label{eq: energyEstimateStepOne}
       \begin{split}
            &\,-\int_Q\big( (u- \xi)^\pm\varphi_t + F^\pm(u, \xi)\cdot\nabla\varphi\big) \dd x \dd t\\
    &\,-\int_Q \L^{\geq r}[b(u)]\sgn^\pm(u- \xi)\varphi \dd x \dd t\\
    &\,- \int_M(b(u)-b( \xi))^\pm\L^{<r}[\varphi]\dd x \dd t\\
    \leq &\, L_f\int_\Gamma (\overline{u}^c- \xi)^\pm\varphi \dd \sigma(x) \dd t,
       \end{split}
   \end{equation}
   where there is no `initial term' on the right-hand side as $\varphi(0,x)=0$.
   Multiplying \eqref{eq: energyEstimateStepOne} by the nonnegative quantity $b'(\xi)$ and integrating over $\xi\in(k,\infty)$ in the $(+)$ case and $\xi\in(-\infty,k)$ in the $(-)$ case we get
   \begin{equation}
       \label{eq: energyEstimateStepTwo}
       \begin{split}
            &\,-\int_Q \big(H^\pm(u,k)\varphi_t + G^\pm(u,k)\cdot\nabla\varphi\big) \dd x \dd t\\
    &\mp\int_M \L^{\geq r}[b(u)](b(u)- b(k))^\pm\varphi \dd x \dd t\\ &\,-\tfrac{1}{2}\int_M((b(u)-b( k))^\pm)^2\L^{<r}[\varphi]\dd x \dd t\\
    \leq &\, L_f\int_\Gamma H^\pm(\overline{u}^c,k)\varphi \dd \sigma(x) \dd t,
       \end{split}
   \end{equation}
where $H^{\pm}$ and $G^\pm$ are as in \eqref{eq: definitionOfEnergyEntropyPairs}. We have for later convenience extended the domain of integration for the second integral in \eqref{eq: energyEstimateStepTwo} from $Q$ to $M$; this is possible as $(b(u)-b(k))^\pm\varphi$ is zero a.e.~in $Q^c$. To summarize, \eqref{eq: energyEstimateStepTwo} is valid for all $k\in\R$ and $0\leq \varphi\in C^\infty_c(M)$ satisfying \eqref{eq: energyEstimatePositiveBoundaryCriterion}. 

Next, we wish to set $k=\bu(t,x)$ in \eqref{eq: energyEstimateStepTwo}, and while this cannot be done directly, we will accomplish this by a `doubling of variables' argument. For parameters $(s,y)\in M$ and $\e\in(0,1)$, we introduce the pair $(\k^\pm,\psi)\in \R\times C^\infty_c(M)$ defined as follows
\begin{equation}
\begin{split}
    \kappa^{\pm}= &\,\kappa^{\pm}_{s,y,\e}=\overline{u}^c(s,y)\pm\e L_{\overline{u}^c},\\ \psi= &\,\psi_{s,y,\e}(t,x)= \tilde\varphi(t,x)\rho_\e(t-s,x-y),
    \end{split}
\end{equation}
 where $ 0\leq \tilde \varphi\in C^\infty_c(M)$ is arbitrary, $\rho_\e$ is a standard mollifier with support in the centered ball of radius $\e$ in $\R\times\R^{d}$ and $L_{\bu}$ is a local Lipschitz constant for $\bu$ valid on $\{(t,x)\in M\colon \mathrm{dist}((t,x),\supp\tilde{\varphi})<1\}$. The support of $\rho_\e$ implies that $\psi$ is zero when $|(t-s,x-y)|\geq \e$, while $|(t-s,x-y)|< \e$ implies that $\pm \kappa^{\pm} \geq \pm \bu(t,x)$. In particular, we have
\begin{align*}
    (b(\bu(t,x))-b(\kappa^{\pm}_{s,y,\e}))^{\pm}\psi_{s,y,\e}(t,x) = 0,
\end{align*}
for all $t,s,x,y,\e$, and we may thus use the pair $(\k^\pm,\psi)$ as the constant and test function in \eqref{eq: energyEstimateStepTwo}. 
Doing so, and additionally writing $\varphi$ instead of $\tilde \varphi$ for notional simplicity, we get after integrating over $(s,y)\in M$ 
 \begin{equation}
       \label{eq: energyEstimateStepThree}
       \begin{split}
            &\,-\int_{Q_x\times M_y} \Big[H^\pm(u,\k^\pm)\varphi_t + G^\pm(u,\k^\pm)\cdot\nabla_x\varphi\Big]\rho_\e \\
    &\mp\int_{M_x\times M_y} \L^{\geq r}_x[b(u)](b(u)- b(\k^\pm))^\pm\varphi\rho_\e  \\
    &\,-\tfrac{1}{2}\int_{M_x\times M_y}((b(u)-b( \k^\pm))^\pm)^2\L^{<r}_x[\varphi\rho_\e]\\
    \leq &\,-\int_{Q_x\times M_y} \Big[H^\pm(u,\k^\pm) \partial_s\rho_\e + G^\pm(u,\k^\pm)\cdot\nabla_y\rho_\e\Big]\varphi.
       \end{split}
   \end{equation}
For brevity, we have here suppressed the differentials $\dd t\dd x \dd s\dd y$ from the integrals and added sub-indexes to the operators and the domains of integration in line with the notation from Section \ref{sec:AssumptionsConceptMain}. We also used the identities $\partial_t\rho_\e=-\partial_s\rho_\e$ and $\nabla_x\rho_\e=-\nabla_y\rho_\e$ and dropped the term on the right-hand side of \eqref{eq: energyEstimateStepTwo} since
\begin{equation*}
    H^\pm(\bu(t,x),\k^\pm)\varphi\rho_\e=0,
\end{equation*}
for all $t,x,s,y,\e$.

Next, we integrate by parts in $(s,y)$ on the right-hand side of \eqref{eq: energyEstimateStepThree}, take $\varepsilon$ so small that the boundary terms (in time) vanish, and add the integral of the quantity $\pm\L_y^{\geq r}[b(\k^\pm)](b(u)-b(\k^\pm))^\pm\varphi\rho_\e$ to each side:
\begin{equation}
       \label{eq: energyEstimateStepFour}
       \begin{split}
            &\,-\int_{Q_x\times M_y} \Big[H^\pm(u,\k^\pm)\varphi_t + G^\pm(u,\k^\pm)\cdot\nabla_x\varphi\Big]\rho_\e \\
    &\mp\int_{M_x\times M_y} \L^{\geq r}_{x+y}[b(u)-b(\k^\pm)](b(u)- b(\k^\pm))^\pm\varphi\rho_\e  \\
    &\,-\tfrac{1}{2}\int_{M_x\times M_y}((b(u)-b( \k^\pm))^\pm)^2\L^{<r}_x[\varphi\rho_\e]\\
    \leq &\, \mp \int_{Q_x\times M_y} \Big[(u-\k^\pm)^\pm \partial_s\k^\pm + F^\pm (u,\k^\pm)\cdot\nabla_y\k\Big]b'(\k^\pm)\varphi \rho_\e\\
    &\pm \int_{Q_x\times M_y} \L_y^{\geq r}[b(\k^\pm)](b(u)-b(\k^\pm))^\pm\varphi \rho_\e.
       \end{split}
   \end{equation}
With $\L_{x+y}$ as defined in \eqref{eq: theXPlussYOperators}, we here used that $\L^{\geq r}_{x+y}[w_1-w_2]=\L^{\geq r}_{x}[w_1]-\L^{\geq r}_{y}[w_2]$ when $w_1$ and $w_2$ are functions in $x$ and $y$ respectively, and we used the formulas
\begin{align*}
    \partial_s H^\pm (u,\k^\pm) =&\, \mp(u-\k^\pm)^\pm b'(\k^\pm)\partial_s\k^\pm,\\ \nabla_yG^\pm(u,\k^\pm)=&\,\mp F^\pm (u,\k^\pm)b'(\k^\pm)\nabla_y\k^\pm,
\end{align*}
which follow from \eqref{eq: definitionOfEnergyEntropyPairs}. Fixing $r'>r$ we split up the operator $\L^{\geq r}_{x+y} = \L^{\geq r'}_{x+y} + \L^{r'>\cdots\geq r}_{x+y}$.  Looking at the first inequality of Corollary \ref{cor:ConvexInequalityNonlocalOperators} for the choice $\eta(\phi)=\frac{1}{2}(\phi^{\pm})^2$, we get the pointwise inequality
\begin{align*}
    \pm\L^{r'\geq\dots\geq r}_{x+y}[b(u)-b(\k^\pm)](b(u)- b(\k^\pm))^\pm
    \leq &\, \tfrac{1}{2}\L^{r'> \cdots\geq  r}_{x+y}\Big[((b(u)-b(\k^\pm))^\pm)^2\Big].
\end{align*}
We thus conclude
\begin{align*}
    &\mp\int_{M_x\times M_y} \L^{\geq r}_{x+y}[b(u)-b(\k^\pm)](b(u)- b(\k^\pm))^\pm\varphi\rho_\e \\
    \geq &\mp\int_{M_x\times M_y} \L^{\geq r'}_{x+y}[b(u)-b(\k^\pm)](b(u)- b(\k^\pm))^\pm\varphi\rho_\e \\
    &\,-\tfrac{1}{2}\int_{M_x\times M_y}((b(u)-b(\k^\pm))^\pm)^2 \L^{r'>\cdots \geq r}_{x}[\varphi]\rho_\e,
\end{align*}
where we used the self-adjointness of $\L_{x+y}^{r'>\cdots \geq r}$ (Proposition \ref{prop: integrationByPartsFormula}) to move it over to $\varphi\rho_\e$ on which the operator satisfies $\L^{r'>\cdots \geq r}_{x+y}[\varphi\rho_\e]=\L^{r'>\cdots \geq r}_{x}[\varphi]\rho_\e$. Exploiting this in \eqref{eq: energyEstimateStepFour}, letting $r\to0$ and then renaming $r'$ as $r$ 
we get
\begin{equation}
       \label{eq: energyEstimateStepFive}
       \begin{split}
            &\,-\int_{Q_x\times M_y} \Big[H^\pm(u,\k^\pm)\varphi_t + G^\pm(u,\k^\pm)\cdot\nabla_x\varphi\Big]\rho_\e \\
    &\mp\int_{M_x\times M_y} \L^{\geq r}_{x+y}[b(u)-b(\k^\pm)](b(u)- b(\k^\pm))^\pm\varphi\rho_\e  \\
    &\,-\tfrac{1}{2}\int_{M_x\times M_y}((b(u)-b( \k^\pm))^\pm)^2\L^{<r}_x[\varphi]\rho_\e\\
    \leq &\,\mp \int_{Q_x\times M_y} \Big[(u-\k^\pm)^\pm \partial_s\k^\pm + F^\pm (u,\k^\pm)\cdot\nabla_y\k^\pm\Big]b'(\k^\pm)\varphi \rho_\e\\
    &\pm \int_{Q_x\times M_y} \L_y^{\geq r}[b(\k^\pm)](b(u)-b(\k^\pm))^\pm\varphi \rho_\e,
       \end{split}
   \end{equation}
which only differs from \eqref{eq: energyEstimateStepFour} in that $\rho_\e$ is outside of the operator $\L_x^{<r}$. 

We may now let $\e\to0$ and by the regularity of $\bu$, $b$, $H^\pm$ and $G^\pm$ we get through standard estimates
\begin{equation}
       \label{eq: energyEstimateStepSix}
       \begin{split}
            &\,-\int_{Q} \Big[H^\pm(u,\bu)\varphi_t + G^\pm(u,\bu)\cdot\nabla\varphi\Big] \\
    &\mp\int_{M} \L^{\geq r}[b(u)-b(\bu)](b(u)- b(\bu))^\pm\varphi  \\
    &\,-\tfrac{1}{2}\int_{M}((b(u)-b( \bu))^\pm)^2\L^{<r}[\varphi]\\
    \leq &\, \mp \int_{Q} \Big[(u-\bu)^\pm \bu_t + F^\pm (u,\bu)\cdot\nabla\bu\Big]b'(\bu)\varphi \\
    &\pm \int_{Q} \L^{\geq r}[b(\bu)](b(u)-b(\bu))^\pm\varphi,
       \end{split}
   \end{equation}
   where $\bu$ is here a function in $(t,x)$; now the only variables of integration.

   Set $\varphi(t,x)=\theta(t)\phi(x)$, where $0\leq \theta\in C^\infty_c((0,T))$ and $0\leq \phi\in C^\infty_c(\R^d)$, and where $\phi(x)=1$ whenever $\mathrm{dist}(x,\Omega)\leq r$ so that both $\nabla\phi$ and $\L^{<r}[\phi]$ are zero in $Q$. Then \eqref{eq: energyEstimateStepSix} collapses to
  \begin{equation}
       \label{eq: energyEstimateStepSeven}
       \begin{split}
            &\,-\int_{Q} H^\pm(u,\bu)\theta' \\
    &\mp\int_{M} \L^{\geq r}[b(u)-b(\bu)](b(u)- b(\bu))^\pm\theta \\
    \leq &\, \mp \int_{Q} \Big[(u-\bu)^\pm \bu_t + F^\pm (u,\bu)\cdot\nabla\bu\Big]b'(\bu)\theta \\
    &\pm \int_{Q} \L^{\geq r}[b(\bu)](b(u)-b(\bu))^\pm\theta,
       \end{split}
   \end{equation} 
 where we used that $b(u)- b(\bu)=0$ in $M\setminus Q$. 
Add the $(+)$ case to the $(-)$ case, and we further get
  \begin{equation}
       \label{eq: energyEstimateStepEight}
       \begin{split}
            &\,-\int_{Q} H(u,\bu)\theta' \\
    &\,+\int_{M} \B^{\geq r}[b(u)-b(\bu),b(u)- b(\bu)]\theta  \\
    \leq &\, -\int_{Q} \Big[(u-\bu) \bu_t + F (u,\bu)\cdot\nabla\bu\Big]b'(\bu)\theta \\
    &\,+ \int_{Q} \L^{\geq r}[b(\bu)](b(u)-b(\bu))\theta.
       \end{split}
   \end{equation} 
where $H=H^++H^-$, $F=F^++F^-$, and we shifted $-\L^{\geq r}$ over to $\B^{\geq r}$ using Proposition \ref{prop: integrationByPartsFormula}.

Finally, for $\epsilon>0$ let $\theta=\theta_\epsilon$ be such that $\theta_\epsilon(t)=1$ for $t\in[\epsilon,T-\epsilon]$ and $\norm{\theta'_\epsilon}{L^1(0,T)}= 2$. Letting $r,\epsilon\to0$ we may for the first term in \eqref{eq: energyEstimateStepEight} use the continuity of $u$ at $t=0$ (Lemma \ref{lem: timeContinuityAtZeroOfEntropySolution}) and the regularity of $H$ to conclude that 
\begin{align*}
    \liminf_{r,\epsilon\to0}\bigg(-\int_{Q} H(u,\bu)\theta_\epsilon'\bigg)\geq -\int_\Omega H(u_0,\bu(0)),
\end{align*}
where $\bu(0)=\bu(0,\cdot)$. For the second term, we apply Fatou's lemma to conclude
\begin{align*}
    \liminf_{r,\epsilon\to0}\int_{M} \B^{\geq r}[b(u)-b(\bu),b(u)- b(\bu)]\theta_\epsilon\geq \int_{M} \B[b(u)-b(\bu),b(u)- b(\bu)].
\end{align*}
By Corollary \ref{cor: L1LocControlOfLevyOnComposition}, the last term in \eqref{eq: energyEstimateStepEight} converges to its canonical limit, and we conclude as $r,\epsilon\to0$ 
\begin{equation}
       \label{eq: energyEstimateStepNine}
       \begin{split}
      &\, -\int_{\Omega}H(u_0,\bu(0)) \\
    &\,+ \int_{M} \B[b(u)-b(\bu),b(u)- b(\bu)]\\
    \leq &\,-\int_{Q} \Big[(u-\bu)\bu_t + F (u,\bu)\cdot\nabla\bu \Big]b'(\bu)\\
    &\,+ \int_{Q} \L[b(\bu)](b(u)-b(\bu)).
       \end{split}
   \end{equation}
The proposition is proved.
\end{proof}


\subsection{Regularity results near the boundary}\label{sec: behaviourOfEntropySolutionNearTheBoundary}

In Lemma \ref{lem: theMotherLemma} below we present a family of inequalities that follow from Definition \ref{def: entropySolutionVovelleMethod}. From this family, we shall derive appropriate boundary regularity of an entropy solution, which is vital for the uniqueness argument. The proof is similar to that of Proposition \ref{prop: finiteEnergy}, but here we perform coarser computations resulting in a less explicit inequality \eqref{eq: equationFromMotherLemma} compared to \eqref{eq: finiteEnergy}. This is due to low-regularity terms that are difficult to handle when `going to the diagonal' in the doubling of variables argument. Hence, these terms are replaced by upper bounds; while precise limits can be computed, we have instead chosen simpler and coarser calculations.  

\begin{remark}\label{rem: explainingHowTheProofOfTheMotherLemmaDiffersFromTheOneInMiVO}
The coming lemma is analogous to Lemma 4.2 in \cite{MiVo03}, but the two proofs differ in that the main steps are interchanged. In \cite{MiVo03} the (local analogue of the) below inequality is proved first for test functions compactly supported inside the domain, done through a doubling of variables argument. Afterwards, the result is extended up to the boundary using a boundary layer sequence satisfying both $\inf_{\d>0}\inf_{x\in\Omega}\Delta\z_\d(x)\geq 0$ and $\sup_{\d>0}\norm{\nabla\z_\d}{L^1(\Omega)}<\infty$. Both properties of $\z_\d$ are crucial for this extension argument, and it is not clear if an analogous boundary layer sequence always exists in our case (where $\L$ replaces $\Delta$). Thus, we instead prove the below inequality \textit{directly} for test functions support up to the boundary. This approach bypasses the need for a special boundary layer sequence, but a new difficulty arises: The admissibility condition \eqref{eq: admissibilityConditionOnConstantAndTestfunction} outside the domain hinders a straight forward doubling of variables argument, and the data $\bu$ must be `lifted' as done in the proof of Proposition \ref{eq: finiteEnergy}. This lifting leads to expressions like $\L[b(\bu \pm \e)]$, which we control by imposing regularity on $\bu$ and $b$ separately rather than just on their composition $b(\bu)$ as done in \cite{MiVo03}.
\end{remark}
\begin{lemma}\label{lem: theMotherLemma}
Assume \eqref{Omegaassumption}--\eqref{muassumption} and $u$ is an entropy solution of \eqref{E}.
Then for $0\leq \varphi\in C_c^\infty([0,T]\times\R^d)$ and $k\in\R$, 
\begin{equation}\label{eq: equationFromMotherLemma}
\begin{split}
       &\,-\int_{Q}\Big[(u-(\overline{u}^c\vee^\pm k))^{\pm} \varphi_t +F^{\pm}(u,\overline{u}^c\vee^\pm k)\cdot\nabla \varphi\Big] \dd x\dd t\\
       &\,+\int_{M} \B\big[(b(u)-b(\overline{u}^c\vee^\pm k))^\pm,\varphi\big] \dd x\dd t\\
    \leq&\, \int_{Q}\Big[(1+dL_f)L_{\overline{u}^c}+\big|\L\big[b((\bu\vee^\pm k))\big]\big|\Big]\varphi \dd x\dd t\\
    &\,+ \int_\Omega \Big[\norm{u_0}{L^\infty(\Omega)}+\norm{\overline{u}^c}{L^\infty(M)}\Big] \varphi(0,\cdot)\dd x,
    \end{split}
\end{equation} 
where $F(u,v)\coloneqq \sgn(u-v)\big(f(u)-f(v)\big)$. Moreover, the right-hand side is bounded by $C\norm{\varphi}{L^\infty}$ where $C$ is the finite constant given by
\begin{equation}\label{eq: constantBoundingMotherLemma}
    \begin{split}
          C\coloneqq&\,  |Q|\big(1+dL_f\big)L_{\bu} + |\Omega|\big(\norm{u_0}{L^\infty(\Omega)} +\norm{\bu}{L^\infty(M)}\big)\\
          &\,+  \Big(\sup_{k}\norm{\L[b(\bu\wedge k)]}{L^1(Q)}\Big)\vee\Big(\sup_{k}\norm{\L[b(\bu\vee k)]}{L^1(Q)}\Big).
    \end{split}
\end{equation}
\end{lemma}
\begin{remark}\label{rem: onWhyTheTermsInTheMotherLemmaAreWellDefined}
    The nonlocal terms in \eqref{eq: equationFromMotherLemma} and \eqref{eq: constantBoundingMotherLemma} are well-posed, but this is not obvious. The terms involving $\L$ will be discussed in 
    the proof below, so we now discuss 
    why $g^{\pm}_k(t,x)\coloneqq (b(u)-b(\bu\vee^{\pm }k))^\pm$ has compact support and finite energy (making the $\B$-pairing well-defined by Proposition \ref{prop: BCompatiblePairs} $\mathrm{(ii)}$). Compact support 
    follows  since 
    $0\leq g^{\pm}_k\leq |b(u)-b(\bu)|$ where the right-hand side is supported on $Q$.
    As for the energy, we can bound its 
    non-singular part 
    using the previous bound and Proposition \ref{prop: BCompatiblePairs} (iv):
    \begin{align*}
        \int_{M}\B^{\geq 1}[g^{\pm}_k,g^{\pm}_k]\,\dd x\dd t\leq 4|Q|\mu(\{|z|\geq r\})\norm{b(u)-b(\bu)}{L^\infty(M)}.
    \end{align*}
For the singular part we can, since $g^{\pm}_k$ is supported in $Q$, write 
$\int_{M}\B^{<1}[g^{\pm}_k,g^{\pm}_k]\dd x\dd t = \int_{U}\B^{<1}[g^{\pm}_k,g^{\pm}_k]\dd x\dd t$ 
for $U=\{(t,x)\in M\colon \mathrm{dist}(x,\Omega)<1\}$. Next, 
$b(\bu\vee^{\pm}k)=b(\bu)\vee^\pm b(k)$ by monotonicity of $b$, and so
\begin{equation*}
g^{\pm}_k=\Big(\big(b(u)-b(\bu)\big)\mp\big(b(k)-b(\bu)\big)^\pm\Big)^{\pm}.
\end{equation*}
And since $\B[\eta(\phi),\eta(\phi)]\leq \B[\phi,\phi]$ for 
$|\eta'|<1$, and 
$\B[\phi +\psi,\phi + \psi]\leq 2\B[\phi,\phi] + 2\B[\psi,\psi]$, we get from the previous identity that
    \begin{align*}
        \int_{M}\B^{<1}[g^{\pm}_k,g^{\pm}_k]\,\dd x\dd t\leq 2\int_{U}\B^{<1}[b(u)-b(\bu),b(u)-b(\bu)]\, \dd x\dd t + 2\int_{U}\B^{<1}[b(\bu),b(\bu)]\, \dd x\dd t,
    \end{align*}
where the right-hand side is finite by Proposition \ref{prop: finiteEnergy} and since $b(\bu)$ is bounded and locally Lipschitz.
\end{remark}

\begin{proof}[Proof of Lemma \ref{lem: theMotherLemma}]
We start from the definition of entropy solutions and the entropy inequality \eqref{eq: entropyInequalityVovelleMethod}. Seeking to replace $k$ by $\overline{u}^c\vee^\pm k$, we double the variables; the first part of this proof is similar to that of Proposition \ref{prop: finiteEnergy}. In particular we also here carry out the proof of the $(+)$ and $(-)$ case simultaneously.

For parameters $(s,y)\in M$, $k\in\R$ and $\e\in(0,1)$, we introduce the pair $(\kappa^{\pm},\psi)\in \R\times C^\infty_c(M)$ defined as follows 
\begin{equation}\label{eq: specialChoiceOfPhiAndK}
\begin{split}
    \kappa^{\pm}= &\,\kappa^{\pm}_{s,y,\e,k}=(\overline{u}^c(s,y)\pm\e L_{\overline{u}^c})\vee^\pm k,\\ \psi= &\,\psi_{s,y,\e}(t,x)= \varphi(t,x)\rho_\e(t-s,x-y),
    \end{split}
\end{equation}
where $ 0\leq  \varphi\in C^\infty_c(M)$ is arbitrary, $\rho_\e$ is a standard mollifier with support in the centered ball of radius $\e$ in $\R\times\R^{d}$ and $L_{\bu}$ is a Lipschitz constant for $\bu$ valid on $\{(t,x)\in M\colon \mathrm{dist}((t,x),\supp\tilde{\varphi})<1\}$. The support of $\rho_\e$ implies that $\psi$ is zero when $|(t-s,x-y)|\geq \e$, while $|(t-s,x-y)|< \e$ implies that $\pm \kappa^{\pm} \geq \pm \bu(t,x)$. In particular, we have
\begin{align}\label{eq: psiAndKappaQualifyAsTestfunctionAndConstant}
    (b(\bu(t,x))-b(\kappa^{\pm}))^{\pm}\psi(t,x) = 0,
\end{align}
for all $t,s,x,y,\e$. Thus, the pair $(\kappa^{\pm},\psi)$ qualifies for use in the entropy inequality \eqref{eq: entropyInequalityVovelleMethod} as the constant and test function (which are denoted $k,\varphi$ in said inequality, and should not to be confused with the same symbols used here). 
Inserting this pair in \eqref{eq: entropyInequalityVovelleMethod}, we get after integrating over $(s,y)\in M$ 
\begin{equation}\label{eq: firstStepAtProvingTheTraceResult}
    \begin{split}
       &\,-\int_{Q_x\times M_y}\Big[(u-\kappa^{\pm})^{\pm} \varphi_t +F^{\pm}(u,\kappa^{\pm})\cdot\nabla_x \varphi\Big]\rho_\e \\
    &\,-\int_{Q_x\times M_y} \L_{x}^{\geq r}[b(u)]\sgn(u-\kappa^{\pm})^{\pm}\varphi\rho_\e\\
    &\,- \int_{M_x\times M_y}(b(u)-b(\kappa^{\pm}))^{\pm}\L_x^{<r}[\varphi\rho_\e]\\
    \leq&\, 
    -\int_{Q_x\times M_y}\Big[(u-\kappa^{\pm})^{\pm}\partial_s\rho_\e + F^{\pm}(u,\kappa^{\pm})\cdot\nabla_y\rho_\e\Big]\varphi,
    \end{split}
\end{equation}
where we use the sub-index notation from Section \ref{sec:AssumptionsConceptMain}. We removed both the `initial term' and the `boundary term' from the right-hand-side of \eqref{eq: entropyInequalityVovelleMethod} as they are here both zero; the former because $\varphi=0$ for $t=0$, and the latter because of a similar argument that gave \eqref{eq: psiAndKappaQualifyAsTestfunctionAndConstant}. We also used the two identities $\partial_t\rho_\e=-\partial_s\rho_\e$ and $\nabla_x\rho_\e=-\nabla_y\rho_\e$. 

Integrating by parts in $(s,y)$ on the right-hand side of \eqref{eq: energyEstimateStepThree}, taking $\varepsilon$ so small that the boundary terms vanishes, and adding the integral of $\L_y^{\geq r}[b(\k^\pm)](b(u)-b(\k^\pm))^\pm\varphi\rho_\e$ over $Q_x\times M_y$ to each side we further get
\begin{equation}\label{eq: secondStepAtProvingTheTraceResult}
    \begin{split}
       &\,-\int_{Q_x\times M_y}\Big[(u-\kappa^{\pm})^{\pm} \varphi_t +F^{\pm}(u,\kappa^{\pm})\cdot\nabla_x \varphi\Big]\rho_\e \\
    &\,-\int_{Q_x\times M_y} \L_{x+y}^{\geq r}[b(u)-b(\kappa^{\pm})]\sgn(u-\kappa^{\pm})^{\pm}\varphi\rho_\e\\
    &\,-\int_{M_x\times M_y} (b(u)-b(\kappa^{\pm}))^{\pm}\L_x^{<r}[\varphi\rho_\e]\\
    \leq&\, 
    \int_{Q_x\times M_y}\Big[\partial_s\big((u-\kappa^{\pm})^\pm\big) + \mathrm{div}_y\big(F^{\pm}(u,\kappa^{\pm})\big)\Big]\varphi\rho_\e\\
    &\,  +\int_{Q_x\times M_y}\L_{y}^{\geq r}[b(\kappa^{\pm})]\sgn(u-\kappa^{\pm})^{\pm}\varphi\rho_\e.
    \end{split}
\end{equation}
With $\L_{x+y}$ as defined in \eqref{eq: theXPlussYOperators}, we here used that $\L^{\geq r}_{x+y}[w_1-w_2]=\L^{\geq r}_{x}[w_1]-\L^{\geq r}_{y}[w_2]$ when $w_1$ and $w_2$ are functions in $x$ and $y$ respectively, and we also integrated by parts in $(s,y)$ on the right-hand side. By Lipschitz continuity, it is clear that the weak derivatives on the right-hand side of \eqref{eq: secondStepAtProvingTheTraceResult} satisfy
 \begin{align*}
     |\partial_s\big((u-\kappa^{\pm})^\pm\big)| + |\mathrm{div}_y\big(F^{\pm}(u,\kappa^{\pm})\big)|\leq L_{\bu} + dL_fL_{\bu},
 \end{align*}
 since $L_{\bu}$ is a Lipschitz constant for $\k$ on $Q_y$. 

Observe next that the domain of integration for the second integral on the left-hand side of \eqref{eq: secondStepAtProvingTheTraceResult} may be extended from $Q_x\times M_y$ to $M_x\times M_y$ as $\sgn^{\pm}(u-\k)^\pm\rho_\e=0$ on $Q_x^{c}\times M_y$ (by the same argument used for \eqref{eq: psiAndKappaQualifyAsTestfunctionAndConstant}). Doing so, we may further infer
\begin{align*}
    \int_{M_x\times M_y} \L_{x+y}^{\geq r}[b(u)-b(\kappa^{\pm})]\sgn(u-\kappa^{\pm})^{\pm}\varphi\rho_\e \leq&\, \int_{M_x\times M_y} \L_{x+y}^{\geq r}[(b(u)-b(\kappa^{\pm}))^\pm]\varphi\rho_\e\\
    = &\,\int_{M_x\times M_y} (b(u)-b(\kappa^{\pm}))^\pm\L_{x}^{\geq r}[\varphi]\rho_\e,
\end{align*}
 where we used the second inequality of Corollary \ref{cor:ConvexInequalityNonlocalOperators}, the self-adjointness of $\L_{x+y}^{\geq r}$ (Proposition \ref{prop: integrationByPartsFormula}) and the identity $\L_{x+y}^{\geq r}[\varphi\rho_\e]=\rho_\e\L_{x}^{\geq r}[\varphi]$.  
Exploiting these bounds in \eqref{eq: secondStepAtProvingTheTraceResult}, we find that
\begin{equation}\label{eq: thirdStepAtProvingTheTraceResult}
    \begin{split}
       &\,-\int_{Q_x\times M_y}\Big[(u-\kappa^{\pm})^{\pm} \varphi_t +F^{\pm}(u,\kappa^{\pm})\cdot\nabla_x \varphi\Big]\rho_\e \\
    &\,-\int_{M_x\times M_y} (b(u)-b(\kappa^{\pm}))^\pm\L_{x}^{\geq r}[\varphi]\rho_\e\\
    &\,-\int_{Q_x\times M_y} (b(u)-b(\kappa^{\pm}))^{\pm}\L_x^{<r}[\varphi\rho_\e]\\
    \leq&\, 
    \int_{Q_x\times M_y}\Big[(1+dL_f)L_{\bu} + |\L_{y}^{\geq r}[b(\kappa^{\pm})]|\Big]\varphi\rho_\e.
    \end{split}
\end{equation}
Next, we let $r\to 0$. The limits on the left-hand side of \eqref{eq: thirdStepAtProvingTheTraceResult} are straight forward by Lemma \ref{lem: L^pBoundnessOfLvWhenVIsC2AndCompactSpport} and so we turn our attention to the right-hand side. It is easily verified that the function $h^{\pm}_{k,\e}(\cdot)\coloneqq b\big((\cdot\pm \e L_{\bu}) \vee^{\pm} k\big)$ satisfies
\begin{align}\label{eq: heightAndTVBoundOnHPrime}
\norm{(h^{\pm}_{k,\e})'}{L^\infty(\R)}\leq \norm{b'}{L^\infty(\R)} \quad \quad \text{and}\quad\quad |(h^{\pm}_{k,\e})'|_{TV(\R)}\leq |b'|_{TV(\R)} + \norm{b'}{L^\infty(\R)} 
\end{align}
(the global regularity of $b$ is due to Remark \ref{assumptionremark}), and that the definition \eqref{eq: specialChoiceOfPhiAndK} of $\kappa^{\pm}$ implies that
\begin{equation}\label{eq: rewritingKappaForTheTraceResult}
    h^{\pm}_{k,\e}(\bu)=b(\kappa^{\pm}).
\end{equation}
By Corollary \ref{cor: L1LocControlOfLevyOnComposition} we then get $\lim_{r\to0}\L_{y}^{\geq r}[b(\k^\pm)] = \L_{y}[b(\k^\pm)]$ in $L^1_{\mathrm{loc}}(M_y)$. Using this in \eqref{eq: thirdStepAtProvingTheTraceResult} yields
\begin{equation}\label{eq: fourthStepAtProvingTheTraceResult}
    \begin{split}
       &\,-\int_{Q_x\times M_y}\Big[(u-\kappa^{\pm})^{\pm} \varphi_t +F^{\pm}(u,\kappa^{\pm})\cdot\nabla_x \varphi\Big]\rho_\e \\
    &\,-\int_{M_x\times M_y} (b(u)-b(\kappa^{\pm}))^\pm\L_{x}[\varphi]\rho_\e\\
    \leq&\, 
    \int_{Q_x\times M_y}\Big[(1+dL_f)L_{\bu} + |\L_{y}[b(\kappa^{\pm})]|\Big]\varphi\rho_\e.
    \end{split}
\end{equation}
Finally, we wish to send $\e\to 0$. The left-hand can be dealt with using classical arguments (cf. e.g. \cite{CiJa11}) and so we again focus on the right-hand side. We shall first establish the limit
\begin{align}\label{eq: trulyVanishingSingularity}
\lim_{r\to0}\sup_{\e\in(0,1)} \int_{Q_x\times M_y}|\L_{y}^{<r}[b(\kappa^{\pm})]|\varphi\rho_\e = 0.
\end{align}
Letting $U_y$ denote the set of points in $M_y$ such that $\mathrm{dist}(y,\Omega)\leq 1$, we see that the support of $(s,y)\mapsto \rho_\e(t-s,x-y)$ lies in $U_y$ for all $(t,x)\in Q_x$ and $\e\in(0,1)$. Thus, for $\e\in(0,1)$
\begin{align*}
       \int_{Q_x\times M_y}|\L_{y}^{<r}[b(\kappa^{\pm})]|\varphi\rho_\e
         \leq &\, \norm{\varphi}{L^\infty(M)}\int_{U_y}|\L_{y}^{<r}[h^{\pm}_{k,\e}(\bu)]|,
\end{align*}
where we use the notation from \eqref{eq: rewritingKappaForTheTraceResult}. By the explicit bound \eqref{eq: explicitBoundOfLevyOnCompositionOnBoundedSets} of Corollary \ref{cor: L1LocControlOfLevyOnComposition}, the uniform bounds \eqref{eq: heightAndTVBoundOnHPrime}, and assumption \eqref{u^cassumption}, there is a finite constant $C'>0$ only depending on $U_y$ and $\bu$ such that
\begin{equation*}
\begin{split}
    \norm{\L_{y}^{<r}[h^{\pm}_{k,\e}(\bu)]}{L^1(U_y)}\leq&\, C'(\norm{b'}{L^\infty(\R)} +|b'|_{TV(\R)})
    \int_{|z|<r}(|z|^2\wedge 1)\dd\mu(z).
\end{split}
\end{equation*}
This term converges to $0$ as $r\to0$ by the dominated convergence theorem and \eqref{muassumption}.

Thus we attain \eqref{eq: trulyVanishingSingularity}, and exploiting it, we may compute
\begin{align*}
    \lim_{\e\to 0}\int_{Q_x\times M_y}|\L_{y}[b(\kappa^{\pm})]|\varphi\rho_\e = \lim_{r\to 0}\lim_{\e\to 0} \int_{Q_x\times M_y}|\L_{y}^{\geq r}[b(\kappa^{\pm})]|\varphi\rho_\e
    =  \int_{Q}|\L[b(\bu\vee^{\pm} k)]|\varphi \dd x\dd t,
\end{align*}
where $\bu$ on the right-most side is a function in $(t,x)$. Here we approximated $\L_y$ by $\L^{\geq r}_y$ uniformly in $\varepsilon$ by \eqref{eq: trulyVanishingSingularity}, used 
that $\L^{\geq r}_y$ is a zero order operator to pass to the limit in $\varepsilon$, and then concluded by the limit definition of $\L$. 
Sending $\e\to0$ in \eqref{eq: fourthStepAtProvingTheTraceResult}, we then find that 
\begin{equation}\label{eq: fifthStepAtProvingTheTraceResult}
    \begin{split}
       &\,-\int_{Q}\Big[(u-(\bu\vee^\pm k))^{\pm} \varphi_t +F^{\pm}(u,(\bu\vee^\pm k))\cdot\nabla \varphi\Big]\dd x\dd t \\
    &\,-\int_{M} (b(u)-b((\bu\vee^\pm k)))^\pm\L[\varphi]\dd x\dd t\\
    \leq&\, 
    \int_{Q}\Big[(1+dL_f)L_{\bu}+\big|\L\big[b((\bu\vee^\pm k))\big]\big|\Big]\varphi\dd x\dd t,
    \end{split}
\end{equation}
where $\bu$ now is a function in $(t,x)$. Finally, we want to extend the support of $\varphi$ to include $t=0$ and $t=T$ as we have so far assumed it to have compact support in $M=(0,T)\times \R^d$. In \eqref{eq: fifthStepAtProvingTheTraceResult} we make the substitution $\varphi\mapsto \theta_\epsilon\varphi$, where $(\theta_\epsilon)_{\epsilon>0}\subset C^\infty_c((0,T))$ is a family of nonnegative functions satisfying $\lim_{\epsilon\to0}\theta_\epsilon(t)=1$ for $t\in (0,T)$ and $\norm{\theta_\epsilon'}{L^1((0,T))}=2$ for all $\epsilon$ while the new $\varphi$ is like previous one. Exploiting the time continuity of $u$ at $t=0$, we get the inequality
\begin{align*}
    \lim_{\epsilon\to 0}\int_{Q}(u-(\bu\vee^\pm k))^{\pm}\varphi\theta_\epsilon' \dd x\dd t \leq&\,     \int_{
    \Omega}(u_0-(\bu(0,\cdot)\vee^\pm k))^{\pm}\varphi(0,\cdot)\dd x\\
    \leq&\, \int_{
    \Omega}\big(\norm{u_0}{L^\infty(\Omega)}+\norm{\bu}{L^\infty(M)}\big)\varphi(0,\cdot)\dd x.
\end{align*}
Thus, inserting for $\varphi$ in \eqref{eq: fifthStepAtProvingTheTraceResult} and letting $\epsilon\to0$ we get \eqref{eq: equationFromMotherLemma} after shifting the operator $\L$ to $\B$; this shift is justified by Proposition \ref{prop: integrationByPartsFormula}, Proposition \ref{prop: BCompatiblePairs}, and Remark \ref{rem: onWhyTheTermsInTheMotherLemmaAreWellDefined}. 

 Finally, that the nonlocal terms in \eqref{eq: constantBoundingMotherLemma} are finite (and well-defined) follows again by Corollary \ref{cor: L1LocControlOfLevyOnComposition} applied to $h_{k,0}^{\pm}(\bu)$ where $h_{k,0}^{\pm}$ is as in \eqref{eq: heightAndTVBoundOnHPrime}.
\end{proof}

\begin{corollary}\label{cor: measureRepresentationOfMotherLemma}
Under the assumptions of Lemma \ref{lem: theMotherLemma}, there exists a finite signed Borel measure $\nu_k^{\pm}$ on $[0,T]\times\R^d$, which integrates to zero\footnote{$\int_{[0,T]\times\R^d}\dd\nu_k^{\pm}=\nu_k^{\pm}([0,T]\times\R^d)=0$.} and is non-positive on $[0,T]\times(\R^d\setminus\Omega)$, and such that
\begin{equation}
\label{eq: measureRepresentationOfMotherLemma}
    \begin{split}
     \int_{[0,T]\times\R^d}\varphi\dd\nu_k^{\pm}=&\,-\int_{Q}\Big[(u-(\overline{u}^c\vee^\pm k))^{\pm} \varphi_t +F^{\pm}(u,\overline{u}^c\vee^\pm k)\cdot\nabla \varphi\Big] \dd x\dd t\\
       &\,+\int_{M} \B\big[(b(u)-b(\overline{u}^c\vee^\pm k))^\pm,\varphi\big] \dd x\dd t,
\end{split}
\end{equation}
for any $\varphi\in C^\infty_b([0,T]\times\R^d)$. Moreover, the total variation norm of $\nu_k^\pm$ admits the bound $\|\nu_k^\pm\|\leq 2C$, where $C$ is as in \eqref{eq: constantBoundingMotherLemma}.
\end{corollary}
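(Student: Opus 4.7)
My plan is to read the measure $\nu_k^\pm$ off Lemma \ref{lem: theMotherLemma} by a Jordan-type construction, and then to verify the two geometric claims (mass zero and non-positivity on $[0,T]\times(\R^d\setminus\Omega)$) directly from the structure of the inequality \eqref{eq: equationFromMotherLemma}. Throughout, let $\Lambda=\Lambda_k^\pm\colon C_c^\infty([0,T]\times\R^d)\to\R$ denote the linear functional equal to the left-hand side of \eqref{eq: equationFromMotherLemma}; every term is well-defined by \eqref{Omegaassumption}--\eqref{muassumption}, Proposition \ref{prop: BCompatiblePairs}, and the finite energy of $(b(u)-b(\bu\vee^\pm k))^\pm$ inherited from Proposition \ref{prop: finiteEnergy}.

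By Lemma \ref{lem: theMotherLemma} combined with Remark \ref{rem: existenceOfUniformConstantBoundingTheMotherLemma}, I will use the one-sided bound $\Lambda(\varphi)\leq\int\varphi\,d\omega$ (valid for all $0\leq\varphi\in C_c^\infty$), where $\omega=\omega_k^\pm$ is the explicit finite positive Borel measure supported in $[0,T]\times\overline{\Omega}$ whose pairing against $\varphi$ equals the right-hand side of \eqref{eq: equationFromMotherLemma} and which satisfies $\omega([0,T]\times\R^d)\leq C$. The auxiliary functional $\omega-\Lambda$ is then positive and linear on $C_c^\infty$; by monotonicity it is sup-norm bounded uniformly on test functions with support in any fixed compact, so it extends to a positive linear functional on $C_c([0,T]\times\R^d)$ and, by Riesz representation, is given by a unique finite positive Radon measure $\tilde\nu_k^\pm\leq\omega$ (so $\tilde\nu_k^\pm([0,T]\times\R^d)\leq C$). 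I then define
$$
\nu_k^\pm:=\omega-\tilde\nu_k^\pm,
$$
a finite signed Borel measure satisfying $\int\varphi\,d\nu_k^\pm=\Lambda(\varphi)$ for all $\varphi\in C_c^\infty$ and with total variation $\|\nu_k^\pm\|\leq\|\omega\|+\|\tilde\nu_k^\pm\|\leq 2C$. The representation extends from $C_c^\infty$ to $C_b^\infty$ by approximating $\varphi$ via $\varphi\cdot\eta(\cdot/R)$ and passing to the limit by dominated convergence on both sides (finite total variation of $\nu_k^\pm$ on one side, Proposition \ref{prop: finiteEnergy} on the other).

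For non-positivity on $[0,T]\times(\R^d\setminus\Omega)$, I will take $0\leq\varphi\in C_c^\infty$ supported in that set; since $\varphi\equiv 0$ on $Q$, $\partial_t\varphi$ and $\nabla\varphi$ both vanish on $Q$ and the first two terms of $\Lambda(\varphi)$ are zero. Writing $\psi:=(b(u)-b(\bu\vee^\pm k))^\pm\geq 0$, which vanishes on $(0,T)\times\Omega^c$ because $u=\bu$ there, Proposition \ref{prop: integrationByPartsFormula} gives $\Lambda(\varphi)=-\int_M\L[\psi]\varphi\,dx\,dt$; on the support of $\varphi$ one has $\L[\psi]=\int\psi(\cdot+z)\,d\mu(z)\geq 0$, whence $\Lambda(\varphi)\leq 0$. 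For vanishing total mass, I will test with $\varphi_R(t,x)=\eta(x/R)$ where $\eta\in C_c^\infty(\R^d)$, $0\leq\eta\leq 1$, $\eta\equiv 1$ on $B_1$, and $R$ so large that $\varphi_R\equiv 1$ on $\overline{\Omega}$: once again the first two terms of $\Lambda(\varphi_R)$ vanish and only $-\int_M\L[\psi]\varphi_R\,dx\,dt$ remains. A splitting $\L=\L^{<1}+\L^{\geq 1}$ together with the compact support of $\psi$ shows $\L[\psi]\in L^1(M)$, and a Fubini argument exploiting the symmetry of $\mu$ gives $\int_{\R^d}\L[\psi(t,\cdot)]\,dx=0$ for a.e.\ $t$, so dominated convergence forces $\Lambda(\varphi_R)\to 0$; dominated convergence for $\nu_k^\pm$ in turn gives $\int\varphi_R\,d\nu_k^\pm\to\nu_k^\pm([0,T]\times\R^d)$, and hence $\nu_k^\pm$ integrates to zero.

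The hard part will be the Jordan-type step in the second paragraph: Lemma \ref{lem: theMotherLemma} only provides a one-sided bound, so the Riesz representation theorem does not apply to $\Lambda$ directly. The workaround is to identify $\omega$ as an honest finite positive Borel measure and to apply Riesz to the positive auxiliary functional $\omega-\Lambda$ instead; everything after that is bookkeeping with \eqref{eq: equationFromMotherLemma} and the nonlocal tools of Section \ref{sec:Preliminaries}, together with the mild technical point that $\L[\psi]$ is a mean-zero $L^1$ function on $\R^d$, which is what makes the cut-off argument pass cleanly to the limit.
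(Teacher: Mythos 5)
Your construction of $\nu_k^\pm$ via the positive functional $\omega-\Lambda$ is a legitimate alternative to the paper's route (the paper instead proves the two-sided bound $|\Lambda(\varphi)|\le 2C\norm{\varphi}{L^\infty}$ directly, by writing $\Lambda(\varphi)=\lim_n \Lambda(\varphi+\psi_n)$ for cut-offs $\psi_n\nearrow 1$, applying \eqref{eq: equationFromMotherLemma} to the nonnegative function $\varphi+\psi_n$, and then invoking Riesz for bounded functionals). But two points do not hold as stated. The minor one: the domination $\tilde\nu_k^\pm\le\omega$ is false in general — it would require $\Lambda(\varphi)\ge0$ for all $0\le\varphi$, which is the \emph{opposite} of what Lemma \ref{lem: theMotherLemma} gives; indeed $\omega$ vanishes off $[0,T]\times\overline{\Omega}$ while $\tilde\nu_k^\pm=-\Lambda\ge0$ there, so the two measures are not comparable. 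The finiteness of $\tilde\nu_k^\pm$ and the bound $\|\tilde\nu_k^\pm\|\le C$ must instead be read off from $\tilde\nu_k^\pm(\varphi_R)=\omega(\varphi_R)-\Lambda(\varphi_R)\to\|\omega\|\le C$, i.e.\ from the $\Lambda(\varphi_R)\to0$ computation you only perform later; the ingredients are present in your proposal, but the stated justification is wrong and the steps need reordering to avoid asserting finiteness before it is available.

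The serious gap is the non-positivity claim. The set $[0,T]\times(\R^d\setminus\Omega)$ contains $\overline{\Gamma}=[0,T]\times\po$, and any $0\le\varphi\in C_c^\infty$ with $\supp\varphi\subseteq[0,T]\times(\R^d\setminus\Omega)$ vanishes on all of $[0,T]\times\overline{\Omega}$ by continuity. Such test functions cannot detect a positive singular part of $\nu_k^\pm$ on $\overline{\Gamma}$: the measure $\delta_{(t_0,x_0)}$ with $x_0\in\po$ pairs to zero against every such $\varphi$, yet is not non-positive on $[0,T]\times(\R^d\setminus\Omega)$. Your argument therefore only yields $\nu_k^\pm\le0$ on $[0,T]\times(\R^d\setminus\overline{\Omega})$, and the boundary piece is exactly what is needed downstream (the proof of Proposition \ref{prop: boundaryConditionEntropySolution} uses that $\nu_k$ is non-positive on $[0,T]\times\po$). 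The paper reaches the boundary by testing with $(1-\z_\d)\varphi$ for a boundary layer sequence $(\z_\d)_\d$ and letting $\d\to0$: each $(1-\z_\d)\varphi$ is nonnegative so \eqref{eq: equationFromMotherLemma} applies, the right-hand side tends to zero because $\z_\d\to1$ on $\Omega$, and $\int(1-\z_\d)\varphi\,\dd\nu_k^\pm\to\int_{[0,T]\times(\R^d\setminus\Omega)}\varphi\,\dd\nu_k^\pm$. Some such limiting device is indispensable. A smaller caveat in the same step: rewriting $\int_M\B[\psi,\varphi]$ as $-\int_M\L[\psi]\varphi$ presupposes $\L[\psi]$ is well-defined in $L^1$, which is not known here (boundary integrability is a \emph{consequence} of this corollary, not an input); the sign is better obtained directly from the disjointness of the supports of $\psi$ and $\varphi$, which makes the integrand of $\B[\psi,\varphi]$ pointwise non-positive.
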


\begin{proof}
For brevity, let $T_k^{\pm}\colon C^\infty_b([0,T]\times\R^d)\to \R$ denote the operator such that $T_k^{\pm}(\varphi)$ coincides with the right-hand side of \eqref{eq: measureRepresentationOfMotherLemma} or, equivalently, the left-hand side of \eqref{eq: equationFromMotherLemma}. Pick $0\leq\psi\in C_c^\infty(\R^d)$ such that $\psi=1$ for $|x|\leq 1$ and set $\psi_n(x)=\psi(x/n)$. Note that $\lim_{n\to\infty}T^{\pm}_k(\psi_n)=0$. For an arbitrary $ \varphi\in C^\infty_c([0,T]\times\R^d)$ satisfying $|\varphi|\leq 1$, we can exploit the linearity of $T^\pm_k$ to compute
\begin{align*}
    T^{\pm}_k(\varphi)= \lim_{n\to\infty}T^{\pm}_k(\varphi + \psi_n)\leq 2C.
\end{align*}
The inequality follows from applying \eqref{eq: equationFromMotherLemma} (possible as $\varphi+\psi_n$ is nonnegative for a sufficiently large $n$) followed by using $|\varphi + \psi_n|\leq 2$. 
The linearity of $T_k^\pm$ then implies $|T^\pm_k(\varphi)|\leq 2C\norm{\varphi}{L^\infty}$ for any $ \varphi\in C^\infty_c([0,T]\times\R^d)$ and so by the Riesz representation theorem (see e.g. \cite{Rud87}) there exists a signed Borel measure $\nu_k^\pm$, of total variation norm $\|\nu_k^\pm\|\leq 2C$, such that $T_k^\pm(\varphi)=\int_{[0,T]\times\R^d}\varphi\dd\nu_k^\pm$ for all $\varphi\in C^\infty_c([0,T]\times\R^d)$. That this relation can be extended to $\varphi\in C^\infty_b([0,T]\times \R^d)$ follows by inserting $\varphi\psi_n$, with $\psi_n$ as before, and letting $n\to\infty$; each side converges to the canonical limit.

As it is clear that $\nu_k^\pm$ integrates to zero, it remains to prove that it is non-positive on $[0,T]\times(\R^d\setminus\Omega)$. This can be seen as follows: Let $\z_\d$ denote a boundary layer sequence and observe that we for any $0\leq\varphi\in C^\infty_c([0,T]\times\R^d)$ have
\begin{align*}
    \int_{[0,T]\times(\R^d\setminus\Omega)}\varphi\dd\nu_k^\pm = \lim_{\d\to0}  \int_{[0,T]\times\R^d}(1-\z_\d)\varphi\dd\nu_k^\pm = \lim_{\d\to0} T^\pm_k\big((1-\z_\d)\varphi\big)\leq 0,
\end{align*}
where we used \eqref{eq: equationFromMotherLemma}.
\end{proof}

We now demonstrate three forms of boundary regularity which are consequences of the general identity \eqref{eq: measureRepresentationOfMotherLemma}. They will be referred to as \textit{boundary integrability}, \textit{weak trace} and \textit{boundary condition}. The two latter regularity properties have local analogs; in \cite{MiVo03} they are referred to as `weak normal trace' and `boundary condition' respectively. The former however, boundary integrability, has no local analog in neither \cite{MiVo03} nor \cite{MaPoTe02}, and is in this sense a truly nonlocal feature of the equation.

\subsubsection{Boundary integrability}
Consider \eqref{eq: measureRepresentationOfMotherLemma} when $k=-\norm{\bu}{L^\infty(M)}$ in the $(+)$ case and $k=\norm{\bu}{L^\infty(M)}$ in the $(-)$ case; if we add the two resulting identities and let $\nu$ denote the sum of the corresponding measures, we get
\begin{equation}
\label{eq: theMeasureForTheWeakTrace}
    \begin{split}
     \int_{[0,T]\times\R^d}\varphi\dd\nu=&\,-\int_{Q}\Big[|u-\overline{u}^c| \varphi_t +F(u,\overline{u}^c)\cdot\nabla \varphi\Big] \dd x\dd t\\
       &\,+\int_{M} \B\big[|b(u)-b(\overline{u}^c)|,\varphi\big] \dd x\dd t,
\end{split}
\end{equation}
for all $\varphi\in C^\infty_b([0,T]\times \R^d)$. Here $F\coloneqq F^++F^-$ so that
$$
F(u,\bu)=\sgn(u-\bu)(f(u)-f(\bu))
$$ 
and $\nu$ is a finite signed Borel measure which integrates to zero and whose total variation norm satisfies $\|\nu\|\leq 4C$ with $C$ as in \eqref{eq: constantBoundingMotherLemma}. This identity puts a restriction on how much $b(u)$ can differ from $b(\bu)$ close to $\Gamma$ as shown in the following proposition.

\begin{proposition}[Boundary integrability]\label{prop: boundaryIntegrability}
Assume \eqref{Omegaassumption}--\eqref{muassumption} and $u$ is an entropy solution of \eqref{E}.
Then 
\begin{equation}\label{eq: theEquationOfBoundaryIntegrability}
    \begin{split}
\,\int_{0}^T\int_{\substack{x\in\Omega\\ x+z\in 
            \Omega^c}}|b(u(t,x))-b(\bu(t,x))|\dd \mu(z)\dd x\dd t
            \leq&\,
    C,
    \end{split}
\end{equation}
where $C$ is finite and given by \eqref{eq: constantBoundingMotherLemma}.
\end{proposition}

\begin{proof}
Let $(\overline{\z}_{\d})_{\d>0}$ be an outer boundary layer sequence (cf. \eqref{exteriorBoundaryLayerSequence}). Replacing $\varphi$ by $\overline{\z}_\d$ in \eqref{eq: theMeasureForTheWeakTrace}, we get
\begin{equation}\label{eq: stepOneOfProvingBoundaryIntegrability}
\begin{split}
       \int_{M} \B\big[|b(u)-b(\overline{u}^c)|,\overline{\z}_\d\big] \dd x\dd t= \int_{[0,T]\times\R^d}\overline{\z}_\d\dd\nu\leq 2C,
    \end{split}
\end{equation}
where the inequality follows as the positive part of $\nu$ is bounded by $2C$. For brevity, we now set $g(t,x)\coloneqq |b(u(t,x))-b(\bu(t,x))|$ for $(t,x)\in M$. Writing out the $\B$ term, we find
\begin{align}\label{eq: explicitBOfGAndExteriorBoundaryLayerSequence}
    \int_{M} \B\big[g,\overline{\z}_\d\big] \dd x\dd t
    =&\, \int_{M}\int_{\R^d}\Big(g(t,x+z)-g(t,x)\Big)\Big(\overline{\z}_\d(x+z)-\overline{\z}_\d(x)\Big)\dd\mu(z)\dd x\dd t.
\end{align}
As $g$ has finite energy (it is dominated by that of $b(u)-b(\bu)$) and $\overline{\z}_\d\in C^\infty_c(\R^d)$, the integrand on the right-hand side is absolutely integrable, and so we need not worry about the order of integration. Exploiting that $\overline{\z}_\d=1$ in $\overline{\Omega}$ and that $g(t,x)=0$ for 
$x\in \Omega^c$, the integral in \eqref{eq: explicitBOfGAndExteriorBoundaryLayerSequence} reduces to 
\begin{align*}
    &\, \int_0^T\int_{\substack{x\in\Omega \\
    x+z\in 
    {\Omega}^c}}g(t,x)(1-\overline{\z}_\d(x+z))\dd\mu(z)\dd x\dd t + \int_0^T\int_{\substack{x+z\in\Omega \\
    x\in 
    {\Omega}^c}}g(t,x+z)(1-\overline{\z}_\d(x))\dd\mu(z)\dd x\dd t \\
    =&\,2\int_0^T\int_{\substack{x\in\Omega \\
    x+z\in \Omega^c}}g(t,x)(1-\overline{\z}_\d(x+z))\dd\mu(z)\dd x\dd t\to 2\int_0^T\int_{\substack{x\in\Omega \\
    x+z\in 
    {\Omega}^c}}g(t,x)(1-\chi_{\overline{\Omega}}(x+z))\dd\mu(z)\dd x\dd t,
\end{align*}
as $\d\to0$: The equality holds as we in the second integral can perform the change of variables $x\mapsto x-z$ followed by $z\mapsto-z$, and the limit is valid by monotonicity and Fubini's theorem. Inserting this limit on the left-hand side of \eqref{eq: stepOneOfProvingBoundaryIntegrability}, we get \eqref{eq: theEquationOfBoundaryIntegrability} save for the weight $1-\chi_{\overline\Omega}(x+z)$ in the integrand. On the domain of integration, $x\in\Omega$ and $x+z\in \Omega^c$, this weight coincides with $1-\chi_{\partial\Omega}(x+z)$. By \eqref{Omegaassumption}, we infer that $\chi_{\partial\Omega}(x)=0$ for a.e.~$x\in\R^d$, and so $x\mapsto\int_{\R^d}\chi_{\partial\Omega}(x+z)\dd\mu(z)$ is zero for a.e.~$x\in\R^d$; see (b) from Remark \ref{rem: defentsolnremark} (while $\mu$ is not finite here, we can use finite approximations $\mu_n(E)\coloneqq \mu(E\setminus \{0<|z|<n^{-1}\})$ to get the same conclusion by monotonicity). That is, the weight $(1-\chi_{\overline{\Omega}}(x+z))$ can safely be removed from the integral without changing its value. 
\end{proof}

A corollary of the previous proposition is the following $\B$-pairing property of $b(u)-b(\bu)$.

\begin{corollary}\label{cor: extraRegularityOfEntropySolution}
Under the assumptions of Proposition \ref{prop: boundaryIntegrability}, the integrand of $\B\big[b(u)-b(\overline{u}^c),\varphi\cha\big]$ is absolutely integrable with respect to $\dd\mu(z)\dd x\dd t$ for all $\varphi\in C^\infty([0,T]\times\R^d)$.
 \end{corollary}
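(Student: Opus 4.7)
The plan is to bound the triple integral of $|g(t,x+z)-g(t,x)|\cdot|(\varphi\cha)(t,x+z)-(\varphi\cha)(t,x)|$ over $(t,x,z)$ with respect to $\dd\mu(z)\dd x\dd t$, where $g\coloneqq b(u)-b(\bu)$. First I would reduce to the compactly supported case: since $\varphi\cha$ depends only on values of $\varphi$ on $\overline{\Omega}$, multiplying $\varphi$ by a smooth cutoff equal to $1$ on $\overline{\Omega}$ and vanishing outside a neighborhood of it leaves $\varphi\cha$ unchanged, so I may assume $\varphi\in C^\infty_c([0,T]\times\R^d)$. Next I would split the increment of the product via the Leibniz-type identity
\[
(\varphi\cha)(x+z)-(\varphi\cha)(x)=\big(\varphi(x+z)-\varphi(x)\big)\cha(x+z)+\varphi(x)\big(\cha(x+z)-\cha(x)\big),
\]
giving two contributions $I_1$ and $I_2$ to estimate separately.

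For $I_1$, I plan to apply Young's inequality, bounding the integrand by $\tfrac{1}{2}|g(x+z)-g(x)|^2+\tfrac{1}{2}|\varphi(x+z)-\varphi(x)|^2$. The first term integrates to $2\int_M\B[g,g]\dd x\dd t$, which is finite by the energy estimate (Proposition~\ref{prop: finiteEnergy}). The second term is handled routinely: since $\varphi\in C^\infty_c$ is bounded and Lipschitz with compact spatial support inside some fixed compact $K$, one has $|\varphi(x+z)-\varphi(x)|^2\leq C(|z|^2\wedge 1)(\chi_K(x)+\chi_K(x+z))$, and $\int(|z|^2\wedge 1)\dd\mu(z)<\infty$ by \eqref{muassumption}.

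The hard part will be $I_2$, because $\cha$ is only of bounded variation and none of the $\B$-compatibility criteria of Proposition~\ref{prop: BCompatiblePairs} apply directly to the pair $(g,\varphi\cha)$; this is where the specifically nonlocal boundary integrability result is needed. I would observe that $|\cha(x+z)-\cha(x)|$ vanishes unless exactly one of $x,x+z$ lies in $\Omega$, and use that $g=0$ a.e.\ in $\Omega^c$: in the subcase $x\in\Omega$, $x+z\in\Omega^c$, the factor $|g(x+z)-g(x)|$ collapses to $|g(x)|$, and symmetrically in the other subcase. After the substitution $x\mapsto x-z$ followed by $z\mapsto -z$ on the second subcase (using symmetry of $\mu$), I would collect the two pieces into
\[
\int_0^T\int_{\substack{x\in\Omega\\ x+z\in\Omega^c}}|g(t,x)|\big(|\varphi(t,x)|+|\varphi(t,x+z)|\big)\dd\mu(z)\dd x\dd t\;\leq\;2\|\varphi\|_{L^\infty}\cdot C,
\]
with the final bound coming precisely from boundary integrability (Proposition~\ref{prop: boundaryIntegrability}). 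Summing the bounds for $I_1$ and $I_2$ delivers the desired absolute integrability.
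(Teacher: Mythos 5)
Your proof is correct and follows essentially the same route as the paper: the paper simply decomposes the integral directly according to whether $x$ and $x+z$ lie in $\Omega$ or $\Omega^c$ (using that $g$ and $\varphi\cha$ are supported in $Q$), bounds the interior piece by the finite energy of $g$ from Proposition \ref{prop: finiteEnergy} together with the smoothness of $\varphi$, and identifies the two boundary pieces via the same $x\mapsto x-z$, $z\mapsto -z$ substitution before invoking Proposition \ref{prop: boundaryIntegrability} — exactly the two ingredients you use. Your Leibniz splitting plus Young's inequality and the preliminary cutoff are only cosmetic variations (and the constant in front of $\int_M\B[g,g]$ is immaterial), so no further comment is needed.
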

 \begin{proof}
Similarly to the previous proof, we set $g(t,x)\coloneqq b(u(t,x))-b(\bu(t,x))$ for brevity, and compute
 \begin{align*}
     &\,\int_{0}^T\int_{\R^d\times\R^d}|g(t,x+z)-g(t,x)||(\varphi\cha)(t,x+z)-(\varphi\cha)(t,x)|\dd\mu(z)\dd x\dd t\\
     =&\,\int_{0}^T\int_{\substack{x\in\Omega\\x+z\in\Omega}}|g(t,x+z)-g(t,x)||\varphi(t,x+z)-\varphi(t,x)|\dd\mu(z)\dd x\dd t\\
     &\,+\int_{0}^T\int_{\substack{x\in\Omega\\x+z\in\Omega^c}}|g(t,x)||\varphi(t,x)|\dd\mu(z)\dd x\dd t\\
     &\,+\int_{0}^T\int_{\substack{x\in\Omega^c\\x+z\in\Omega}}|g(t,x+z)||\varphi(t,x+z)|\dd\mu(z)\dd x\dd t,
     \end{align*}
where we used that both $g$ and $\varphi\cha$ are supported in $Q$.
The first integral on the right-hand side is finite as $g$ is of finite energy (Proposition \ref{prop: finiteEnergy}) and $\varphi$ is smooth, while the two latter integrals (which by a change of variables can be seen to coincide) are finite due to Proposition \ref{prop: boundaryIntegrability}.
\end{proof}

\subsubsection{Weak trace}
We next establish a weak trace identity resulting from \eqref{eq: theMeasureForTheWeakTrace}.

\begin{proposition}[Weak trace]\label{prop: weakTrace}
 Assume \eqref{Omegaassumption}--\eqref{muassumption} and $u$ is an entropy solution of \eqref{E}.
 Let $\nu$ be the finite Borel measure from \eqref{eq: theMeasureForTheWeakTrace}. Then, for any boundary layer sequence $(\z_\d)_{\d>0}$, $r>0$, and $\varphi\in C^\infty_b([0,T]\times \R^d)$,  we have
\begin{equation}\label{eq: weakTrace}
\begin{split}
       &\,\lim_{\d\to0}\Bigg(\int_Q F(u,\overline{u}^c)\cdot (\nabla \z_\d) \varphi \dd x\dd t-\int_{M}\B^{<r}[ |b(u)-b(\overline{u}^c)|,\z_\d]\varphi \dd x \dd t\Bigg)\\
     = &\,\int_{\overline{\Gamma}}(1-\z)\varphi \dd \nu  - \int_M \B^{<r}\big[|b(u)-b(\overline{u}^c)|,\cha\big]\varphi \dd x \dd t,
\end{split}
\end{equation}
where $\overline{\Gamma}= [0,T]\times\partial\Omega$, $F(u,\bu)=\sgn(u-\bu)(f(u)-f(\bu))$, and $\z$ is the limit of $\z_\d$ on $\po$. The right-hand side of \eqref{eq: weakTrace} side is non-positive, and the expression inside the $\lim$ on the left-hand side 
is
bounded by $\tilde{C}\norm{\varphi}{C^1(M)}$ for a $\tilde{C}>0$ independent of $\d,\varphi$.
\end{proposition}

\begin{proof}[Proof of Proposition \ref{prop: weakTrace}]
Choosing $(1-\z_\d)\varphi$ as the test function in \eqref{eq: theMeasureForTheWeakTrace}, we get after a little rewriting
\begin{equation}\label{eq: stepOneForWeakTrace}
    \begin{split}
              &\,\int_{Q}F(u,\bu)\cdot(\nabla\z_\d)\varphi\dd x\dd t - \int_{M} \B^{<r}\big[|b(u)-b(\overline{u}^c)|,\z_\d\big] \varphi\dd x\dd t \\
       =&\,\int_{[0,T]\times \R^d}(1-\z_\d)\varphi \dd \nu +\int_{Q}\Big[|u-\overline{u}^c| \varphi_t +F(u,\overline{u}^c)\cdot\nabla \varphi\Big](1-\z_\d) \dd x\dd t  \\
    &\,-\int_{M} \B^{< r}\big[|b(u)-b(\overline{u}^c)|,\varphi\big](1-\z_\d) \dd x\dd t-\int_{M} \B^{\geq r}\big[|b(u)-b(\overline{u}^c)|,(1-\z_\d)\varphi\big] \dd x\dd t,
    \end{split}
\end{equation}
where we decomposed $\B=\B^{\geq r}+\B^{<r}$ for some fixed $r>0$ and used the product rule for $\B^{<r}$ (Proposition \ref{prop: productRuleForB}). Note that the very last part of the proposition follows from \eqref{eq: stepOneForWeakTrace}, the uniform bound $|\z_\d|\leq 1$, the total variation bound $\|\nu\|\leq 4C$ with $C$ as in \eqref{eq: constantBoundingMotherLemma}, the finite energy and the compact support of $|b(u)-b(\bu)|$; the integrals involving $\B^{<r}$ and $\B^{\geq r}$ on the right-hand side are bounded by $\mathrm{(ii)}$ and $\mathrm{(iv)}$ from Proposition \ref{prop: BCompatiblePairs}.

Letting $\d\to 0$ in \eqref{eq: stepOneForWeakTrace} we obtain
\begin{equation}\label{eq: stepTwoForWeakTrace}
    \begin{split}
            &\,\lim_{\d\to0}\Bigg(\int_Q F(u,\overline{u}^c)\cdot (\nabla \z_\d) \varphi \dd x\dd t-\int_{M}\B^{<r}\big[ |b(u)-b(\overline{u}^c)|,\z_\d\big]\varphi \dd x \dd t\Bigg)\\
    =&\, \int_{\overline{\Gamma}}(1-\z)\varphi \dd \nu + \int_{[0,T]\times(\R^d\setminus\overline{\Omega})}\varphi \dd \nu -\int_{M} \B^{< r}\big[|b(u)-b(\overline{u}^c)|,\varphi\big](1-\cha) \dd x\dd t\\
    &\,-\int_{M} \B^{\geq r}\big[|b(u)-b(\overline{u}^c)|,(1-\cha)\varphi\big] \dd x\dd t.
    \end{split}
\end{equation}
It remains to show that the right-hand side of \eqref{eq: stepTwoForWeakTrace} coincides with that of \eqref{eq: weakTrace}. Consider the special case of an outer boundary layer sequence $(\overline{\z}_\d)_{\d>0}$ (cf. \eqref{exteriorBoundaryLayerSequence}). Using that $\overline{\z}_\d=1$ on $\overline{\Omega}$, \eqref{eq: stepTwoForWeakTrace} reads
\begin{equation}\label{eq: stepThreeForWeakTrace}
    \begin{split}
            &\,-\lim_{\d\to0}\int_{M}\B^{<r}\big[ |b(u)-b(\overline{u}^c)|,\overline{\z}_\d\big]\varphi \dd x \dd t\\
    =&\, \int_{[0,T]\times(\R^d\setminus\overline{\Omega})}\varphi \dd \nu -\int_{M} \B^{< r}\big[|b(u)-b(\overline{u}^c)|,\varphi\big](1-\cha) \dd x\dd t\\
    &\,-\int_{M} \B^{\geq r}\big[|b(u)-b(\overline{u}^c)|,(1-\cha)\varphi\big] \dd x\dd t.
    \end{split}
\end{equation}
But in the case of an outer boundary layer sequence, the limit on the left-hand side of \eqref{eq: stepThreeForWeakTrace} may be computed explicitly giving
\begin{equation}\label{eq: explicitLimitOfBWhenWeUseExteriorBoundaryLayerSequence}
    -\lim_{\d\to 0}\int_M \B^{<r}\big[|b(u)-b(\bu)|,\overline{\z}_\d\big]\varphi\dd x\dd t = -\int_M \B^{<r}\big[|b(u)-b(\bu)|,\cha\big]\varphi\dd x\dd t.
\end{equation}
For brevity we skip this computation as it is almost identical to the proof of Proposition \ref{prop: boundaryIntegrability}. In conclusion, the three last terms on the right-hand side of \eqref{eq: stepTwoForWeakTrace} may be replaced by the right-hand side of \eqref{eq: explicitLimitOfBWhenWeUseExteriorBoundaryLayerSequence} and so we attain \eqref{eq: weakTrace}. Finally, that the right-hand side of \eqref{eq: weakTrace} is non-positive is a consequence of Corollary \ref{cor: measureRepresentationOfMotherLemma} and that $\B^{<r}[|b(u)-b(\bu)|,\cha]\geq 0$ (as can be seen by writing it out).
\end{proof}

\subsubsection{Boundary condition}
Recall that $F=F^++F^-$, and define the quantities
    \begin{equation}\label{eq: KruzkovBoundaryEntropy-EntropyFlux}
    \begin{split}
        \mathcal{F}= \mathcal{F}(u,\overline{u}^c,k) &\coloneqq F(u,\overline{u}^c)+F(u,k)-F(\overline{u}^c,k),\\
          \Sigma=\Sigma(u,\overline{u}^c,k) &\coloneqq |b(u)-b(\overline{u}^c)| + |b(u)-b(k)|-|b(\overline{u}^c)-b(k)|.
          \end{split}
    \end{equation}
As was observed in \cite{MiVo03}, these quantities may alternatively be written as
    \begin{equation}\label{eq: identitySatisfiedByKruzkovBoundaryEntropy-EntropyFlux}
    \begin{split}
\mathcal{F}/2 = &\, F^{+}(u,
       \overline{u}^c \vee^+ k)+F^{-}(u,
       \overline{u}^c \vee^- k),\\
  \Sigma/2 = &\,(b(u)-b(\overline{u}^c \vee^+ k))^{+}+(b(u)-b(\overline{u}^c \vee^- k))^{-},
          \end{split}
    \end{equation}
which is verified by considering the three cases $u< \bu\vee^{-}k$, $u> \bu\vee^{+}k$, and $u\in[\bu\vee^{-}k,\bu\vee^{+}k]$. In particular, we infer from \eqref{eq: identitySatisfiedByKruzkovBoundaryEntropy-EntropyFlux} that $|\mathcal{F}|\leq 2L_f|u-\bu|$ and $0\leq \Sigma\leq 2|b(u)-b(\bu)|$, and that $\Sigma$ has finite energy by Remark \ref{rem: onWhyTheTermsInTheMotherLemmaAreWellDefined}.

\begin{proposition}[Boundary condition]\label{prop: boundaryConditionEntropySolution}
Assume \eqref{Omegaassumption}--\eqref{muassumption} and $u$ is an entropy solution of \eqref{E}.
Then, for any inner boundary layer sequence $(\z_\d)_{\d>0}$, $k\in\R$, $r>0$, and $0\leq \varphi\in C^\infty_b([0,T]\times\R^d)$, 

\begin{align}\label{eq: theUsefulBoundaryCondition}
    \lim_{\d\to 0}&\Bigg( \int_Q \big(\mathcal{F}\cdot\nabla\z_\d\big)\varphi \dd x\dd t- \int_M \B^{<r}\big[\Sigma,\z_\d\big]\varphi \dd x\dd t\Bigg)\leq 0.
\end{align}
Moreover, the expression inside the $\lim$ 
is
bounded by $\tilde{C}\norm{\varphi}{C^1(M)}$ for a $\tilde{C}>0$ independent of $\d,k,\varphi$.
\end{proposition}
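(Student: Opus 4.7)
The idea is to feed the test function $\psi = (1-\z_\d)\varphi$ into the family of inequalities \eqref{eq: equationFromMotherLemma} for both choices of sign, combine them, and pass to the limit $\d\to 0$. Strictly speaking, for $\varphi\in C^\infty([0,T]\times\R^d)$ the function $(1-\z_\d)\varphi$ is in $C_b^\infty$ rather than $C_c^\infty$, so we will either use the measure form Corollary~\ref{cor: measureRepresentationOfMotherLemma} (which extends to $C_b^\infty$) or first multiply by a fixed smooth cutoff and remove it at the end.

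First, expand the test function. Since $\z_\d=\z_\d(x)$, we have $((1-\z_\d)\varphi)_t=(1-\z_\d)\varphi_t$ and $\nabla((1-\z_\d)\varphi)=-\nabla\z_\d\,\varphi+(1-\z_\d)\nabla\varphi$. Writing $g^\pm=(b(u)-b(\bu\vee^\pm k))^\pm$ and applying the product rule of Proposition~\ref{prop: productRuleForB} together with $\B[g^\pm,1-\z_\d]=-\B[g^\pm,\z_\d]$,
\begin{equation*}
\int_M\B[g^\pm,(1-\z_\d)\varphi]\,\dd x\dd t=-\int_M\B[g^\pm,\z_\d]\varphi\,\dd x\dd t+\int_M\B[g^\pm,\varphi](1-\z_\d)\,\dd x\dd t.
\end{equation*}
Plugging this into \eqref{eq: equationFromMotherLemma} for both $\pm$, summing, and using the identities \eqref{eq: identitySatisfiedByKruzkovBoundaryEntropy-EntropyFlux} ($\mathcal F/2=F^+(u,\bu\vee^+k)+F^-(u,\bu\vee^-k)$ and $\Sigma/2=g^++g^-$) produces, after splitting $\B=\B^{<r}+\B^{\geq r}$ in the $-\int_M \B[\cdot,\z_\d]\varphi$ term, an inequality of the shape
\begin{equation*}
\int_Q\mathcal F\cdot\nabla\z_\d\,\varphi\,\dd x\dd t - \int_M\B^{<r}[\Sigma,\z_\d]\varphi\,\dd x\dd t \le R_\d+S_\d+T_\d,
\end{equation*}
where $R_\d$ collects the RHS of the mother lemma and the bulk terms with factor $(1-\z_\d)$ on $Q$, $S_\d=\int_M\B^{\geq r}[\Sigma,\z_\d]\varphi$, and $T_\d=-\int_M\B[\Sigma,\varphi](1-\z_\d)$.

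Now take $\d\to 0$. For an inner boundary layer sequence, $(1-\z_\d)\to 0$ on $\Omega$ and to $1$ on $\R^d\setminus\Omega$, so by dominated convergence $R_\d\to 0$ (using $\L[b(\bu\vee^\pm k)]\in L^1(Q)$ from Corollary~\ref{cor: L1LocControlOfLevyOnComposition}), $S_\d\to\int_M\B^{\geq r}[\Sigma,\cha]\varphi$ (as $\B^{\geq r}$ is zero order), and $T_\d\to -\int_{M\setminus Q}\B[\Sigma,\varphi]$, whose integrability comes from Proposition~\ref{prop: BCompatiblePairs} together with the finite energy of $\Sigma$ (controlled by that of $|b(u)-b(\bu)|$ through Proposition~\ref{prop: finiteEnergy}) and Proposition~\ref{prop: boundaryIntegrability}. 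Using Corollary~\ref{cor: measureRepresentationOfMotherLemma}, the LHS of the mother lemma is in fact $\int(1-\z_\d)\varphi\,\dd\nu_k^\pm$, which tends to $\int_{[0,T]\times(\R^d\setminus\Omega)}\varphi\,\dd\nu_k^\pm\le 0$ because $\nu_k^\pm$ is non-positive there and $\varphi\ge 0$.

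The main obstacle is the sign analysis of the residual $\lim(S_\d+T_\d)=\int_M\B^{\geq r}[\Sigma,\cha]\varphi-\int_{M\setminus Q}\B[\Sigma,\varphi]$. Here one crucially uses that $\Sigma\ge 0$ is supported in $Q$ (since $u=u^c$ on $Q^c$ forces $g^\pm=0$ there), together with the product rule and symmetry of $\mu$, to rewrite
\begin{equation*}
\int_M\B^{\geq r}[\Sigma,\cha]\varphi - \int_M(1-\cha)\B[\Sigma,\varphi] = -\int_M\B^{<r}[\Sigma,\cha]\varphi - \int_M\B[\Sigma,(1-\cha)\varphi]
\end{equation*}
and to combine this expression with the non-positive contribution from $\nu_k^\pm$ on $[0,T]\times(\R^d\setminus\Omega)$. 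The careful accounting of these nonlocal boundary pieces—paralleling the divergence-theorem manipulations of \cite{MiVo03} but handled purely through the bilinear form $\B$—is the delicate part that seals the $\le 0$ inequality.
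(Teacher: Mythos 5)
Your overall architecture is close to the paper's but diverges at one decisive point, and that is exactly where the proposal has a gap. The paper does not test the measure identity of Corollary \ref{cor: measureRepresentationOfMotherLemma} against $(1-\z_\d)\varphi$; it tests against $(1-\z_\d)\overline{\z}_\d\varphi$, where $\overline{\z}_\d$ is an \emph{outer} boundary layer sequence. Since $(1-\z_\d)\overline{\z}_\d\to\chi_{\partial\Omega}$, a Lebesgue-null set, every absolutely continuous residual (your $R_\d$, $S_\d$, $T_\d$) vanishes in the limit and only $2\int_{\overline{\Gamma}}\varphi\,\dd\nu_k\leq 0$ survives; passing from $\B^{<r}[\Sigma,(1-\z_\d)\overline{\z}_\d]$ back to $-\B^{<r}[\Sigma,\z_\d]$ then costs only the one-line sign observation $\int_M\B^{<r}[\Sigma,\overline{\z}_\d]\varphi\geq 0$. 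With your choice, $(1-\z_\d)\to\chi_{\Omega^c}$ is \emph{not} null, the residual survives, and it is genuinely not non-positive. Indeed, in your own rewriting $-\int_M\B^{<r}[\Sigma,\cha]\varphi-\int_M\B[\Sigma,(1-\cha)\varphi]$ the first term is $\leq 0$ but the second is $\geq 0$ (since $\Sigma\geq0$ lives in $Q$ and $(1-\cha)\varphi\geq0$ lives in $Q^c$, the integrand of $\B$ is pointwise $\leq0$); writing both out and changing variables, the sum equals
\begin{equation*}
\int_0^T\!\!\int_{\substack{x\in\Omega,\ x+z\in\Omega^c\\ |z|\geq r}}\Sigma\,\varphi(x+z)\dd\mu(z)\dd x\dd t
\;+\;\frac12\int_0^T\!\!\int_{\substack{x\in\Omega,\ x+z\in\Omega^c\\ |z|< r}}\Sigma\,\big(\varphi(x+z)-\varphi(x)\big)\dd\mu(z)\dd x\dd t,
\end{equation*}
whose first integral is strictly positive for generic data (take $\varphi\equiv 1$). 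So the conclusion cannot be "sealed" by sign inspection of the residual alone, and the step you flag as "the delicate part" is precisely the step that is missing.

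The route can still be completed, but the cancellation you allude to must actually be exhibited. Testing Corollary \ref{cor: measureRepresentationOfMotherLemma} against functions compactly supported in $[0,T]\times(\R^d\setminus\overline{\Omega})$ (where all $Q$-integrals drop out) identifies $2\nu_k$ on the open exterior as the absolutely continuous measure with density $-\int_{x+z\in\Omega}\Sigma(t,x+z)\dd\mu(z)$; after a change of variables and an approximation argument (whose integrability is supplied by Proposition \ref{prop: boundaryIntegrability}) this yields $2\int_{[0,T]\times(\R^d\setminus\overline{\Omega})}\varphi\dd\nu_k=\int_M\B[\Sigma,(1-\cha)\varphi]$, which exactly cancels the offending positive term and leaves $\lim_{\d}(\cdots)=2\int_{\overline{\Gamma}}\varphi\dd\nu_k-\int_M\B^{<r}[\Sigma,\cha]\varphi\leq0$ --- the same limit the paper obtains. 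As written, your proposal asserts this accounting rather than performing it, and since the sign of the whole statement hinges on it, this is a genuine gap; the paper's insertion of the extra factor $\overline{\z}_\d$ is precisely the device that makes the entire issue disappear.
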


\begin{proof}
For a fixed $k$, the $(+)$ and $(-)$ case of \eqref{eq: measureRepresentationOfMotherLemma} results in two identities, each featuring their own measure $\nu_k^+$ and $\nu_k^-$. Setting $\nu_k\coloneqq\nu_k^++\nu_k^-$, and adding these two identities together, we get for any $0\leq \varphi\in C^\infty_c([0,T]\times\R^d)$
\begin{equation}\label{eq: stepOneInProvingTheBoundaryCondition}
\begin{split}
       \int_{[0,T]\times\R^d}\varphi\dd\nu_k=&\,-\int_{Q}\Big[(u-(\overline{u}^c\vee^+ k))^{+}+(u-(\overline{u}^c\vee^- k))^{-}\Big] \varphi_t \dd x\dd t\\
       &\,-\frac{1}{2}\int_{Q}\mathcal{F}\cdot\nabla \varphi \dd x\dd t+\frac{1}{2}\int_{M} \B[\Sigma,\varphi] \dd x\dd t,
    \end{split}
\end{equation}
where we used the formula \eqref{eq: identitySatisfiedByKruzkovBoundaryEntropy-EntropyFlux}. Next, we substitute $\varphi\mapsto (\overline{\z}_\d-\z_\d)\varphi$ in \eqref{eq: stepOneInProvingTheBoundaryCondition}, where the new $\varphi$ is as before, while $\z_\d$ and $\overline{\z}_\d$ represents an inner and outer boundary layer sequence respectively (cf. \eqref{interiorBoundaryLayerSequence} and \eqref{exteriorBoundaryLayerSequence}). By the product rules for $\nabla$ and $\B$ we find
\begin{equation*}
\begin{split}
      -\int_{Q}\mathcal{F}\cdot\nabla \big((\overline{\z}_\d-\z_\d)\varphi \big)\dd x\dd t =&\,\int_{Q}\big(\mathcal{F}\cdot\nabla \z_\d\big)\varphi\dd x\dd t - \int_{Q}\big(\mathcal{F}\cdot\nabla \varphi\big)(1-\z_\d)\dd x\dd t,\\
      \int_{M} \B\big[\Sigma,(\overline{\z}_\d-\z_\d)\varphi\big] \dd x\dd t =&\,\int_{M} \B\big[\Sigma,\overline{\z}_\d\big]\varphi \dd x\dd t-\int_{M} \B\big[\Sigma,\z_\d\big]\varphi \dd x\dd t+\int_{M} \B\big[\Sigma,\varphi\big](\overline{\z}_\d-\z_\d) \dd x\dd t,
    \end{split}
\end{equation*}
where we, for the first identity, used that $\overline{\z}_{\d}=1$ in $\overline{\Omega}$. Using these in \eqref{eq: stepOneInProvingTheBoundaryCondition} (after the substitution $\varphi\mapsto (\overline{\z}_\d-\z_\d)\varphi$) together with some rewriting yields
\begin{equation}\label{eq: stepTwoInProvingTheBoundaryCondition}
\begin{split}
&\,\int_{Q}\big(\mathcal{F}\cdot\nabla \z_\d\big)\varphi\dd x\dd t-\int_{M} \B^{<r}\big[\Sigma,\z_\d\big]\varphi \dd x\dd t\\
           =&\,2\int_{[0,T]\times\R^d}(\overline{\z}_\d-\z_\d)\varphi \dd\nu_k+ \int_{M} \B^{\geq r}\big[\Sigma,\z_\d\big]\varphi \dd x\dd t-\int_{M} \B\big[\Sigma,\overline\z_\d\big]\varphi \dd x\dd t\\
           &\,+ 2\int_{Q}\Big[(u-(\overline{u}^c\vee^+ k))^{+}+(u-(\overline{u}^c\vee^- k))^{-}\Big] (\overline{\z}_\d-\z_\d)\varphi_t \dd x\dd t\\
           &\,+ \int_{Q}\big(\mathcal{F}\cdot\nabla \varphi\big)(1-\z_\d)\dd x\dd t - \int_{M} \B\big[\Sigma,\varphi\big](\overline{\z}_\d-\z_\d) \dd x\dd t,
    \end{split}
\end{equation}
where we decomposed $\B[\Sigma,\z_\d]=\B^{\geq r}[\Sigma,\z_\d] + \B^{<r}[\Sigma,\z_\d]$ for some fixed $r>0$. We pause here to note that the last part of the proposition follows from \eqref{eq: stepTwoInProvingTheBoundaryCondition}. Indeed, one has the uniform bounds $|\z_\d|,|\overline{\z}_\d|\leq 1$, the total variation bound $\|\nu_k\|\leq 4C$ with $C$ as in \eqref{eq: constantBoundingMotherLemma}, the bounds $|\mathcal{F}|\leq 2L_f|u-\bu|$ and $0\leq \Sigma\leq 2|b(u)-b(\bu)|$, and finite energy of $\Sigma$ (as explained below \eqref{eq: identitySatisfiedByKruzkovBoundaryEntropy-EntropyFlux}). In particular, the two integrals involving $\B^{\geq r}[\Sigma,\z_\d]$ and $\B[\Sigma,\varphi]$ can be bounded by $\mathrm{(ii)}$ and $\mathrm{(iv)}$ from Proposition \ref{prop: BCompatiblePairs}, while the integral involving $\B[\Sigma,\overline{\z}_\d]$ is bounded by Corollary \ref{cor: extraRegularityOfEntropySolution}; this latter claim is not so obvious and so we explain it in detail: Since $\Sigma(t,x)=0$ for $x\in \Omega^c$ and $\overline{\z}_\d=1$ for $x\in\Omega$, we get that $2\B[\Sigma,\overline{\z}_\d]$ may be written
\begin{equation*}
\int_{\R^d}\Big(\Sigma(t,x+z)-\Sigma(t,x)\Big)\Big(\overline{\z}_\d(x+z)-\overline{\z}_\d(x)\Big)\dd\mu(z)=\begin{cases}
     \int_{x+z\in\Omega^c}\Sigma(t,x)(1-\overline{\z}_\d(x+z))\dd\mu(z),\text{ if } x\in\Omega,\\
      \int_{x+z\in\Omega}\Sigma(t,x+z)(1-\overline{\z}_\d(x))\dd\mu(z),\text{ if }x\in\Omega^c.
  \end{cases} 
\end{equation*}
Using $0\leq \Sigma \leq 2|b(u)-b(\bu)|$ and $0\leq (1-\overline{\z}_\d)\leq 1$ in this last expression, we obtain the bound
\begin{equation}\label{eq: theBoundOnSigmaPairedWithOuterBoundaryLayerSequence}
    0\leq \B[\Sigma,\overline{\z}_\d]\leq 2\B[|b(u)-b(\bu)|,\cha],
\end{equation}
for a.e.~$(t,x)\in M$. By Corollary \ref{cor: extraRegularityOfEntropySolution}, the right-hand side of \eqref{eq: theBoundOnSigmaPairedWithOuterBoundaryLayerSequence} is integrable over $M$.

Next, we let $\d\to 0$ in \eqref{eq: stepTwoInProvingTheBoundaryCondition}: We have the pointwise limit $(\overline{\z}_\d-\z_\d)\to\chi_{\partial\Omega}$ and the two $L^1(M)$-limits $\B^{\geq r}\big[\Sigma,\z_\d\big]\to \B^{\geq r}\big[\Sigma,\cha\big]$ and $\B\big[\Sigma,\overline{\z}_\d\big]\to \B\big[\Sigma,\cha\big]$ where the latter is a consequence of dominated convergence (we have \eqref{eq: theBoundOnSigmaPairedWithOuterBoundaryLayerSequence}) and the fact that $\cha$ and $\chi_{\overline{\Omega}}$ differ on a Lebesgue null set (see the discussion at the end of the proof of Proposition \ref{prop: boundaryIntegrability}). Thus, when $\d\to0$, we obtain from \eqref{eq: stepTwoInProvingTheBoundaryCondition}
\begin{equation*}
       \lim_{\d\to 0}\Bigg(\int_{Q}\big(\mathcal{F}\cdot\nabla \z_\d\big)\varphi \dd x\dd t-\int_{M} \B^{<r}\big[\Sigma,\z_\d\big]\varphi \dd x\dd t\Bigg) = 2\int_{\overline{\Gamma}}\varphi \dd\nu_k - \int_{M} \B^{<r}\big[\Sigma,\cha\big]\varphi \dd x\dd t,
\end{equation*}
where $\overline{\Gamma}\coloneqq [0,T]\times\partial\Omega$. The proposition is thus proved, because $\nu_k$ is non-positive on $\overline{\Gamma}$ (by Corollary \ref{cor: measureRepresentationOfMotherLemma}) and because $\B^{<r}[\Sigma,\cha]\geq 0$ (which is proved similarly as \eqref{eq: theBoundOnSigmaPairedWithOuterBoundaryLayerSequence}).
\end{proof}


\section{Uniqueness of entropy solutions}\label{sec: uniqueness}
This section is devoted to proving the uniqueness of entropy solutions of \eqref{E}. The proof deploys the classical Kruskov's `doubling of variables' device as developed for nonlocal problems in \cite{CiJa11,Ali07}, and follows a similar path as the one developed in the work of Otto \cite{Ott96}, Michel and Vovelle \cite{MiVo03} and Mascia et al. \cite{MaPoTe02}. Its length is due to the many non-trivial limits that need to be computed. To get a rough understanding of the coming proof, we give a formal overview of its steps:

For two entropy solutions $u,v$ with the same exterior data $\bu=\bv$ the proof starts off classically by doubling the variables, $u=u(t,x)$ and $v=v(s,y)$, followed by combining the two entropy inequalities for a nonnegative test function in the variables $(t,x,s,y)$ which is compactly supported in $Q\times Q$. The test function is chosen as a product of boundary layer sequences (for compact support) and a standard mollifier in the variables $t-s$ and $x-y$.

Just as in the classical hyperbolic case \cite{Kru70} the mollifier commutes with the symmetric operators $(\partial_t+\partial_s)$ and $(\nabla_x + \nabla_y)$, which is necessary for avoiding blow up when we later let the mollifier approximate $\d(t-s,x-y)$. In the degenerate parabolic-hyperbolic case, the $\Delta$-term poses some challenge at this step since $\Delta_x + \Delta_y$ does \textit{not} commute with the mollifier. This difficulty was overcome by Carrillo who in \cite{Car99} showed that the necessary `cross-terms' could be safely introduced into the entropy inequality, replacing this operator by the more appropriate $(\nabla_x+\nabla_y)\cdot(\nabla_x+\nabla_y)$. In our nonlocal case, a similar difficulty arises for $\L$, but only for its singular part (which has been subjected to a convex inequality \cite{CiJa11,Ali07}). Of course, this singular part vanishes as $r\to 0$, which is how the proof proceeds. The remaining `good' part of $\L$ is then written in the appropriate $(x,y)$-symmetric form using Proposition \ref{prop: CarrilloStyleCross-Terms} and partially shifted on to the test function to avoid blow up as $r\to 0$. 

We then let the support of the test function expand up to the boundary. Here we introduce a new $r>0$ to once again divide $\L$ into a singular and non-singular part. As the boundary layers converge to the characteristic function on $\Omega$, we avoid potential blow up of the convection term and the singular part of $\L$ by resorting to the boundary condition of Proposition \ref{prop: boundaryConditionEntropySolution}. Essentially, this condition lets us exchange cumbersome differences of $u$ and $v$ with more manageable ones, such as a difference of $u$ and $\bu$ which admits a weak trace by Proposition \ref{prop: weakTrace}. 

Finally, we `go to the diagonal' by letting the mollifier tend to $\d(t-s,x-y)$ in an appropriate sense. Here we again deploy the weak trace result of Proposition \ref{prop: weakTrace} to compute some of the limits and, conveniently, these new trace-terms cancel out the ones from before, which could not else be dropped due to having a `bad' sign. After the completion of this process, the desired conclusion is reached modulo some debris from the singular part of $\L$. But these undesirable terms are small by Proposition \ref{prop: boundaryIntegrability} (boundary integrability) and vanish when we, once more, let $r\to0$ completing the proof.

\begin{theorem}[Uniqueness]\label{thm: uniquenessEntropySolutions}
Assume \eqref{Omegaassumption}--\eqref{muassumption}. Let $u,v$ be entropy solutions of \eqref{E} in the sense of Definition \ref{def: entropySolutionVovelleMethod} with initial data $u_0,v_0$ and exterior data $u^c=v^c$. Then for a.e.~$t\in (0,T)$ we have
\begin{align*}
    \int_{\Omega}|u(t,x)-v(t,x)|\dd x \leq \int_{\Omega}|u_0(x)-v_0(x)|\dd x.
\end{align*}
In particular, if also $u_0=v_0$, then $u=v$ a.e.~in $(0,T)\times\R^d$.
\end{theorem}

\begin{remark}
We can pick the \textit{same} $C^2$-extension for both $u^c$ and $v^c$, i.e., $\overline{u}^c=\overline{v}^c$. This is unproblematic, as all the a priori calculations of Section \ref{sec: propertiesOfEntropySolutions} can be carried out for any smooth extension.
\end{remark}

\begin{proof}[Proof of Theorem \ref{thm: uniquenessEntropySolutions}]

\noindent\textbf{1)} \emph{Doubling the variables.} 
We begin with the standard approach of doubling the variables: Consider $u$ and $v$ functions in $(t,x)$ and $(s,y)$ respectively, and let $0\leq \varphi\in C_c^\infty(Q_x\times Q_y)$, where the sub-index notation is as introduced in Section \ref{sec:AssumptionsConceptMain}. Considering $(s,y)\in M_y$ fixed, we may in the entropy inequalities \eqref{eq: entropyInequalityVovelleMethod} for $u$ let the constant be replaced with $v(s,y)$ and the test function replaced with $(t,x)\mapsto \varphi(t,x,s,y)$. Combining the $(+)$ and $(-)$ case, we obtain 
\begin{equation}\label{eq: uniquenessEntropyInequlaityInX}
\begin{split}
         -&\,\int_{Q_x} |u-v|\partial_t\varphi + F(u,v)\cdot\nabla_x \varphi\\ 
         - &\,\int_{M_x}\L^{\geq r}_{x}[b(u)]\sgn(u-v)\varphi+|b(u)-b(v)|\L^{<r}_x[\varphi]\leq 0,
\end{split}
\end{equation}
where we for brevity suppress the differentials $\dd t\dd x$ and have added sub-indexes to clarify in which variables the operators act. As \eqref{eq: uniquenessEntropyInequlaityInX} holds for all $(s,y)\in M_y$, it can be integrated over this set, so that we obtain
\begin{equation}\label{eq: uniquenessEntropyInequlaityInXIntegrated}
\begin{split}
         -&\,\int_{Q_x\times Q_y} |u-v|\partial_t\varphi + F(u,v)\cdot\nabla_x \varphi\\ 
         - &\,\int_{M_x\times M_y}\L^{\geq r}_{x}[b(u)]\sgn(u-v)\varphi+|b(u)-b(v)|\L^{<r}_x[\varphi]\leq 0,
\end{split}
\end{equation}
where we for the first integral used that $\varphi$ is supported in $Q_x\times Q_y$. Swapping the role of $u$ and $v$ (and the role of $(t,x)$ and $(s,y)$) we get the analogous inequality
\begin{equation}\label{eq: uniquenessEntropyInequlaityInYIntegrated}
\begin{split}
         -&\,\int_{Q_x\times Q_y} |v-u|\partial_s\varphi + F(v,u)\cdot\nabla_y \varphi\\ 
         - &\,\int_{M_x\times M_y}\L^{\geq r}_{y}[b(v)]\sgn(v-u)\varphi+|b(v)-b(u)|\L^{<r}_y[\varphi]\leq 0.
\end{split}
\end{equation}
With $\L_{x+y}$ as defined in \eqref{eq: theXPlussYOperators}, we have $\L_{x+y}^{\geq r}[b(u)-b(v)]=\L^{\geq r}_x[b(u)]-\L^{\geq r}_y[b(v)]$, and so by adding \eqref{eq: uniquenessEntropyInequlaityInXIntegrated} to \eqref{eq: uniquenessEntropyInequlaityInYIntegrated} we obtain
\begin{equation}\label{eq: uniquenessSumOfIntegratedEntropyInequalities}
\begin{split}
         -&\,\int_{Q_x\times Q_y} |u-v|(\partial_t+\partial_s)\varphi + F(u,v)\cdot(\nabla_x+\nabla_y) \varphi\\ 
         - &\,\int_{M_x\times M_y}\L^{\geq r}_{x+y}[b(u)-b(v)]\sgn(u-v)\varphi+|b(u)-b(v)|(\L^{<r}_{x}+\L^{<r}_{y})[\varphi]\leq 0,
\end{split}
\end{equation}
where we used that $F(u,v)=F(v,u)$ and that $\sgn(u-v)=-\sgn(v-u)$. Exploiting the second inequality of Corollary \ref{cor:ConvexInequalityNonlocalOperators} with the self-adjointness of $\L^{\geq r}_{x+y}$ (Proposition \ref{prop: integrationByPartsFormula}) we see that
\begin{align*}
    \int_{M_x\times M_y}\L^{\geq r}_{x+y}\big[b(u)-b(v)\big]\sgn(u-v)\varphi\leq&\, \int_{M_x\times M_y}\L^{\geq r}_{x+y}\big[|b(u)-b(v)|\big]\varphi\\
    =&\, \int_{M_x\times M_y}|b(u)-b(v)|\L^{\geq r}_{x+y}[\varphi].
\end{align*}
Using this in \eqref{eq: uniquenessSumOfIntegratedEntropyInequalities} and further letting $r\to0$, we conclude
\begin{equation}\label{eq: uniquenessSumOfIntegratedEntropyInequalities++}
\begin{split}
         -\int_{Q_x\times Q_y} \Bigg(|u-v|(\partial_t+\partial_s)\varphi + F(u,v)\cdot(\nabla_x+\nabla_y) \varphi\Bigg)
         - \int_{M_x\times M_y}\Bigg(|b(u)-b(v)|\L_{x+y}[\varphi]\Bigg)\leq 0.
\end{split}
\end{equation}
\smallskip
\noindent\textbf{2)} \emph{Going to the boundary.} 
With a slight abuse of notation, we let $r>0$ represent a new arbitrary constant so that we may (again) split up the operator $\L_{x+y}= \L^{\geq r}_{x+y}+\L^{< r}_{x+y}$. By Proposition \ref{prop: integrationByPartsFormula} we may shift $\L_{x+y}^{<r}$ to $\B^{<r}_{x+y}$ so to get
\begin{align*}
   - \int_{M_x\times M_y}|b(u)-b(v)|\L^{<r}_{x+y}[\varphi] = \int_{M_x\times M_y} \B^{<r}_{x+y}\Big[|b(u)-b(v)|,\varphi\Big].
\end{align*}
The integral on the right-hand side is well-defined because of Proposition \ref{prop: absolutelyIntegrableBInTheDoubleVariableScenario} and the local energy of $b(u)$ and $b(v)$ (see Remark \ref{remark: energyEstimateImpliesLocalBoundedEnergyOfBU}). Using this relation in \eqref{eq: uniquenessSumOfIntegratedEntropyInequalities++} and rearranging, we get
\begin{equation}\label{eq: uniquenessSumOfIntegratedEntropyInequalitiesRearranged}
\begin{split}
         &\,-\int_{Q_x\times Q_y} |u-v|(\partial_t+\partial_s)\varphi - \int_{M_x\times M_y}|b(u)-b(v)|\L_{x+y}^{\geq r}[\varphi]\\ 
         \leq &\,\int_{Q_x\times Q_y} F(u,v)\cdot(\nabla_x+\nabla_y) \varphi -\int_{M_x\times M_y}\B^{<r}_{x+y}\Big[|b(u)-b(v)|,\varphi\Big].
\end{split}
\end{equation}
Next, we set
\begin{align*}
    \varphi(t,x,s,y)=\z_\d(x)\z_\d(y)\rho(t-s,x-y)\a(t),
\end{align*}
where $(\z_\d)_{\d>0}$ is an inner boundary layer sequence satisfying $\sup_{\d>0}\|\nabla\z_\d\|_{L^1(\Omega)}<\infty$ (cf. \eqref{eq: generalizedBoundaryLayerSequence}), and where both $\rho\in C_c^\infty(\R\times\R^d)$ and $\a\in C_c^\infty((0,T))$ are nonnegative. We assume that the support of $\rho$ is contained in a sufficiently small ball around zero so that $\varphi$ is indeed compactly supported in $Q_x\times Q_y$. For brevity, we shall write $\z_\d^x$, $\z_\d^{y}$, $\rho$, $\alpha$ to mean the expressions $\z_\d(x)$, $\z_\d(y)$, $\rho(t-s,x-y)$, $\a(t)$ respectively. Inserting for $\varphi$ in \eqref{eq: uniquenessSumOfIntegratedEntropyInequalitiesRearranged} we get
\begin{equation}\label{eq: uniquenessFirstInequality}
    \begin{split}
         &\,-\int_{Q_x\times Q_y} |u-v|\z_\d^x\z_\d^y\rho\a' - \int_{M_x\times M_y}|b(u)-b(v)|\L^{\geq r}_{x+y}\Big[\z_\d^x\z_\d^y\Big]\rho\a\\
    \leq&\, \int_{Q_x\times Q_y}F(u,v)\cdot(\nabla_x\z_\d^x)\z_\d^y \rho\a -\int_{M_x\times M_y}\B^{<r}_{x,x+y}\Big[|b(u)-b(v)|,\z_\d^x\z_\d^y\Big]\rho\a\\
    &\,+\int_{Q_x\times Q_y}F(u,v)\cdot(\nabla_y\z_\d^y)\z_\d^x\rho\a -\int_{M_x\times M_y}\B^{<r}_{y,x+y}\Big[|b(u)-b(v)|,\z_\d^x\z_\d^y\Big]\rho\a,
    \end{split} 
\end{equation}
where we used the cross-terms formula for $\B^{<r}_{x+y}$ from Proposition \ref{prop: CarrilloStyleCross-Terms} (the resulting integrals are, again,  well defined by Proposition \ref{prop: absolutelyIntegrableBInTheDoubleVariableScenario}), and moved $\rho$ outside the nonlocal operators since it is invariant under translations $(x,y)\mapsto(x+z,y+z)$.


We now wish to let $\d\to0$; observe that the left-hand side of \eqref{eq: uniquenessFirstInequality} has a well-defined limit as $\L_{x+y}^{\geq r}$ is a zero order operator. Thus, letting $\d\to 0$ we obtain
\begin{equation}\label{eq: uniquenessSecondInequality}
    \begin{split}
            &\,-\int_{Q_x\times Q_y} |u-v|\rho\a' - \int_{M_x\times M_y}|b(u)-b(v)|\L^{\geq r}_{x+y}\Big[\cha^x\cha^y\Big]\rho\a\leq \lim_{\d\to 0}(I_u + I_v),
    \end{split}
\end{equation}
where $\cha^x$ and $\cha^y$ denote the characteristic function $\cha$ as a function in $x$ and $y$ respectively and where $I_u$ and $I_v$ denote the first and second line on the right-hand side of \eqref{eq: uniquenessFirstInequality}. That $I_u+I_v$ admits a limit as $\d\to0$ will be proved by breaking it up into eighth appropriate terms. 

We begin by exploiting the product rule given by Lemma \ref{aSpecialProductRule} to see that $I_u$ may be rewritten
\begin{align*}
    I_u= &\,\int_{Q_x\times Q_y}F(u,v)\cdot(\nabla_x\z_\d^x) \z_\d^y\rho\alpha -\int_{M_x\times M_y}\B_x^{<r}\Big[|b(u)-b(v)|,\z_\d^x\Big]\z_\d^y\rho\alpha\\
     &\,-\int_{M_x\times M_y}|b(u)-b(v)|\Big(\B_y^{<r}\Big[\z_\d^y,\rho\Big]\z_\d^x-\B_{x,y}^{<r}\Big[\z_\d^x,\z_\d^y\Big]\rho\Big)\alpha,
\end{align*}
and by adding and subtracting terms we further obtain
\begin{equation}\label{eq: rewritingIX}
    \begin{split}
        I_u= &\,\int_{Q_x\times Q_y}\big(\F_u\cdot\nabla_x\z_\d^x\big) \z_\d^y\rho\alpha- \int_{M_x\times M_y}\B^{<r}_x\Big[\Sigma_u,\z_\d^x\Big]\z_\d^y\rho\alpha\\
    &\,-\int_{M_x\times M_y}|b(u)-b(v)|\Big(\B_y^{<r}\Big[\z_\d^y,\rho\Big]\z_\d^x-\B_{x,y}^{<r}\Big[\z_\d^x,\z_\d^y\Big]\rho\Big)\alpha\\
    &\,-\int_{Q_x\times Q_y}\big(F(u,\overline{u}^c)-F(v,\overline{u}^c)\big)\cdot\big(\nabla_x\z_\d^x\big) \z_\d^y\rho\alpha\\
    &\,+\int_{M_x\times M_y}\B^{<r}_x\Big[|b(u)-b(\overline{u}^c)|-|b(v)-b(\overline{u}^c)|,\z_\d^x\Big]\z_\d^y\rho\alpha,
    \end{split}
\end{equation}
where $\F_u = \F(u(t,x),v(s,y),\overline{u}^c(t,x))$, and $\Sigma_u = \Sigma(u(t,x),v(s,y),\overline{u}^c(t,x))$, and with $\Sigma,\F$ as in \eqref{eq: KruzkovBoundaryEntropy-EntropyFlux}. Analogously, we may write $I_v$ as
\begin{equation}\label{eq: rewritingIY}
    \begin{split}
        I_v= &\,\int_{Q_x\times Q_y}\big(\F_v\cdot\nabla_y\z_\d^y\big) \z_\d^x\rho\alpha- \int_{M_x\times M_y}\B^{<r}_y\Big[\Sigma_v,\z_\d^y\Big]\z_\d^x\rho\alpha\\
    &\,-\int_{M_x\times M_y}|b(u)-b(v)|\Big(\B_x^{<r}\Big[\z_\d^x,\rho\Big]\z_\d^y-\B_{x,y}^{<r}\Big[\z_\d^x,\z_\d^y\Big]\rho\Big)\alpha\\
    &\,-\int_{Q_x\times Q_y}\big(F(v,\overline{v}^c)-F(u,\overline{v}^c)\big)\cdot\big(\nabla_y\z_\d^y\big) \z_\d^x\rho\alpha\\
    &\,+\int_{M_x\times M_y}\B^{<r}_y\Big[|b(v)-b(\overline{v}^c)|-|b(u)-b(\overline{v}^c)|,\z_\d^y\Big]\z_\d^x\rho\alpha,
    \end{split}
\end{equation}
where $\F_v = \F(v(s,y),u(t,x),\overline{v}^c(s,y))$, and $\Sigma_v = \Sigma(v(s,y),u(t,x),\overline{v}^c(s,y))$. Note that, although $\overline{u}^c$ and $\overline{v}^c$ coincides as functions, they depend on the variables $(t,x)$ and $(s,y)$ respectively. Finally, by grouping terms from \eqref{eq: rewritingIX} and \eqref{eq: rewritingIY} we can further write
\begin{align}\label{eq: sumOfIXandIYisSumOfAllJs}
    I_u+I_v=J_u^0+J_v^0+J_u^1+J_v^1+J_u^2+J_v^2+J_u^3+J_v^3,
\end{align}
where we have introduced
\begin{align*}
    J_u^0&\coloneqq \int_{Q_x\times Q_y}\big(\F_u\cdot\nabla_x\z_\d^x\big) \z_\d^y\rho\alpha- \int_{M_x\times M_y}\B^{<r}_x\Big[\Sigma_u,\z_\d^x\Big]\z_\d^y\rho\alpha,\\
    J_v^0&\coloneqq \int_{Q_x\times Q_y}\big(\F_v\cdot\nabla_y\z_\d^y\big) \z_\d^x\rho\alpha- \int_{M_x\times M_y}\B^{<r}_y\Big[\Sigma_v,\z_\d^y\Big]\z_\d^x\rho\alpha,\\
    J_u^1&\coloneqq -\int_{Q_x\times Q_y}\big(F(u,\overline{u}^c)\cdot\nabla_x\z_\d^x\big) \z_\d^y\rho\alpha+ \int_{M_x\times M_y}\B^{<r}_x\Big[|b(u)-b(\overline{u}^c)|,\z_\d^x\Big]\z_\d^y\rho\alpha,\\
    J_v^1&\coloneqq -\int_{Q_x\times Q_y}\big(F(v,\overline{v}^c)\cdot\nabla_y\z_\d^y\big)\z_\d^x\rho\alpha + \int_{M_x\times M_y}\B^{<r}_y\Big[|b(v)-b(\overline{v}^c)|,\z_\d^y\Big]\z_\d^x\rho\alpha,\\
     J_u^2&\coloneqq \int_{Q_x\times Q_y}\big(F(u,\overline{v}^c)\cdot\nabla_y\z_\d^y\big)\z_\d^x\rho\alpha-\int_{M_x\times M_y}|b(u)-b(v)|\B_y^{<r}\Big[\z_\d^y,\rho\Big]\z_\d^x\alpha,\\
    J_v^2&\coloneqq \int_{Q_x\times Q_y}\big(F(v,\overline{u}^c)\cdot\nabla_x\z_\d^x\big)\z_\d^y\rho\alpha - \int_{M_x\times M_y}|b(u)-b(v)|\B_x^{<r}\Big[\z_\d^x,\rho\Big]\z_\d^y\alpha,\\
    J_u^3&\coloneqq \int_{M_x\times M_y}\Big(|b(u)-b(v)|\B_{x,y}^{<r}\Big[\z_\d^x,\z_\d^y\Big]-\B^{<r}_y\Big[|b(u)-b(\overline{v}^c)|,\z_\d^y\Big]\z_\d^x\Big)\rho\alpha,\\ 
    J_v^3&\coloneqq \int_{M_x\times M_y}\Big(|b(u)-b(v)|\B_{x,y}^{<r}\Big[\z_\d^x,\z_\d^y\Big]-\B^{<r}_x\Big[|b(v)-b(\overline{u}^c)|,\z_\d^x\Big]\z_\d^y\Big)\rho\alpha.
\end{align*}
The limit as $\d\to0$ of each of these terms will now be examined.

\smallskip
\noindent\textbf{3)} \emph{The limit of $J_u^0$ and $J_v^0$.} 
We will not compute the exact value of these limits (though they can be derived from the proof of Proposition \ref{prop: boundaryConditionEntropySolution}). We only demonstrate that they are non-positive which suffices for our purpose. Starting with $J_u^0$, we observe that it can be written
\begin{align*}
   J_u^0=\int_{Q_y} \Bigg[\int_{Q_x}\big(\F_u\cdot\nabla_x\z_\d^x\big)\rho\alpha- \int_{M_x}\B^{<r}_x\Big[\Sigma_u,\z_\d^x\Big]\rho\alpha\Bigg]\z_\d^y.
\end{align*}
By the boundary condition for $u$ (Proposition \ref{prop: boundaryConditionEntropySolution}), the expression inside the $\lim$ is uniformly bounded on $Q_y$ as $\d\to 0$ and it admits a nonnegative limit. Applying dominated convergence, we conclude that
\begin{align*}
    \lim_{\d\to0}J_u^0 \leq 0.
\end{align*}
An analogous calculation gives the corresponding result $\lim_{\d\to0}J_v^0\leq 0$.

\smallskip
\noindent\textbf{4)} \emph{The limit of $J_u^1$ and $J_v^1$.} 
Starting with $J_u^1$, we again do a little rewriting to see that 
\begin{align*}
   J_u^1= \int_{Q_y}\Bigg[\int_{M_x}\Big(-F(u,\overline{u}^c)\cdot\nabla_x\z_\d^x + \B^{<r}_x\Big[|b(u)-b(\overline{u}^c)|,\z_\d^x\Big]\Big)\rho\a \Bigg]\z_\d^y ,
\end{align*}
By the weak trace result (Proposition \ref{prop: weakTrace}), the expression inside the $\lim$ is uniformly bounded on $Q_y$ as $\d\to0$ and it admits a limit given by said proposition. Thus, dominated convergence gives us
\begin{align*}
\lim_{\d\to0} J^1_u= L_u^1\coloneqq &\,-\int_{\Gamma_x\times Q_y} \rho\a \dd \nu_u   + \int_{M_x\times Q_y} \B_x^{<r}\Big[|b(u)-b(\overline{u}^c)|,\cha^x\Big]\rho\a,
\end{align*}
where $\nu_u$ is a finite Borel measure on $\overline{\Gamma}_x$, but we integrate over $\Gamma_x$ as $\a$ is compactly supported in $(0,T)$. As before, we write $\cha^x$ to stress the $x$-dependence of the characteristic function.

Analogously, we also obtain 
\begin{align*}
\lim_{\d\to0} J_v^1= L_v^1\coloneqq  &\,-\int_{Q_x\times\Gamma_y} \rho\a \dd \nu_v   + \int_{Q_x\times M_y} \B_y^{<r}\Big[|b(v)-b(\overline{v}^c)|,\cha^y\Big]\rho\alpha.
\end{align*}

\smallskip 
\noindent\textbf{5)} \emph{The limit of $J_u^2$ and $J_v^2$.} As before, we start with $J_u^2$ and rewrite it as
\begin{align*}
 J_u^2 =&\,\int_{Q_x} \bigg(\int_{Q_y} \big(F(u,\overline{v}^c)\cdot\nabla_y\z_\d^y\big)\rho\a\bigg)\z_\d^x\\
 &\,-\int_{|z|<r}\Bigg( \int_{M_x\times M_y}|b(u)-b(v)|\big(\z_\d^y(y+z)-\z_\d^y(y)\big)\big(\rho(x-y-z)-\rho(x-y)\big)\a\z_\d^x\Bigg)\dd\mu(z),
\end{align*}
where we for the second integral used Fubini's theorem and wrote out only the arguments featuring translations in $z$.
For the first integral, we exploit the regularity of $\overline{v}^c$, $\rho$, $\a$ and $f$ to get
\begin{align*}
\lim_{\d\to0}\int_{Q_x} \bigg(\int_{Q_y} \big(F(u,\overline{v}^c)\cdot\nabla_y\z_\d^y\big)\rho\a\bigg)\z_\d^x = &\, -\int_{Q_x\times\Gamma_y} (F(u,\overline{v}^c)\cdot \hat{n})\rho\a,
\end{align*}
where $\hat{n}$ is an outward-pointing normal vector on $\po$. Here we used dominated convergence; this is justified as the inner integral is uniformly bounded over $Q_x$ and $\d>0$ (we recall that $\sup_{\d>0}\norm{\nabla\z_\d}{L^1(\Omega)}<\infty$). For the second part of $J_u^2$ we again wish to deploy dominated convergence; the inner integral has a canonical limit for fixed $z$, and for all $\d>0$ it is dominated by
\begin{align*}
    \norm{b(u)-b(v)}{L^\infty(M_x\times M_y)}\sup_{\d>0}\Big(\norm{\nabla\z_\d^y}{L^1(M_y)}|z|\wedge 2\norm{\z_\d^y}{L^1(M_y)}\Big)\Big(L_\rho|z|\wedge2\norm{\rho}{L^\infty(M_x\times M_y)}\Big)\norm{\a\z_\d^x}{L^1(M_x)},
\end{align*}
where $L_\rho$ is a Lipschitz constant for $\rho$ on $M_x\times M_y$. As this bound is less than $C(|z|^2\wedge 1)$, for an appropriate $C$, we infer from dominated convergence that the second part of $J_u^2$ admits the limit
\begin{align*}
    -\int_{|z|<r}\Bigg( \int_{M_x\times M_y}|b(u)-b(v)|\big(\cha^y(y+z)-\cha^y(y)\big)\big(\rho(x-y-z)-\rho(x-y)\big)\a\cha^x\Bigg)\dd\mu(z),
\end{align*}
as $\d\to0$. Fubini's theorem may again be applied to move the integral in $z$ inside, and so we conclude
\begin{align*}
   \lim_{\d\to0} J_u^2= L_u^2\coloneqq -\int_{Q_x\times \Gamma_y}\big(F(u,\overline{v}^c)\cdot \hat{n}\big)\rho\a - \int_{Q_x\times M_y}|b(u)-b(v)|\B^{<r}_{y}\Big[\cha^y,\rho\Big]\a,
\end{align*}
and by an analogous argument we also obtain
\begin{align*}
   \lim_{\d\to0} J_v^2= L_v^2\coloneqq -\int_{\Gamma_x\times Q_y}\big(F(v,\overline{u}^c)\cdot \hat{n}\big)\rho\a - \int_{M_x\times Q_y}|b(u)-b(v)|\B^{<r}_{x}\Big[\cha^x,\rho\Big]\a.
\end{align*}

\smallskip
\noindent\textbf{6)} \emph{The limit of $J_u^3$ and $J_v^3$.}
We begin by studying $J_u^3$. By Fubini's theorem, we rewrite $J_u^3$ as
\begin{align*}
   J_u^3 = &\,\int_{|z|<r}\Bigg(\int_{M_x\times M_y}\bigg[|b(u)-b(v)|\Big(\z_\d^x(x+z)-\z_\d^x(x)\Big)\Big(\z_\d^y(y+z)-\z_\d^y(y)\Big)\\ &\quad\quad\quad-\Big(|b(u)-b(\overline{v}^c(y+z))|-|b(u)-b(\overline{v}^c(y))|\Big)\Big(\z_\d^y(y+z)-\z_\d^y(y)\Big)\z_\d^x\bigg]\rho\a\Bigg) \dd\m(z),
\end{align*}
where only the arguments featuring translations in $z$ have been written out. For fixed $z$ it is obvious that the inner integral converges to its canonical limit as $\d\to0$. Moreover, the inner integral admits the following bound 
\begin{equation}\label{eq: boundOnInnerIntegralForJu3}
\begin{split}
     &|z|^2\Big(\norm{b(u)-b(v)}{L^\infty(M_x\times M_y)}\norm{\nabla\z_\d^x}{L^1(\R^d_x)}\norm{\nabla\z_\d^y}{L^1(\R^d_y)}\\
     &\quad\quad\quad+ L_{b(\overline{v}^c)}\norm{\nabla\z_\d^y}{L^1(\R^d_y)}\norm{\z_\d^x}{L^1(\R^d_x)}\Big)\norm{\rho\a}{L^\infty(M_x\times M_y)} \leq C|z|^2,
\end{split}
\end{equation}
where $L_{b(\overline{v}^c)}$ is a local Lipschitz constant of $b(\overline{v}^c)$ and where $C$ is some large constant independent of $\d>0$ as $\sup_\d\norm{\nabla\z_\d}{L^1(\R^d)}<\infty$ and $\sup_\d\norm{\z_\d}{L^1(\R^d)}\leq |\Omega|$. As this last bound is integrable on $|z|<r$ with respect to $\dd \mu$, it follows by dominated convergence that 
\begin{align*}
   \lim_{\d\to0} J_u^3 = L_u^3\coloneqq \int_{M_x\times M_y}\Big(|b(u)-b(v)|\B_{x,y}^{<r}\Big[\cha^x,\cha^y\Big]-\B^{<r}_y\Big[|b(u)-b(\overline{v}^c)|,\cha^y\Big]\cha^x\Big)\rho\a.
\end{align*}
And by an analogous argument, we also obtain
\begin{align*}
   \lim_{\d\to0} J_v^3 = L_v^3\coloneqq \int_{M_x\times M_y}\Big(|b(u)-b(v)|\B_{x,y}^{<r}\Big[\cha^x,\cha^y\Big]-\B^{<r}_x\Big[|b(v)-b(\overline{u}^c)|,\cha^x\Big]\cha^y\Big)\rho\a.
\end{align*}

\smallskip
\noindent\textbf{7)} \emph{Going to the diagonal.}
In summary, we have so far established the following inequality
\begin{equation}\label{eq: uniquenessThirdInequality}
    \begin{split}
            &\,-\int_{Q_x\times Q_y} |u-v|\rho\a' - \int_{M_x\times M_y}|b(u)-b(v)|\L^{\geq r}_{x+y}[\cha^x\cha^y]\rho\a\\
            \leq&\, L_u^1+L_v^1+L_u^2+L_v^2+L_u^3+L_v^3,
    \end{split}
\end{equation}
which follows from \eqref{eq: uniquenessSecondInequality} and \eqref{eq: sumOfIXandIYisSumOfAllJs} together with the above established limits. Observe that we have chosen to omit $L_u^0$ and $L_v^0$ from the right-hand side as these terms are non-positive. We now wish to insert a more explicit expression for $\rho$. For two small fixed parameters $\epsilon,\e>0$ we set
\begin{align*}
    \rho(t-s,x-y)=\theta_\epsilon(t-s)\tilde{\rho}_\e(x-y),
\end{align*}
where $\theta_\epsilon$ and $\rho_\e$ are standard mollifiers in $\R$ and $\R^d$ respectively. More precisely, we have $\theta_\epsilon=\epsilon^{-1}\theta(\cdot/\epsilon)$ and $\tilde{\rho}_\e=\e^{-d}\tilde{\rho}(\cdot/\e)$, where $\theta$ and $\tilde{\rho}$ are positive, smooth and symmetric functions supported in the unit ball of $\R$ and $\R^d$ that additionally satisfy $\int_\R\theta \dd t =\int_{\R^d}\tilde{\rho} \dd x =1$. For notational simplicity, we shall write $\rho$ instead of $\tilde{\rho}$, and the abbreviations $\theta_\epsilon$ and $\rho_\e$ to mean the expressions $\theta_\epsilon(t-s)$ and $\rho_\e(x-y)$. If we on the left-hand side of \eqref{eq: uniquenessThirdInequality} replace the (old) $\rho$ with the new expression $\theta_\epsilon\rho_\e$ we get 
\begin{align}\label{eq: theLefthandSideOfTheThirdInequality}
    -\int_{Q_x\times Q_y} |u(t,x)-v(s,y)|\theta_\epsilon\rho_\e\a' - \int_{M_x\times M_y}|b(u(t,x))-b(v(s,y))|\L^{\geq r}_{x+y}[\cha^x\cha^y]\theta_\epsilon\rho_\e\a,
\end{align}
where the arguments of $u$ and $v$ have been included to clarify the next computation.
We wish to \textit{go to the diagonal}, i.e.~to let $\e,\epsilon\to0$. This task is fairly straightforward for the above expression. Indeed, subtracting
\begin{align}\label{eq: theLimitOfTheLeftHandSideOfTheThirdInequality}
    -\int_{Q_x\times Q_y}|u(t,x)-v(t,x)|\theta_\epsilon\rho_\e\a' - \int_{M_{x}\times M_y}|b(u(t,x))-b(v(t,x))|\L_{x}^{\geq r}[\cha^x]\theta_\epsilon\rho_\e\a,
\end{align}
from \eqref{eq: theLefthandSideOfTheThirdInequality} and making the substitution of variables $s=t + \tau$ and $y=x+\xi$, a (mostly) standard calculation shows that the result is bounded by
\begin{align*}
    &\sup_{\substack{|\tau|<\epsilon\\
    |\xi|<\e}}\norm{v(\cdot +\tau,\cdot+\xi)-v}{L^1(Q_y)}\norm{\a'}{L^\infty((0,T))}\\ 
   + &\sup_{\substack{|\tau|<\epsilon\\
    |\xi|<\e}}\norm{b(v)(\cdot +\tau,\cdot+\xi)-b(v)}{L^1(Q_y)}2C_r\norm{\cha^x}{L^\infty(M_x)}\norm{\alpha}{L^\infty(M_x)}\\
   + &\norm{b(u)-b(v)}{L^\infty(M_x\times M_y)}2C_r\sup_{\substack{
    |\xi|<\e}}\norm{\cha^x(\cdot)(\cha^x(\cdot+\xi)-1)}{L^1(M_x)}\norm{\alpha}{L^\infty((0,T))},
\end{align*}
where $C_r=\mu(\{|z|\geq r\})$. By translation regularity of $L_{\textup{loc}}^1$-functions, this last bound tends to zero as $\e,\epsilon\to0$. In particular, the limit of \eqref{eq: theLefthandSideOfTheThirdInequality} as $\e,\epsilon\to 0$ coincides with that of \eqref{eq: theLimitOfTheLeftHandSideOfTheThirdInequality} which is given by
\begin{align}\label{eq: cleanerFormOfLimitOfLefthandSideOfThirdInequality}
    -\int_{Q} |u(t,x)-v(t,x)|\a'(t)\dd x\dd t - \int_{M}|b(u(t,x))-b(v(t,x))|\L^{\geq r}[\cha](x)\a(t)\dd x\dd t.
\end{align}
Observe next that the second integral in \eqref{eq: cleanerFormOfLimitOfLefthandSideOfThirdInequality} is non-positive. This follows as $|b(u(t,x))-b(v(t,x))|$ is nonnegative and supported in $Q$ (since $u^c=v^c$ a.e.~in $Q^c$), while $\L^{\geq r}[\cha]$ is non-positive in $Q$ due to $\cha$ taking its maximum in $Q$. We conclude then from \eqref{eq: uniquenessThirdInequality} and the above analysis that
\begin{equation}\label{eq: uniquenessFourthInequality}
    \begin{split}
            &\,-\int_{Q} |u(t,x)-v(t,x)|\a'(t)\dd x\dd t\leq  \lim_{r\to0}\limsup_{\e\to 0}\lim_{\epsilon\to0}\bigg[L_u^1+L_v^1+L_u^2+L_v^2+L_u^3+L_v^3\bigg].
    \end{split}
\end{equation}
where the choice of taking $\limsup$ in $\e$ is for brevity sake: While the intermediate $\e$-limit does in fact exist for all the coming terms, several of these limits further vanish as $r\to0$. Thus, it will often be sufficient (and much simpler) to instead obtain appropriate bounds on $\limsup_{\e\to0}$ for these terms. 

The remainder of the proof is to show that the right-hand side of \eqref{eq: uniquenessFourthInequality} is non-positive.
 For clarity, we restate the definitions of the right-hand-side terms, with the old $\rho$ replaced by $\theta_\epsilon\rho_\e$. They are
\begin{align*}
L_u^1= &\,-\int_{\Gamma_x\times Q_y} \theta_\epsilon\rho_\e\a \dd \nu_u   + \int_{M_x\times Q_y} \B_x^{<r}\Big[|b(u)-b(\overline{u}^c)|,\cha^x\Big]\theta_\epsilon\rho_\e\a,\\
L_v^1= &\,-\int_{Q_x\times \Gamma_y} \theta_\epsilon\rho_\e \a\dd \nu_v   + \int_{Q_x\times M_y} \B_y^{<r}\Big[|b(v)-b(\overline{v}^c)|,\cha^y\Big]\theta_\epsilon\rho_\e\a,\\
L_u^2=&\,-\int_{Q_x\times \Gamma_y}\big(F(u,\overline{v}^c)\cdot \hat{n}\big)\theta_\epsilon\rho_\e\a - \int_{Q_x\times M_y}|b(u)-b(v)|\B^{<r}_{y}\Big[\cha^y,\rho_\e\Big]\theta_\epsilon\a,\\
 L_v^2=&\,-\int_{\Gamma_x\times Q_y}\big(F(v,\overline{u}^c)\cdot \hat{n}\big)\theta_\epsilon\rho_\e\a - \int_{M_x\times Q_y}|b(u)-b(v)|\B^{<r}_{x}\Big[\cha^x,\rho_\e\Big]\theta_\epsilon\a,\\
L_u^3= &\,\int_{M_x\times M_y}\Big(|b(u)-b(v)|\B_{x,y}^{<r}\Big[\cha^x,\cha^y\Big]-\B^{<r}_y\Big[|b(u)-b(\overline{v}^c)|,\cha^y\Big]\cha^x\Big)\theta_\epsilon\rho_\e\a,\\
    L_v^3= &\,\int_{M_x\times M_y}\Big(|b(u)-b(v)|\B_{x,y}^{<r}\Big[\cha^x,\cha^y\Big]-\B^{<r}_x\Big[|b(v)-b(\overline{u}^c)|,\cha^x\Big]\cha^y\Big)\theta_\epsilon\rho_\e\a.
\end{align*}
Because the right-hand side of \eqref{eq: uniquenessFourthInequality} is bounded by the sum of the term-wise limits, that is
\begin{equation}\label{eq: justifyingTheIndividualLimitsOfTheLTerms}
    \lim_{r\to0}\limsup_{\e\to 0}\lim_{\epsilon\to0}\bigg[L_u^1+L_v^1+L_u^2+L_v^2+L_u^3+L_v^3\bigg]\leq \sum_{\substack{k=1,2,3\\
    w=u,v}}\lim_{r\to0}\limsup_{\e\to 0}\lim_{\epsilon\to0}L_w^k,
\end{equation}
we proceed by studying the limit of each $L$-terms, as was similarly done for the $J$-terms. 

\smallskip
\noindent\textbf{8)} \emph{The limit of $L_u^1$ and $L_v^1$.}
Starting with $L_u^1$, we introduce
\begin{align}\label{eq: definitionOfVarrho}
     \varrho_\e(x) &\coloneqq \int_{\Omega}\rho_\e(x-y)\dd y.
\end{align}
Note that $\varrho_\e$ is smooth, has compact support, and is bounded between $0$ and $1$ on $\R^d$. Moreover, it satisfies
\begin{align}\label{eq: varrhoTendsToAHalfAtTheBoundary}
    \lim_{\e\to0}\varrho_\e(x)=\begin{cases}
    1,\quad x\in\Omega,\\
    \tfrac{1}{2},\quad x\in\po,\\
    0,\quad x\in\Omega^c\setminus \po,
    \end{cases}
\end{align}
by the regularity of the boundary. With this notation, we let $\epsilon\to0$ and find that $L_u^1$ can be written
\begin{align}\label{eq: cleanerWayToWriteLu1}
    \lim_{\epsilon\to0}L_u^1=-\int_{\Gamma_x}\varrho_\e \a\dd \nu_u   + \int_{M_x} \B_x^{<r}\Big[|b(u)-b(\overline{u}^c)|,\cha^x\Big]\varrho_\e\a.
\end{align}
where we used that $\a$ has compact support in $(0,T)$  and that 
$\lim_{\epsilon\to0}\int_0^T\theta_\epsilon(t-s)\dd s =1$ for all $t\in(0,T)$.  By Corollary \ref{cor: extraRegularityOfEntropySolution}, the expression $\B^{<r}_x\big[|b(u)-b(\overline{u}^c)|,\cha^x\big]$ is well defined in $L^1(M_x)$ and must consequently vanish (in $L^1$ sense) as $r\to 0$.  We conclude from \eqref{eq: varrhoTendsToAHalfAtTheBoundary}, \eqref{eq: cleanerWayToWriteLu1} and dominated convergence that 
\begin{align*}
    \lim_{r\to0}\limsup_{\e\to0}\lim_{\epsilon\to0} L_u^1 = -\frac{1}{2}\int_{\Gamma}\a \dd \nu_u.
\end{align*}
For $L_v^1$ the analysis is analogous, except for the small difference that $\a$ goes from being a function in $t$ to one in $s$ as $\epsilon\to 0$. This leads to the analogous conclusion
\begin{align*}
    \lim_{r\to0}\limsup_{\e\to0}\lim_{\epsilon\to0} L_v^1 = -\frac{1}{2}\int_{\Gamma}\a \dd \nu_v.
\end{align*}

\smallskip
\noindent\textbf{9)} \emph{The limit of $L_u^2$ and $L_v^2$.}
 As we shall see, these two limits are the opposite of (and thus cancel out) the previous two limits. However, they are far more laborious to compute.
 
 Starting with $L_u^2$, we first note that the limit $\epsilon\to0$ is standard, giving
\begin{equation}\label{eq: firstLimitOfL2}
    \begin{split}
            \lim_{\epsilon\to0}L_u^2
            = &\,-\int_0^T\int_{\Omega_x\times \po_y}\big(F(u,\overline{v}^c)\cdot \hat{n}\big)\a\rho_\e \dd x\dd \sigma(y)\dd t\\
            &\,- \int_0^T\int_{\Omega_x\times \R^d_y}|b(u)-b(v)|\B^{<r}_{y}\Big[\cha^y,\rho_\e\Big]\a \dd x\dd y\dd t.
    \end{split}
\end{equation}
where $v$ and $\bv$ on the right-hand side of \eqref{eq: firstLimitOfL2} are functions in $(t,y)$. We wish to replace $F(u(t,x),\bv(t,y))$ with $F(u(t,x),\bu(t,x))$ in the first integral on the right-hand side of \eqref{eq: firstLimitOfL2}. To do so, we recall that $\bv$ is the same function as $\bu$ (but in different variables) so we may compute
\begin{equation}\label{eq: differenceOfFsIsOfSizeEpsilon}
    \begin{split}
            &\,\bigg|\int_0^T\int_{\Omega_x\times\po_y} \big(F(u,\overline{v}^c)-F(u,\overline{u}^c)\big)\cdot\hat{n} \a\rho_\e\dd x\dd \sigma(y) \dd t\bigg|\\
    \leq&\,
    L_f\int_0^T\int_{\Omega_x\times\po_y} |\bv(t,y)-\bu(t,x)|\a(t)\rho_\e(x-y)\dd x\dd \sigma(y) \dd t\\
    \leq&\,L_fL_{\bu}T\norm{\a}{L^\infty((0,T))}\int_{\Omega_x\times\po_y} |x-y|\rho_\e(x-y)\dd x\dd \sigma(y),
    \end{split}
\end{equation}
where $L_f$ and $L_{\bu}$ are local Lipschitz constants of $f$ and $\bu$. Using that $\int_{\R^d}|x-y|\rho_\e(x-y)\dd x \leq \e$ we note that the right-most side of \eqref{eq: differenceOfFsIsOfSizeEpsilon} is of size $\lesssim \e$. Moreover, by the divergence theorem we have the identity
\begin{align*}
    -\int_{\po_y}\hat{n}\rho_{\e} \dd \sigma(y)= \int_{\Omega_y} -\nabla_y \rho_\e\dd y= \int_{\Omega_y} \nabla_x \rho_\e\dd y=\nabla_x \varrho_\e,
\end{align*}
where $\varrho_\e$ is as in \eqref{eq: definitionOfVarrho}. Summarizing, the first part of \eqref{eq: firstLimitOfL2} can be written
\begin{align}\label{eq: firstPartOfL2CanBeTurnedIntoTrace}
    -\int_0^T\int_{\Omega_x\times\po_y}\big(F(u,\overline{v}^c)\cdot\hat{n}\big)\a\rho_\e \dd x\dd \sigma(y)\dd t= \int_{Q_x}F(u,\overline{u}^c)\cdot\nabla_x\varrho_\e\a\dd x\dd t + O(\e).
\end{align}
Next, we shall find an analogous representation for the second integral in \eqref{eq: firstLimitOfL2}. For brevity, we will in the coming analysis ignore $\a$ and the integral in $t$ and suppress the $t$-dependence of the remaining expressions. We begin by observing that
\begin{equation}\label{eq: secondPartOfL2CanBeTurnedIntoTrace}
    \begin{split}
          &\Bigg|\int_{\Omega_x\times\R^d_y}\Big(|b(u)-b(v)|-|b(u)-b(\overline{u}^c)|\Big)\B^{<r}_{y}\Big[\cha^y,\rho_\e\Big]\dd x\dd y\Bigg|\\
    \leq &\, \int_{\Omega_x\times\R^d_y}\Big|\big(b(v)-b(\overline{v}^c)\big)\B^{<r}_{y}\Big[\cha^y,\rho_\e\Big]\Big|\dd x\dd y\\
    &\quad+ \int_{\Omega_x\times\R^d_y}\Big|\big(b(\overline{u}^c)-b(\overline{v}^c)\big)\B^{<r}_{y}\Big[\cha^y,\rho_\e\Big]\Big|\dd x\dd y.
    \end{split}
\end{equation}
We show that both expressions on the right-hand side of \eqref{eq: secondPartOfL2CanBeTurnedIntoTrace} vanish uniformly in $\e$ as $r\to0$. For the first one, we exploit that $|b(v)-b(\overline{v}^c)|$ is zero for $y\in\Omega^c$ and obtain 
\begin{align*}
      &\,\int_{\Omega_x\times\R^d_y}\Big|\big(b(v)-b(\overline{v}^c)\big)\B^{<r}_{y}\Big[\cha^y,\rho_\e\Big]\Big|\dd x\dd y\\ \leq&\,\int_{\substack{x,y\in\Omega\\y+z\in\Omega^c\\ |z|<r}}\frac{1}{2}|b(v(y))-b(\overline{v}^c(y))||\rho_\e(x-y-z)-\rho_\e(x-y)|\dd x\dd y\dd \mu(z)\\
      \leq&\,\int_{\substack{y\in\Omega\\y+z\in\Omega^c\\ |z|<r}}|b(v(y))-b(\overline{v}^c(y))|\dd y\dd \mu(z)\to 0,\quad r\to0,
\end{align*}
where the limit holds by the boundary integrability of Proposition \ref{prop: boundaryIntegrability}. The second expression on the right-hand side of \eqref{eq: secondPartOfL2CanBeTurnedIntoTrace} is more tricky. First, since $\bu$ and $\bv$ is the same function and differ only in their arguments (they now depend on $(t,x)$ and $(t,y)$ respectively) we compute   
\begin{equation}\label{eq: secondLeftOverWhenTurningL2IntoTrace}
    \begin{split}
            &\,\int_{\Omega_x\times\R^d_y}\Big|\big(b(\overline{u}^c)-b(\overline{v}^c)\big)\B^{<r}_{y}\Big[\cha^y,\rho_\e\Big]\Big|\dd x\dd y\\
     \leq&\, \frac{L_{b(\overline{u}^c)}}{2}\int_{\substack{x\in\Omega,\\ y\in\R^d,\\
    |z|<r}}|x-y||\cha(y+z)-\cha(y)||\rho_\e(x-y-z)-\rho_\e(x-y)|\dd x\dd y\dd \mu(z),
    \end{split}
\end{equation}
where $L_{b(\overline{u}^c)}$ is a local Lipschitz constant of $b(\overline{u}^c)$.  As the integrand of the previous integral is globally nonnegative, we expand the domain of integration to attain an upper bound. Ignoring the factor $L_{b(\overline{u}^c)}/2$, the previous integral is then bounded by 
\begin{align*}
    &\int_{\substack{x,y\in\R^d,\\
    |z|<r}}|x-y||\cha(y+z)-\cha(y)||\rho_\e(x-y-z)-\rho_\e(x-y)|\dd x\dd y\dd \mu(z)\\
    = &\, \int_{\substack{x, y\in\R^d,\\ |z|<r}}|x||\cha(y+z)-\cha(y)||\rho_\e(x-z)-\rho_\e(x)|\dd x\dd y\dd \mu(z)\\
    \leq &\,|\cha|_{TV(\R^d)}\int_{\substack{x\in\R^d,\\ |z|<r}}|x||\rho_\e(x-z)-\rho_\e(x)||z|\dd x\dd \mu(z),
\end{align*}
where we recall that $|\cha|_{TV(\R^d)}=|\partial\Omega|_{\mathcal{H}^{d-1}}<\infty$ (see comment after Proposition \ref{prop: BCompatiblePairs}). For the quantity $|\rho_\e(x-z)-\rho_\e(x)|$ we have the estimate 
\begin{align*}
    |\rho_\e(x-z)-\rho_\e(x)|\leq \frac{1}{\e^{d}}\Big(2\norm{\rho}{L^\infty(\R^d)}\wedge\tfrac{|z|}{\e}\norm{\nabla\rho}{L^\infty(\R^d)}\Big)\lesssim \frac{|z|}{\e^{d}(|z|+\e)},
\end{align*}
where the last bound exploits that $(1\wedge p)\leq 2p/(1+p)$ when $p\geq 0$, giving the desired conclusion for $p=|z|/\e$.
Moreover, the support of said quantity is contained in $\{(x,z)\colon |x|\wedge|x-z|<\e\}$; for fixed $z$, this set is of measure $\simeq \e^{d}$, and any $x$ in it satisfies $|x|<|z|+\e$. Putting this together yields
\begin{align*}
    \int_{\substack{x\in\R^d}}|x||\rho_\e(x-z)-\rho_\e(x)|\dd x \lesssim   \e^d(|z|+\e) \frac{|z|}{\e^d(|z|+\e)}=|z|,
\end{align*}
so that we may conclude
\begin{align*}
\int_{\substack{x\in\R^d,\\ |z|<r}}|x||\rho_\e(x-z)-\rho_\e(x)||z|\dd x\dd \mu(z)
\lesssim &\int_{|z|<r}|z|^2 \dd \mu(z)\to 0,\quad r\to0.
\end{align*}
To summarize, we have just demonstrated that both terms on the right-hand side of \eqref{eq: secondPartOfL2CanBeTurnedIntoTrace} vanish when taking $\lim_{r\to0}\limsup_{\e\to0}$. For the term we subtracted on the left-hand side of \eqref{eq: secondPartOfL2CanBeTurnedIntoTrace} we use the relation between $\B^{<r}$ and $\L^{<r}$ from Proposition \ref{prop: integrationByPartsFormula} to compute
\begin{align*}
    \int_{\Omega_x\times\R^d_y}|b(u)-b(\overline{u}^c)|\B^{<r}_y\Big[\cha^y,\rho_\e\Big]\dd x\dd y=&\,-\int_{\Omega_x\times \Omega_y}|b(u)-b(\overline{u}^c)|\L^{<r}_y\big[\rho_\e\big]\dd x\dd y\\
     =&\,-\int_{\Omega_x}|b(u)-b(\overline{u}^c)|\L^{<r}_x\big[\varrho_\e\big]\dd x\\
     =&\,\int_{\R^d_x}\B^{<r}_x\Big[|b(u)-b(\overline{u}^c)|,\varrho_\e\Big]\dd x,
\end{align*}
where $\varrho_\e$ is as defined in \eqref{eq: definitionOfVarrho} and where the compact support of $|b(u)-b(\overline{u}^c)|$ justified expanding the domain of integration from $\Omega_x$ to $\R^d_x$ (so to shift $\L^{<r}_x$ over to $\B^{<r}_x$).
All in all, analogous to \eqref{eq: firstPartOfL2CanBeTurnedIntoTrace}, we see that the second integral in \eqref{eq: firstLimitOfL2} can be written
\begin{align}\label{eq: eq: secondPartOfL2CanDefinitelyBeTurnedIntoTrace}
    - \int_0^T\int_{\Omega_x\times \R^d_y}|b(u)-b(v)|\B^{<r}_{y}[\cha^y,\rho_\e]\a \dd x\dd y\dd t = \int_{M_x}\B^{<r}_x\Big[|b(u)-b(\overline{u}^c)|,\varrho_\e\Big]\a\dd x\dd t + o(1),
\end{align}
where the $o$-term is such that $\lim_{r\to0}\limsup_{\e\to0}o(1)=0$. Finally, combining \eqref{eq: firstLimitOfL2} with \eqref{eq: firstPartOfL2CanBeTurnedIntoTrace} and \eqref{eq: eq: secondPartOfL2CanDefinitelyBeTurnedIntoTrace} we conclude that 
\begin{equation}\label{eq: rewritingTheLimOfL2}
    \begin{split}
         &\,\lim_{r\to0}\limsup_{\e\to0}\lim_{\epsilon\to0} L_u^2\\
    = &\,  \lim_{r\to0}\limsup_{\e\to0} \bigg(\int_{Q_x}F(u,\overline{u}^c)\cdot\nabla_x\varrho_\e\a - \int_{M_x}\B^{<r}_{x}\Big[|b(u)-b(\overline{u}^c)|,\varrho_\e\Big]\a\bigg).
    \end{split}
\end{equation}
Observe that $(\varrho_\e)_{\e>0}$ is a boundary layer sequence satisfying $\lim_{\e\to0}\varrho_\e=\frac{1}{2}$ pointwise on $\po$. By the weak trace result of Proposition \ref{prop: weakTrace} we conclude 
\begin{align*}
     &\,\lim_{r\to0}\limsup_{\e\to0}\lim_{\epsilon\to0} L_u^2= \frac{1}{2}\int_\Gamma \a \dd \nu_u,
\end{align*}
where we used that $\lim_{r\to0}\B^{<r}[|b(u)-b(\bu)|,\cha]= 0$ in $L^1(M)$ as follows from Corollary \ref{cor: extraRegularityOfEntropySolution}.
By analogous arguments, we also find that
\begin{align*}
     &\,\lim_{r\to0}\limsup_{\e\to0}\lim_{\epsilon\to0} L_v^2= \frac{1}{2}\int_\Gamma \a \dd \nu_v.
\end{align*}
\smallskip
\noindent\textbf{10)} \emph{The limit of $L_u^3$ and $L_v^3$.}
We will demonstrate that both limits vanish. Starting with $L_u^3$, we begin as before by going to the diagonal in the time variable
\begin{equation}\label{eq: firstLimitOfL3}
    \begin{split}
        \lim_{\epsilon\to0} L_u^3= &\,\int_0^T \int_{\R^d_x\times \R^d_y}|b(u)-b(v)|\B_{x,y}^{<r}\Big[\cha^x,\cha^y\Big]\rho_\e\a\dd x\dd y\dd t\\
        &\,-\int_0^T\int_{\R^d_x\times \R^d_y}\B^{<r}_y\Big[|b(u)-b(\overline{v}^c)|,\cha^y\Big]\cha^x\rho_\e\a\dd x\dd y \dd t,
    \end{split}
\end{equation}
where the time dependence of $v$ and $\overline{v}^c$ in now in $t$. For brevity, we again ignore $\a$, the integral in $t$ and arguments in $t$.

The second integral in \eqref{eq: firstLimitOfL3} is easily seen to vanish uniformly in $\e$ as $r\to0$. Indeed, as $b(u)$ depends on $x$ and not $y$ it follows by a triangle argument that
\begin{equation}\label{eq: theSecondPartOfL3Vanish}
    \begin{split}
         &\bigg|\int_{\R^d_x\times \R^d_y}\B^{<r}_y\Big[|b(u)-b(\overline{v}^c)|,\cha^y\Big]\cha^x\rho_\e\dd x\dd y\bigg|\\
    \leq &\,  \frac{1}{2}\int_{\substack{x,y\in\R^d\\ |z|<r}}\big|b(\overline{v}^c(y+z))-b(\overline{v}^c(y))\big|\big|\cha(y+z)-\cha(y)\big|\rho_\e(x-y)\dd x\dd y\dd \mu(z)\\
   \leq &\,  \frac{L_{b(\overline{v}^c)}}{2} \int_{\substack{y\in\R^d\\ |z|<r}}|z|\big|\cha(y+z)-\cha(y)\big|\dd y\dd \mu(z)\\
    \leq &\, \frac{L_{b(\overline{v}^c)}}{2}|\cha|_{TV} \int_{|z|<r}|z|^2\dd \mu(z)\to0, \qquad\textup{$r\to0$},
    \end{split}
\end{equation}
where $L_{b(\overline{v}^c)}$ denotes a local Lipschitz constant of $\overline{v}^c$. Thus, we turn our attention to the first integral in \eqref{eq: firstLimitOfL3}. We begin by a similar argument as was used for the second integral in \eqref{eq: firstLimitOfL2}: Observe that 
\begin{equation}\label{eq: splittingTheFirstPartOfL3}
    \begin{split}
         &\bigg|\int_{\R^d_x\times\R^d_y}\Big(|b(u)-b(v)|-|b(\bu)-b(\bv )|\Big)\B_{x,y}^{<r}\Big[\cha^x,\cha^y\Big]\rho_\e\dd x\dd y\bigg|\\
    \leq&\int_{\R^d_x\times\R^d_y}\Big(|b(u)-b(\overline{u}^c)|+|b(v)-b(\overline{v}^c)|\Big)\Big|\B_{x,y}^{<r}\Big[\cha^x,\cha^y\Big]\Big|\rho_\e \dd x\dd y.
    \end{split}
\end{equation}
We now show that the right-hand side vanish uniformly in $\e$ as $r\to0$. Focusing on the $|b(u)-b(\bu)|$ term (which is supported in $\Omega_x$)  we have
\begin{align*}
    &\,\int_{\R^d_x\times\R^d_y}|b(u)-b(\overline{u}^c)|\Big|\B_{x,y}^{<r}\Big[\cha^x,\cha^y\Big]\Big|\rho_\e\dd x\dd y\\
    = &\,\frac{1}{2}\int_{\substack{x\in\Omega\\ x+z\in\Omega^c\\ |z|<r}} |b(u(x))-b(\overline{u}^c(x))|\Bigg(\int_{\R^d_y} |\cha(y+z)-\cha(y)|\rho_\e\dd y\Bigg) \dd x\dd \mu(z)\\
    \leq &\, \int_{\substack{x\in\Omega\\ x+z\in\Omega^c\\ |z|<r}} |b(u(x))-b(\overline{u}^c(x))| \dd x\dd \mu(z)\to0, \qquad\textup{$r\to0$},
\end{align*}
where the limit holds by the boundary integrability of Proposition \ref{prop: boundaryIntegrability}. As a corresponding computation can be carried out for the $|b(v)-b(\bv)|$ term, we conclude that the right-hand side of \eqref{eq: splittingTheFirstPartOfL3} vanish when taking $\lim_{r\to0}\limsup_{\e\to0}$. Thus, we may replace $|b(u)-b(v)|$ with the more regular $|b(\bu)-b(\bv)|$ when seeking the limit of the first integral in \eqref{eq: firstLimitOfL3}. Moreover, by a similar argument used to prove Proposition \ref{prop: integrationByPartsFormula} we may shift $\B^{<r}_{x,y}[\cha^x,\cdot]$ from $\cha^y$ over to $|b(\bu)-b(\bv)|\rho_\e$ giving 
\begin{align*}
    &\int_{\R^d_x\times \R^d_y}|b(\overline{u}^c)-b(\overline{v}^c)|\B_{x,y}^{<r}\Big[\cha^x,\cha^y\Big]\rho_\e\dd x\dd y
    =\int_{\R^d_x\times \R^d_y}\B_{x,y}^{<r}\Big[\cha^x,|b(\overline{u}^c)-b(\overline{v}^c)|\rho_\e\Big]\cha^y\dd x\dd y.
\end{align*}
And by a triangle inequality argument, we find
\begin{align*}
    &\int_{\R^d_x\times \Omega_y}\Big|\B_{x,y}^{<r}\Big[\cha^x,|b(\overline{u}^c)-b(\overline{v}^c)|\rho_\e\Big]\Big|\dd x\dd y\\
    \leq&\,\int_{\R^d_x\times \Omega_y}\int_{|z|<r}|\cha(x+z)-\cha(x)||b(\overline{v}^c(y+z))-b(\bv(y))|\rho_\e(x-y-z)\dd\mu(z) \dd x\dd y\\
    &\,+\int_{\R^d_x\times \Omega_y}\int_{|z|<r}|\cha(x+z)-\cha(x)||b(\overline{u}^c)-b(\overline{v}^c)||\rho_\e(x-y-z)-\rho_\e(x-y)|\dd\mu(z) \dd x\dd y.
\end{align*}
The terms on the right-hand side are of similar form as the ones from \eqref{eq: theSecondPartOfL3Vanish} and \eqref{eq: secondLeftOverWhenTurningL2IntoTrace} respectively; we infer that both vanish uniformly in $\e$ as $r\to0$. 

In conclusion, both integrals on the right-hand side of \eqref{eq: firstLimitOfL3} vanish when taking $\lim_{r\to0}\limsup_{\e\to0}$, and so
\begin{align*}
    \lim_{r\to0}\limsup_{\e\to0}\lim_{\epsilon\to0} L_u^3=0.
\end{align*}
By analogous arguments, we also obtain
\begin{align*}
    \lim_{r\to0}\limsup_{\e\to0}\lim_{\epsilon\to0} L_v^3=0.
\end{align*}

\smallskip
\noindent\textbf{11)} \emph{Concluding.} Together with \eqref{eq: uniquenessFourthInequality} and \eqref{eq: justifyingTheIndividualLimitsOfTheLTerms}, these limits imply that
\begin{align}\label{eq: uniquenessFifthInequality}
     &\,-\int_{Q} |u(t,x)-v(t,x)|\a'(t)\dd x\dd t \leq 0.
\end{align}
This in turn gives for a.e.~$\tau\in(0,T)$ the $L^1$-contraction
\begin{align*}
    &\,\int_{\Omega} |u(\tau,x)-v(\tau,x)|\dd x \leq \int_{\Omega}|u_0(x)-v_0(x)|\dd x.
\end{align*}
Indeed, this standard implication follows by letting $\a\to \chi_{(0,\tau)}$ pointwise while $\norm{\a'}{L^1((0,T))}\leq 2$; the initial $L^1$-continuity of $u$ and $v$ (Lemma \ref{lem: timeContinuityAtZeroOfEntropySolution}) and the Lebesgue differentiation theorem then gives the above inequality for a.e.~$\tau\in(0,T)$. With this, uniqueness is established.
\end{proof}


\subsection*{Acknowledgments}
We would like to thank the anonymous referees for a thorough reading and insightful suggestions that helped us improve the paper.\\
This research has been supported by the Research Council of Norway through different grants: All authors were supported by the Toppforsk (research excellence) grant agreement no. 250070 `Waves and Nonlinear Phenomena (WaNP)', OM is supported by grant agreement no. 301538 `Irregularity and Noise In Continuity Equations' (INICE), JE was supported by the MSCA-TOPP-UT grant agreement no. 312021, and ERJ is supported by grant agreement no. 325114 `IMod. Partial differential equations, statistics and data: An interdisciplinary approach to data-based modelling'. In addition, JE received funding from the European Union’s Horizon 2020 research and innovation programme under the Marie Sk{\l}odowska-Curie grant agreement no. 839749 `Novel techniques for quantitative behavior of convection-diffusion equations (techFRONT)'. 


\appendix

\section{Entropy solutions are a generalized solution concept}
\label{sec:ClassicalSolutionsAreEntropySolutions}

Entropy solutions in the sense of Definition Definition \ref{def: entropySolutionVovelleMethod} naturally contain classical solutions, and they are more restrictive than very weak solutions.

\begin{lemma}\label{lem:ClassicalSolutionsAreEntropySolutions}
Assume \eqref{Omegaassumption}--\eqref{muassumption}. Then:
\begin{enumerate}[{\rm (a)}]
\item Classical solutions of \eqref{E} are entropy solutions in the sense of Definition \ref{def: entropySolutionVovelleMethod}.
\item Entropy solutions in the sense of Definition \ref{def: entropySolutionVovelleMethod} satisfy
\begin{equation*}
\int_M u\partial_t\varphi+f(u)\cdot \nabla\varphi + b(u)\L[\varphi]\dd x \dd t=0,
\end{equation*}
for all $\varphi\in C_c^\infty(Q)$, i.e., they are (very) weak solutions.
\end{enumerate}

\end{lemma}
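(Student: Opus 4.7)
For part (a), I follow the computation sketched in the motivating paragraphs before Definition~\ref{def: entropySolutionVovelleMethod}. Fix a classical solution $u$ and an admissible pair $(k,\varphi)$, and work with the shifted equation $\partial_tu + \text{div}(f(u)-f(k)) = \L[b(u)-b(k)]$ (valid since $k$ is constant). Multiply by $\sgn^\pm(u-k)\varphi$ and integrate over $Q$, using the a.e.\ chain-rule identities $\sgn^\pm(u-k)\,\partial_t(u-k) = \partial_t(u-k)^\pm$ and $\sgn^\pm(u-k)\,\text{div}(f(u)-f(k)) = \text{div}\,F^\pm(u,k)$, both valid away from $\{u=k\}$ since $\sgn^\pm$ is locally constant on a smooth solution. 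Integration by parts in time (using $\varphi(T,\cdot)=0$) produces the initial term $-\int_\Omega (u_0-k)^\pm\varphi(0,\cdot)\dd x$, and the divergence theorem in space produces the boundary integral $\int_\Gamma F^\pm(u^c,k)\cdot\hat n\,\varphi\dd\sigma(x)\dd t$. The latter is absorbed using the pointwise Lipschitz bound $F^\pm(u^c,k)\cdot\hat n + L_f(u^c-k)^\pm \geq 0$, which produces the stated term $L_f\int_\Gamma(u^c-k)^\pm\varphi\dd\sigma(x)\dd t$ in \eqref{eq: entropyInequalityVovelleMethod} modulo a nonnegative boundary defect that is simply discarded.

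For the nonlocal right-hand side $\int_Q\sgn^\pm(u-k)\varphi\,\L[b(u)-b(k)]\dd x\dd t$, I split $\L=\L^{\geq r}+\L^{<r}$. The $\L^{\geq r}$ contribution immediately produces the required term $\int_Q\L^{\geq r}[b(u)]\sgn^\pm(u-k)\varphi\dd x\dd t$ in the Definition after noting $\L^{\geq r}[b(k)]=0$. For the $\L^{<r}$ contribution the strategy is to first replace $\sgn^\pm(u-k)\cha$ with $\sgn^\pm(b(u)-b(k))$ and then apply the pointwise convex inequality $\sgn^\pm(v)\L^{<r}[v]\leq \L^{<r}[v^\pm]$ (Corollary~\ref{cor:ConvexInequalityNonlocalOperators}) to $v=b(u)-b(k)$. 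The replacement is exact in $Q^c$ because the admissibility condition $(b(u^c)-b(k))^\pm\varphi=0$ forces $\sgn^\pm(b(u^c)-b(k))\varphi=0$, and in $Q$ any pointwise discrepancy is concentrated on the set $\{u\neq k,\,b(u)=b(k)\}$, where monotonicity of $b$ and the identity $\sgn^\pm(v)v=v^\pm$ ensure the correction combines with the convex inequality with the correct sign. Finally, self-adjointness of $\L^{<r}$ (Proposition~\ref{prop: integrationByPartsFormula}) transfers $\L^{<r}$ from $(b(u)-b(k))^\pm$ onto $\varphi$ and yields the remaining term $-\int_M(b(u)-b(k))^\pm\L^{<r}[\varphi]\dd x\dd t$ in \eqref{eq: entropyInequalityVovelleMethod}, completing part (a).

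For part (b), take $\varphi\in C_c^\infty(Q)$, so the admissibility condition is vacuous for every $k\in\R$. The three constant integrals $\int_Q k\,\partial_t\varphi$, $\int_Q f(k)\cdot\nabla\varphi$, and $\int_M b(k)\L^{<r}[\varphi]\dd x\dd t$ all vanish, the last via $\int_{\R^d}\L^{<r}[\varphi]\dd x=0$ (by Fubini and translation invariance of Lebesgue measure). Choosing any $k > \esssup_M u$ (possible by Lemma~\ref{lem: maximumsPrinciple}) in the $(-)$ entropy inequality, one has $u<k$ a.e.\ in $M$, so $\sgn^-(u-k)\equiv -1$, $(u-k)^-=k-u$, $F^-(u,k)=f(k)-f(u)$, and $(b(u)-b(k))^-=b(k)-b(u)$; after cancelling the constants listed above, \eqref{eq: entropyInequalityVovelleMethod} collapses to
\[
\int_Q u\,\partial_t\varphi\dd x\dd t + \int_Q f(u)\cdot\nabla\varphi\dd x\dd t + \int_Q\L^{\geq r}[b(u)]\varphi\dd x\dd t + \int_M b(u)\L^{<r}[\varphi]\dd x\dd t\,\leq\,0.
\]
Symmetrically, $k<\essinf_M u$ in the $(+)$ entropy inequality gives the reverse $\geq 0$, and the two combine to an equality. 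Self-adjointness of $\L^{\geq r}$ (Proposition~\ref{prop: integrationByPartsFormula}) then yields $\int_Q\L^{\geq r}[b(u)]\varphi=\int_M b(u)\L^{\geq r}[\varphi]$ (the extension from $Q$ to $M$ is costless since $\varphi$ is supported in $Q$), and combining with the $\L^{<r}$ term produces $\int_M b(u)\L[\varphi]\dd x\dd t$, which is exactly the asserted very weak formulation.

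The main obstacle is the $\L^{<r}$ step in part (a), where one must reconcile the $\sgn^\pm(u-k)$ coming from the chain-rule derivation with the $\sgn^\pm(b(u)-b(k))$ needed to apply the convex inequality on the set where $b$ is flat through $k$. This is precisely the point at which the admissibility condition \eqref{eq: admissibilityConditionOnConstantAndTestfunction} enters, and it explains why the Definition restricts to test pairs $(k,\varphi)$ satisfying it. Part (b) is comparatively routine once self-adjointness and the cancellation of constant-against-derivative integrals are identified.
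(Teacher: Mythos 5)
Your proof is correct and follows essentially the same route as the paper's: the same Kru\v{z}kov multiplication and integration by parts, the same Lipschitz bound absorbing the boundary flux, the same use of the admissibility condition together with the convex inequality of Corollary~\ref{cor:ConvexInequalityNonlocalOperators} and self-adjointness of the truncated operators for the nonlocal terms, and in (b) the same device of extreme constants $k$. The only differences are cosmetic --- the paper regularizes $\sgn^\pm$ by $\sgn^\pm_\epsilon$ and passes to a weak-$\star$ limit where you invoke the a.e.\ chain rule for Lipschitz compositions of the smooth solution directly --- plus one omitted one-liner in (b): your derivation only applies to $0\le\varphi$, and the general case requires writing $\varphi=\psi-(\psi-\varphi)$ with $\psi,\ \psi-\varphi\ge 0$ in $C_c^\infty(Q)$ and using linearity of the resulting identity.
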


\begin{proof}
\noindent(a) Let us assume that $u,b(u)\in C^2_{b}([0,T)\times\R^d)$ and $u$ solves \eqref{E} classically. It remains to check part (a) of Definition \ref{def: entropySolutionVovelleMethod}. To that end, multiply the PDE by $\sgn_{\epsilon}^{\pm}(u-k)\varphi$, where
\begin{align*}
    \sgn_{\epsilon}^{\pm}(s)=\begin{cases}0,\quad \pm s\leq 0,\\
    \frac{s}{\epsilon},\quad 0\leq\pm s\leq \epsilon,\\
    \pm 1,\quad \epsilon\leq \pm s,
    \end{cases}
\end{align*}
and $0\leq \varphi\in C_c^{\infty}([0,T)\times \R^d)$ satisfies \eqref{eq: admissibilityConditionOnConstantAndTestfunction}. With the short-hand notation
$$
(\psi)_\epsilon^\pm =\int_0^{\psi}\sgn^\pm_\epsilon(\xi)\dd \xi,
$$
for (real valued) expressions $\psi$, we get after integrating the PDE over $Q$ (and performing several chain rules, integrations by parts, and using \eqref{eq: admissibilityConditionOnConstantAndTestfunction})
\begin{align*}
&\,-\int_{Q} \Big((u-k)_{\epsilon}^{\pm}\dell_t \varphi +\sgn_{\epsilon}^{\pm}(u-k)\big(f(u)-f(k)\big)\cdot\nabla \varphi\Big)\dd x\dd t\\
&\,-\int_Q \L^{\geq r}[b(u)]\sgn_\epsilon^{\pm}(u-k)\varphi\dd x\dd t -\int_M (b(u)-b(k))_\epsilon^{\pm}\L^{<r}[\varphi]\dd x\dd t\\
= &\, \int_\Omega(u_0-k)_\epsilon^{\pm}\varphi(0,\cdot)\dd x + L_f\int_\Gamma (\overline{u}^c-k)_\epsilon^\pm\varphi\dd \sigma(x)\dd t\\
&\quad- \int_\Gamma\Big(\sgn_\epsilon^\pm(u-k)(f(u)-f(k))\cdot \hat{n}+L_f(u-k)_{\epsilon}^{\pm}\Big)\varphi\dd \sigma(x)\dd t\\
&\quad + \int_M \Big(\L^{<r}[b(u)]\sgn_\epsilon^\pm(u-k)\cha-\L^{<r}[(b(u)-b(k))_{\epsilon}^\pm]\Big)\varphi\dd x\dd t\\
&\quad +\int_Q (f(u)-f(k))\cdot(\nabla u) (\sgn_\epsilon^{\pm})'(u-k)\varphi \dd x\dd t.\\
\end{align*}
We then 
send $\epsilon\to 0$ to obtain
\begin{align*}
&\,-\int_{Q}\Big((u-k)^{\pm} \dell_t\varphi +F^{\pm}(u,k)\cdot\nabla \varphi\Big)\dd x\dd t \\
&\,-\int_M \L^{\geq r}[b(u)-b(k)]\sgn^{\pm}(u-k)\varphi\cha\dd x\dd t -\int_M (b(u)-b(k))^{\pm}\L^{<r}[\varphi]\dd x \dd t\\
= &\, \int_\Omega(u_0-k)^{\pm}\varphi(0,\cdot)\dd x + L_f\int_\Gamma (\overline{u}^c-k)^\pm\varphi\dd\sigma(x)\dd t - \int_M \varphi \dd \mu^{\pm,r},\\
\end{align*}
where
\begin{align*}
\mu^{\pm,r} &\coloneqq\Big(\hat{n}\cdot F^{\pm}(u,k)+L_f(u-k)^{\pm}\Big)\delta_{(t,x)\in\Gamma}\\
&\quad +\Big(\L^{<r}[(b(u)-b(k))^\pm]-\L^{<r}[b(u)-b(k)]\sgn^\pm(u-k)\cha\Big).
\end{align*}\cb
The proof is complete if we can demonstrate that $\int_M \varphi \dd \mu^{\pm,r}\geq 0$. Clearly, the first part of $\mu^{\pm,r}$ is nonnegative since $|\hat{n}\cdot F^{\pm}(u,k)|\leq L_f (u-k)^\pm$, and since $\varphi\geq 0$ we thus infer that
\begin{equation*}
\begin{split}
\int_M \varphi \dd \mu^{\pm,r}\geq &\,\int_M \Big(\L^{<r}[(b(u)-b(k))^\pm]-\L^{<r}[b(u)-b(k)]\sgn^\pm(b(u)-b(k))\cha\Big)\varphi\dd x\dd t\\
=&\,\int_{Q} \Big(\L^{<r}[(b(u)-b(k))^\pm]-\L^{<r}[b(u)-b(k)]\sgn^\pm(b(u)-b(k))\Big)\varphi\dd x\dd t\\
&\, + \int_{M\setminus Q}\L^{<r}[(b(u)-b(k))^\pm]\varphi\dd x\dd t.
\end{split}
\end{equation*}
The first term on the right-hand side is nonnegative because of Corollary \ref{cor:ConvexInequalityNonlocalOperators} (specifically its second part), while the second term is nonnegative since \eqref{eq: admissibilityConditionOnConstantAndTestfunction} gives us the identity
\begin{align*}
    \int_{M\setminus Q}\L^{<r}[(b(u)-b(k))^\pm]\varphi\dd x\dd t=\int_{M\setminus Q}\int_{|z|<r}\big(b(u(t,x+z))-b(k)\big)^\pm\varphi(t,x)\dd\mu(z)\dd x\dd t\geq0,
\end{align*}
and so we are done. \nc

\smallskip
\noindent(b) The proof follows by Definition \ref{def: entropySolutionVovelleMethod}, the convex inequality
$\L^{\geq r}[\psi]\sgn^\pm\psi \leq  \L^{\geq r}[\psi^\pm ]$ (cf. Corollary \ref{cor:ConvexInequalityNonlocalOperators}), and considering $(u-k^\mp)^\pm=\pm(u-k^\mp)$ where
$$
k^{\pm}=\pm\big(\norm{u_0}{L^\infty(\Omega)}+\norm{u^c}{L^\infty(Q^c)})
$$
and 
$k^-\leq u\leq k^+$ by Lemma \ref{lem: maximumsPrinciple}. For $0\leq\varphi\in C_c^\infty(Q)$ we then conclude from \eqref{eq: entropyInequalityVovelleMethod} that
\begin{equation*}
\begin{split}
&\mp\int_Q \Big( u\partial_t\varphi + f(u)\cdot\nabla\varphi \Big)+\int_M b(u)\L[\varphi]
\leq \mp\int_Q \Big( k^{\mp}\partial_t\varphi + f(k^{\mp})\cdot\nabla\varphi \Big),
\end{split}
\end{equation*}
where the non-singular operator $\L^{\geq r}$ has been shifted over to $\varphi$ using that $\L^{\geq r}$ is self adjoint (Proposition \ref{prop: integrationByPartsFormula}) followed by letting $r\to 0$.
As $\varphi$ has compact support in $Q$, the right-hand side is zero and thus
\begin{equation}\label{eq:EntropySolutionsAreWeakSolutions}
-\int_Q\Big(u\partial_t\varphi+f(u)\cdot \nabla\varphi\Big)\dd x \dd t+\int_Mb(u)\L[\varphi]\dd x \dd t=0.
\end{equation}
The general case when $\varphi\in C_c^\infty(Q)$ holds by picking $0\leq \psi\in C_c^\infty(Q)$ such that $0\leq \phi\coloneqq \psi-\varphi$ and using that \eqref{eq:EntropySolutionsAreWeakSolutions} holds for $\phi$ and $\psi$.
\end{proof}


\section{Proofs of basic properties of entropy solutions}
\label{app:div_pfs}

We will prove the maximum principle and time continuity at $t=0$ for entropy solutions.

\begin{proof}[Proof of Lemma \ref{lem: maximumsPrinciple}]
In the entropy inequality \eqref{eq: entropyInequalityVovelleMethod} for $u$ we set $k$ in the $(+)$ case to $k^+\coloneqq \sup_{\Omega} u_0\vee  \sup_{ Q^c} u^c$ and in the $(-)$ case to $k^-\coloneqq \inf_{\Omega} u_0\wedge \inf_{Q^c} u^c$. Then the condition \eqref{eq: admissibilityConditionOnConstantAndTestfunction} is automatically satisfied for every $0\leq \varphi\in C_c^\infty([0,T)\times \R^d)$. We then set $\varphi(t,x)=\theta(t)\phi(x)$ with $0\leq \theta\in C^\infty_c([0,T))$ and with $0\leq \phi\in C^\infty_c(\R^d)$, and where $\phi=1$ in $\Omega$ and $\phi\leq 1$ in $\Omega^c$. By sending $r\to\infty$ in \eqref{eq: entropyInequalityVovelleMethod} we find that
\begin{equation*}
\begin{split}
&\,-\int_Q  (u-k^\pm)^\pm\theta'\dd x\dd t - \int_M(b(u)-b(k^\pm))^\pm\L[\phi]\theta\dd x\dd t\leq 0.
\end{split}
\end{equation*}
As $(b(u)-b(k^\pm))^{\pm}=0$ a.e.~in $Q^c$ and $\L[\phi]\leq 0$ in $Q$ ($\phi$ attains its global maximum on every point in $Q$), the integrand of the second integral is clearly non-positive and we conclude that
\begin{align*}
    -\int_Q (u-k^\pm)^\pm\theta' \dd x\dd t\leq 0, \quad \implies \quad \int_Q (u-k^\pm)^\pm \dd x\dd t\leq 0 ,
\end{align*}
where the last implication follows from setting $\theta(t)=T-t$, which can be approximated by nonnegative elements of $C_c^\infty([0,T))$. This concludes the proof.
\end{proof}

\begin{proof}[Proof of Lemma \ref{lem: timeContinuityAtZeroOfEntropySolution}]
We follow Lemma 7.41 in \cite{MaNeRoRu96}. Let $(\theta_\epsilon)_{\epsilon>0}\subset C^\infty_c([0,T))$ be a family of functions satisfying $0\leq\theta_\epsilon(t)\leq\theta_\epsilon(0)=1$, $\supp(\theta_\epsilon)\subset[0,\epsilon)$, and $\theta_\epsilon(t)-\theta_{\epsilon'}(t)\geq 0$ when $\epsilon\geq \epsilon'>0$. For any $0\leq \b\in C^\infty_c(\Omega)$, we set $\varphi(t,x)=(\theta_\epsilon-\theta_{\epsilon'})(t)\b(x)$ in \eqref{eq: entropyInequalityVovelleMethod}, add the inequalities for $(\cdot)^+$ and $(\cdot)^-$, and since all terms are bounded and $\theta_\epsilon\to 0$ as $\epsilon\to0$, we observe that 
\begin{align}
\label{eq: nonDecreasingSequenceInTheLimit}
    \limsup_{\epsilon\to0}\sup_{\epsilon'\in(0,\epsilon)}-\iint_{Q} |u(t,x)-k| \b(x)  (\theta_{\epsilon}'-\theta_{\epsilon'}')(t) \dd x\dd t \leq 0,
\end{align}
or equivalently, $\underset{\epsilon\to0}{\limsup}\iint_{Q} |u(t,x)-k| \b(x)  (-\theta_{\epsilon}')(t) \dd x\dd t \leq    \underset{\epsilon'\to0}{\liminf}\iint_{Q} |u(t,x)-k| \b(x)  (-\theta_{\epsilon'}')(t) \dd x\dd t$.
In the same way, but with $\varphi(t,x)=\theta_\epsilon(t)\b(x)$, we find that
\begin{align}\label{int-lim}
    \lim_{\epsilon\to0}\int_0^\epsilon\Bigg(\int_{\Omega} |u(t,x)-k| \b(x) \dd x \Bigg) (-\theta_\epsilon'(t))\dd t \leq \int_\Omega |u_0(x)-k|\b(x) \dd x,
\end{align}
where the limit on the left-hand side exists by \eqref{eq: nonDecreasingSequenceInTheLimit}. It then follows that 
\begin{align}\label{lim2}
    \esslimsup_{t\to0+}\int_{\Omega} |u(t,x)-k| \b(x) \dd x \leq\int_\Omega |u_0(x)-k|\b(x) \dd x.
\end{align}

Indeed, assume \eqref{lim2} is false: There is an $r>0$ and a set $E\subset [0,T]$ with $\textup{meas} \{E\cap (0,\delta)\}>0$ for all $\delta>0$, such that $\int_{\Omega} |u(t,x)-k| \b(x) \dd x> \int_\Omega |u_0(x)-k|\b(x) \dd x+r$ for $t\in E$. Let  $\theta_\epsilon(t)=1-\int_0^t\psi(s)\dd s$ and $\psi(s)=\frac{1}{\textup{meas}\,E_\epsilon}1_{E_\epsilon}*\rho_\delta(t)$ where $\delta\in(0,\epsilon)$, $E_\epsilon:=E\cap (0,\epsilon)$, $\rho_\delta=\frac1\delta\rho(\frac t\delta)$, and $0\leq\rho\in C^\infty_c((0,T))$ with $\int_0^T\rho \dd t=1$. 
Integration in time against $\theta'_\epsilon=-\psi$ and sending $\delta\to0$ then gives for all $\epsilon>0$,
\begin{align*}
    \lim_{\delta\to0}\int_0^\epsilon\Bigg(\int_{\Omega} |u(t,x)-k| \b(x) \dd x \Bigg) (-\theta_\epsilon'(t))\dd t 
    = &\,\frac1{\textup{meas}\,E_\epsilon}\int_{E_\epsilon}\int_{\Omega} |u(t,x)-k| \b(x) \dd x  \dd t \\
    &> \int_\Omega |u_0(x)-k|\b(x) \dd x +r,
\end{align*}
which contradicts \eqref{int-lim}.

By approximation, \eqref{lim2} holds also when $\beta$ is a characteristic function. 
Since $k$ is arbitrary, it follows by linearity of integrals and density of 
simple functions in $L^1(\Omega)$, that for any $v_0\in L^\infty(\Omega)\subset L^1(\Omega)$,
\begin{align*} \esslimsup_{t\to0+}\int_{\Omega} |u(t,x)-v_0(x)| \dd x \leq\int_\Omega |u_0(x)-v_0(x)| \dd x.
\end{align*}
Taking $v_0=u_0$ and using $0\geq \esslimsup [\dots] \geq \essliminf [\dots]\geq 0$, the proof is complete.
%
\end{proof}


\section{Convex inequality for nonlocal operators}\label{sec:ConvexInequalityNonlocalOperators}
The following lemma is well-known, and so we omit the proof.
\begin{lemma}\label{lem:ConvexFunctions}
Assume $\eta:\R\to\R$ is convex. Then, for any real numbers $v,w$, we have
$$
\eta(v)-\eta(w)\geq \eta'(w)(v-w),
$$
where $\eta'$ is a subderivative of $\eta$, that is, $\eta'(w)\in [\eta'(w-),\eta'(w+)]$.
\end{lemma}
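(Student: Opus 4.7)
This is the classical subgradient inequality for convex functions of a real variable, so the plan is to carry out the standard monotone-difference-quotient argument and handle the three cases $v>w$, $v<w$, $v=w$ separately.

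The first step is to derive from convexity the three-chord inequality: for any $a<b<c$,
\begin{equation*}
\frac{\eta(b)-\eta(a)}{b-a}\leq \frac{\eta(c)-\eta(a)}{c-a}\leq \frac{\eta(c)-\eta(b)}{c-b}.
\end{equation*}
This follows by writing $b=ta+(1-t)c$ with $t=(c-b)/(c-a)\in(0,1)$ and applying $\eta(b)\leq t\eta(a)+(1-t)\eta(c)$. As an immediate consequence, the functions $s\mapsto (\eta(s)-\eta(w))/(s-w)$ are nondecreasing in $s$ on $(-\infty,w)$ and on $(w,\infty)$, so the one-sided derivatives $\eta'(w-)$ and $\eta'(w+)$ exist in $\R$ and satisfy $\eta'(w-)\leq \eta'(w+)$, with
\begin{equation*}
\frac{\eta(v)-\eta(w)}{v-w}\geq \eta'(w+)\quad\text{if }v>w,\qquad \frac{\eta(v)-\eta(w)}{v-w}\leq \eta'(w-)\quad\text{if }v<w.
\end{equation*}

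With this in hand, the second step is just to split into cases. Let $\eta'(w)\in[\eta'(w-),\eta'(w+)]$. If $v>w$, multiply the first inequality by $v-w>0$ and use $\eta'(w+)\geq \eta'(w)$ to get $\eta(v)-\eta(w)\geq \eta'(w+)(v-w)\geq \eta'(w)(v-w)$. If $v<w$, multiply the second inequality by $v-w<0$ (which flips the direction) and use $\eta'(w-)\leq \eta'(w)$ to again conclude $\eta(v)-\eta(w)\geq \eta'(w)(v-w)$. The case $v=w$ is trivial.

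\textbf{Main obstacle.} There is no real obstacle; the only subtlety is being careful that $\eta'(w)$ denotes \emph{any} element of the subdifferential $[\eta'(w-),\eta'(w+)]$ rather than a genuine derivative, so the inequalities for the one-sided derivatives must be combined with the correct orientation on each side of $w$. Everything else is a direct application of the three-chord inequality.
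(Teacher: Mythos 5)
Your proof is correct: the three-chord inequality, the resulting monotonicity of the difference quotients $s\mapsto(\eta(s)-\eta(w))/(s-w)$, and the case analysis with the sign flip for $v<w$ are all handled properly. The paper states Lemma \ref{lem:ConvexFunctions} without proof, treating it as the classical subgradient inequality, so your argument simply supplies the standard proof that the authors omitted; there is no divergence of approach to report.
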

\begin{corollary}\label{cor:ConvexInequalityNonlocalOperators}
Let $\L$ be a Lévy operator and $\eta\colon \R\to\R$ be convex. For $\phi\in L^\infty(\R^d)$ we have
$$
\L[\phi]\eta'(\phi)\leq\L[\eta(\phi)]\qquad\text{a.e.}\qquad \text{in $\R^d$},
$$
provided the expressions are well-defined in $L^1_{\loc}(\R^d)$.
 In the special case $\eta=(\cdot)^{\pm}$, and with $h\colon \R\to\R$ a non-decreasing function, then for any $\phi,\psi\in L^\infty(\R^d)$ we have
\begin{align*}
    \L[h(\phi)-h(\psi)]\sgn^{\pm}(\phi-\psi)\leq \L\big[\big(h(\phi)-h(\psi)\big)^{\pm}\big]\qquad\text{a.e.}\qquad \text{in $\R^d$},
\end{align*}
again, provided that the expressions are well-defined in $L^1_{\loc}(\R^d)$.
\end{corollary}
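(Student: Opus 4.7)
The plan is to derive the general inequality directly from Lemma \ref{lem:ConvexFunctions} applied pointwise inside the singular integral, and then to deduce the special case by applying the general inequality to $w\coloneqq h(\phi)-h(\psi)$ with $\eta=(\cdot)^{\pm}$ and swapping $\sgn^{\pm}(h(\phi)-h(\psi))$ for $\sgn^{\pm}(\phi-\psi)$ via a short case analysis based on the monotonicity of $h$.

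For the first inequality, I would fix $x$ and, for each $z\in\R^d$, set $v=\phi(x+z)$ and $w=\phi(x)$ in Lemma \ref{lem:ConvexFunctions}, yielding the pointwise bound
\[\eta(\phi(x+z))-\eta(\phi(x))\geq \eta'(\phi(x))\big(\phi(x+z)-\phi(x)\big).\]
Since $\phi\in L^{\infty}(\R^d)$ and $\eta$ is convex (hence locally Lipschitz, hence bounded on the range of $\phi$), and since $\mu$ is finite by assumption, both sides are integrable against $d\mu(z)$. Integrating gives $\L[\eta(\phi)](x)\geq \eta'(\phi(x))\L[\phi](x)$ for a.e. $x\in\R^d$, which is the first claim.

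For the special case, I apply the first inequality to $w=h(\phi)-h(\psi)\in L^{\infty}(\R^d)$ with the convex $\eta=(\cdot)^{\pm}$, using the subderivative $\eta'=\sgn^{\pm}$ with the convention $\sgn^{\pm}(0)=0$ (consistent with the notation fixed at the start of Section \ref{sec:AssumptionsConceptMain}). This yields at once
\[\L[h(\phi)-h(\psi)]\,\sgn^{\pm}\big(h(\phi)-h(\psi)\big)\leq \L\big[(h(\phi)-h(\psi))^{\pm}\big]\qquad\text{a.e.}\]
It remains to replace $\sgn^{\pm}(h(\phi)-h(\psi))$ by $\sgn^{\pm}(\phi-\psi)$; I carry out the $+$ case, the $-$ case being symmetric.

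The main (and essentially the only) obstacle is that when $h$ has flat regions one can have $\phi(x)>\psi(x)$ yet $h(\phi(x))=h(\psi(x))$, so that $\sgn^{+}(\phi-\psi)(x)=1$ while $\sgn^{+}(h(\phi)-h(\psi))(x)=0$. I handle this by a trichotomy at a.e.\ $x$: if $\phi(x)\leq \psi(x)$ then the desired left-hand side vanishes while monotonicity of $h$ forces $(h(\phi)-h(\psi))^{+}(x)=0$, hence $\L[(h(\phi)-h(\psi))^{+}](x)=\int (h(\phi(x+z))-h(\psi(x+z)))^{+}\,d\mu(z)\geq 0$; if $\phi(x)>\psi(x)$ and $h(\phi(x))>h(\psi(x))$, the two sign functions agree and the bound reduces to the displayed inequality; in the degenerate remaining case $\phi(x)>\psi(x)$ with $h(\phi(x))=h(\psi(x))$, the value at $x$ drops out on both sides, so the left-hand side equals $\int(h(\phi(x+z))-h(\psi(x+z)))\,d\mu(z)$ and is bounded by the right-hand side $\int(h(\phi(x+z))-h(\psi(x+z)))^{+}\,d\mu(z)$ via the pointwise inequality $a\leq a^{+}$ under the integral.
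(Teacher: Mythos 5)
Your proof is correct and follows essentially the same route as the paper: the first inequality is Lemma \ref{lem:ConvexFunctions} applied pointwise under the (finite) integral, and the special case reduces to the elementary facts $v\,\sgn^{\pm}(v)=(v)^{\pm}$ and $w\,\sgn^{\pm}(v)\leq (w)^{\pm}$ together with the observation that $\sgn^{\pm}(\phi-\psi)$ and $\sgn^{\pm}(h(\phi)-h(\psi))$ agree wherever $h(\phi)\neq h(\psi)$. The only cosmetic difference is that you first invoke the general inequality with $\eta=(\cdot)^{\pm}$ and then patch the sign via your trichotomy, whereas the paper argues the same case distinction directly at the level of the integrands; your handling of the degenerate case $\phi(x)>\psi(x)$, $h(\phi(x))=h(\psi(x))$ via $a\leq a^{+}$ is exactly the needed point.
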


\begin{proof}\cb
It suffices to prove the results for zero order Lévy operators, where $\mu(\R^d)<\infty$, since the general case then follows from a truncation argument (by assumption, $\lim_{r\to 0}\L^{\geq r}=\L$ for the involved expressions). Thus, we assume $\L$ is of zero order. 

The first inequality is a direct application of Lemma \ref{lem:ConvexFunctions} and so we focus on the second one: One gets the desired bound after writing out $\L[h(\phi)-h(\psi)](x)\sgn^{\pm}(\phi-\psi)(x)$ in full, and then use the following bound and identity
\begin{equation}\label{eq: twoUsefulIdentitiesForTheConcaveEstimate}
    \begin{split}
            \Big(h(\phi(x+z))-h(\psi(x+z))\Big)\sgn^\pm\big(\phi(x)-\psi(x)\big)\leq&\, \Big(h(\phi(x+z))-h(\psi(x+z))\Big)^{\pm},\\
    \Big(h(\phi(x))-h(\psi(x))\Big)\sgn^\pm\big(\phi(x)-\psi(x)\big)=&\, \Big(h(\phi(x))-h(\psi(x))\Big)^{\pm}.
    \end{split}
\end{equation}
The first line in \eqref{eq: twoUsefulIdentitiesForTheConcaveEstimate} follows from the two trivial implications 
\begin{equation*}
    w\in[0,1]\implies vw\leq (v)^+, \qquad  w\in[-1,0]\implies vw\leq (v)^-,
\end{equation*}
valid for all $v\in\R$, while the second line in \eqref{eq: twoUsefulIdentitiesForTheConcaveEstimate} follows from the fact that $h$ is non-decreasing.\nc
\end{proof}


\section{Additional identities for the bilinear operator}

We here show that the expression $\B_{x+y}[\phi,\psi]$ is somewhat analogous to the expression  
$$
(\nabla_x\phi+\nabla_y\phi)\cdot(\nabla_x\psi+\nabla_y\psi),
$$
and that we, in particular, need not introduce ``cross-terms'' for the diffusion in the doubling of variables argument as is needed in the local case (cf. \cite{Car99, KaRi03, MaPoTe02, MiVo03}). 

\begin{proposition}[Cross-terms formula]\label{prop: CarrilloStyleCross-Terms}
Assume \eqref{muassumption}. Let $\phi,\psi\in L^\infty(\R^d\times\R^d)$ be functions in $(x,y)$ and let $\psi$ have compact support. Using the operator notation from Section \ref{sec:AssumptionsConceptMain}, we then have
\begin{align*}
    &\,\int_{\R^{d}\times\R^d}\B_{x+y}[\phi,\psi]\dd x \dd y= \int_{\R^{d}\times\R^d}\B_{x,x+y}[\phi,\psi]+\B_{y,x+y}[\phi,\psi]\dd x \dd y,
\end{align*}
provided the expressions in the integrals are well defined in $L^1(\R^d\times\R^d)$.
\end{proposition}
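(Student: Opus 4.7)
The plan is to work first at the level of the truncated operators $\B^{\geq r}_{x+y}$, $\B^{\geq r}_{x,x+y}$, $\B^{\geq r}_{y,x+y}$ (where one integrates only over $\{|z|\geq r\}$), establish the analogous equality there by a bookkeeping argument, and then pass to the limit $r\to 0$ using the assumption that the full integrands are well-defined in $L^1(\R^d\times\R^d)$.

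For fixed $r>0$ the compact support of $\psi$ together with the bound $\int(|z|^2\wedge 1)\dd\mu(z)<\infty$ ensures that all integrands are absolutely integrable with respect to $\dd x\dd y\dd\mu(z)$, so Fubini's theorem applies freely. Subtracting the right-hand side from the left-hand side, the difference of the two expressions (for the truncated operators) can be written as
\begin{equation*}
\frac{1}{2}\int_{|z|\geq r}\int_{\R^d\times\R^d}\Big[\phi(x+z,y+z)-\phi(x+z,y)-\phi(x,y+z)+\phi(x,y)\Big]\Big[\psi(x+z,y+z)-\psi(x,y)\Big]\dd x\dd y\dd\mu(z).
\end{equation*}
Expanding this product yields eight terms. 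I will process each of them by a translation in $(x,y)$ so that $\phi$ always appears evaluated at $(x,y)$; for instance, in the term with $\phi(x+z,y+z)\psi(x,y)$ the substitution $x\mapsto x-z$, $y\mapsto y-z$ converts it to $\phi(x,y)\psi(x-z,y-z)$, and similarly for the five other mixed terms (each requires only a translation in $x$, in $y$, or in both). After these substitutions, the two diagonal terms $+\phi(x,y)\psi(x,y)$ and $-\phi(x,y)\psi(x,y)$ cancel, and what remains has the form
\begin{equation*}
\int_{|z|\geq r}\int_{\R^d\times\R^d}\phi(x,y)\,G(x,y,z)\,\dd x\dd y\dd\mu(z),
\end{equation*}
where
\begin{equation*}
G(x,y,z)=\bigl[\psi(x+z,y+z)-\psi(x-z,y-z)\bigr]+\bigl[\psi(x,y-z)-\psi(x,y+z)\bigr]+\bigl[\psi(x-z,y)-\psi(x+z,y)\bigr].
\end{equation*}

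The key observation is that $G(x,y,-z)=-G(x,y,z)$, so $G$ is odd in $z$. Since $\mu$ is symmetric, the change of variables $z\mapsto -z$ shows that the $z$-integral of $\phi(x,y)G(x,y,z)$ against $\dd\mu(z)$ vanishes identically (note that this integral is absolutely convergent for each fixed $(x,y)$ by the same argument used to prove absolute integrability above). Invoking Fubini once more, the whole difference vanishes for the truncated operators. The main (minor) obstacle is keeping track of the six translations carefully without sign errors; this is routine but must be done explicitly.

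Finally, to pass to the limit $r\to 0$ we invoke the standing hypothesis that each of $\B_{x+y}[\phi,\psi]$, $\B_{x,x+y}[\phi,\psi]$, and $\B_{y,x+y}[\phi,\psi]$ is well-defined in $L^1(\R^d\times\R^d)$, so the corresponding truncated versions converge in $L^1$ to their limits. Taking $r\to 0$ in the identity
\begin{equation*}
\int\int\B^{\geq r}_{x+y}[\phi,\psi]\dd x\dd y=\int\int\B^{\geq r}_{x,x+y}[\phi,\psi]+\B^{\geq r}_{y,x+y}[\phi,\psi]\dd x\dd y
\end{equation*}
then yields the proposition.
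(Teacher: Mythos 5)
Your proof is correct and follows essentially the same strategy as the paper's: truncate the operators, use the compact support of $\psi$ to justify Fubini, cancel the residual terms via changes of variables and the symmetry of $\mu$ under $z\mapsto -z$, and pass to the limit $r\to 0$ using the $L^1$ well-definedness hypothesis. The only difference is organizational — you show the difference of the two sides is the integral of a function odd in $z$, whereas the paper computes each of $\B^{\geq r}_{x,x+y}$ and $\B^{\geq r}_{y,x+y}$ separately (routing through the integration-by-parts identity) and cancels the two leftovers against each other — but the underlying cancellation is identical.
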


\begin{proof}
Fix $r>0$. Observe that the integrands in the following argument are absolutely integrable with respect to $\dd x\dd y\dd\mu(z)$ due to the compact support of $\psi$ and the truncation of our operators; in particular we are free to deploy Fubini's theorem. We compute
\begin{align*}
    &\,2\int_{\R^d\times\R^d}\B_{x,x+y}^{\geq r}[\phi,\psi]\dd x\dd y\\
    = &\, \int_{\R^d\times\R^d}\int_{|z|\geq r}\big(\phi(x+z,y)-\phi(x,y)\big)\big(\psi(x+z,y+z)-\psi(x,y)\big)\dd \mu(z)\dd x\dd y\\
    = &\,\int_{|z|\geq r}\int_{\R^d\times\R^d}\big(\phi(x,y-z)-\phi(x-z,y-z)\big)\psi(x,y)\dd x\dd y\dd \mu(z)\\
    &\quad-\int_{\R^d\times\R^d}\int_{|z|\geq r}\big(\phi(x+z,y)-\phi(x,y)\big)\psi(x,y)\dd \mu(z)\dd x\dd y\\
    = &\,\int_{\R^d\times\R^d}\int_{|z|\geq r}\big(\phi(x,y)-\phi(x-z,y-z)\big)\psi(x,y)\dd\mu(z)\dd x\dd y\\
    &\quad + \int_{\R^d\times\R^d}\int_{|z|\geq r}\big(\phi(x,y-z)-\phi(x+z,y)\big)\psi(x,y)\dd \mu(z)\dd x\dd y.
\end{align*}
By the change of variables $z\mapsto -z$ and the symmetry of $\dd\mu(z)$ we see that the first integral on the right-hand side coincides with
\begin{align*}
    -\int_{\R^{d}\times \R^d}\L_{x+y}^{\geq r}[\phi]\psi\dd x\dd y =  \int_{\R^{d}\times \R^d}\B_{x+y}^{\geq r}[\phi,\psi]\dd x\dd y ,
\end{align*}
where the identity follows from Proposition \ref{prop: integrationByPartsFormula}. By a similar computation for the $\B_{y,x+y}^{\geq r}$ operator, we conclude
\begin{align*}
   &\,\int_{\R^d\times\R^d}\Big(\B_{x,x+y}^{\geq r}+ \B_{y,x+y}^{\geq r}\Big)[\phi,\psi] \dd x\dd y\\
   = &\, \int_{\R^d\times\R^d}\B_{x+y}^{\geq r}[\phi,\psi]\dd x\dd y \\
     &\,+ \frac{1}{2}\int_{\R^d\times\R^d}\int_{|z|\geq r}\big(\phi(x,y-z)-\phi(x+z,y)\big)\psi(x,y)\dd \mu(z)\dd x\dd y\\
    &\, + \frac{1}{2}\int_{\R^d\times\R^d}\int_{|z|\geq r}\big(\phi(x-z,y) - \phi(x,y+z)\big)\psi(x,y)\dd \mu(z)\dd x\dd y.
\end{align*}
The two last integrals cancel out, 
as can be seen by substituting $z\mapsto -z$ in either, and so the result follows by letting $r\to0$.
\end{proof}

We also introduce a specific product rule, which is useful in the uniqueness proof.
\begin{lemma}[An auxiliary product rule]\label{aSpecialProductRule}
Assume \eqref{muassumption}. Let $\phi\in L^\infty(\R^d\times\R^d)$ be a function in $(x,y)$, and let $\zeta^x,\zeta^y,\rho\in L^\infty(\R^d)$ be functions in $x$, $y$, and $x-y$ respectively. Let further $\z^x$ and $\z^y$ have compact support. Using the operator notation from Section \ref{sec:AssumptionsConceptMain}, we then have 
\begin{align*}
       &\,\int_{\R^d\times\R^d}\B_{x,x+y}\Big[\phi,\z^x\z^y\Big]\rho\dd x\dd y\\
   = &\,\int_{\R^d\times\R^d}\B_x\Big[\phi(\cdot,y),\z^x\Big]\z^y\rho\dd x \dd y + \int_{\R^d\times\R^d}\phi\Big(\B_y\Big[\z^y,\rho(x-\cdot)\Big] \z^x -\B_{x,y}\Big[\z^x,\z^y\Big]\rho\Big)\dd x \dd y,
\end{align*}
provided the expressions in the integrals are well-defined in $L^1(\R^d\times \R^d)$.
\end{lemma}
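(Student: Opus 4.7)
The identity should follow from direct algebraic manipulation of truncated versions $\B_{x,x+y}^{\geq r}$, $\B_x^{\geq r}$, $\B_y^{\geq r}$, $\B_{x,y}^{\geq r}$, followed by letting $r\to 0$ at the end using the assumed $L^1(\R^d\times\R^d)$-integrability of the limit integrands. The compact support of $\zeta^x$ and $\zeta^y$ together with the truncation for fixed $r>0$ make every integrand below absolutely integrable against $\dd x\dd y\dd\mu(z)$, so Fubini and translations $x\mapsto x\pm z$, $y\mapsto y\pm z$, $z\mapsto -z$ (the last by symmetry of $\mu$) may be freely applied.

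The starting point is the product decomposition
\[
\zeta^x(x+z)\zeta^y(y+z)-\zeta^x(x)\zeta^y(y) = \big(\zeta^x(x+z)-\zeta^x(x)\big)\zeta^y(y)+\zeta^x(x)\big(\zeta^y(y+z)-\zeta^y(y)\big)+\big(\zeta^x(x+z)-\zeta^x(x)\big)\big(\zeta^y(y+z)-\zeta^y(y)\big).
\]
Inserting this into the definition \eqref{eq: thePartialBs} of $\B_{x,x+y}^{\geq r}$ and integrating against $\rho(x-y)\dd x\dd y$, I split the left-hand side as $I_1^{\geq r}+I_2^{\geq r}+I_3^{\geq r}$. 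The first piece $I_1^{\geq r}$ is, by inspection, exactly $\int \B_x^{\geq r}[\phi,\zeta^x]\zeta^y\rho\dd x\dd y$, so the remaining task is to identify $I_2^{\geq r}+I_3^{\geq r}$ with $\int\phi\bigl(\B_y^{\geq r}[\zeta^y,\rho]\zeta^x-\B_{x,y}^{\geq r}[\zeta^x,\zeta^y]\rho\bigr)\dd x\dd y$.

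The crucial manipulation is to move the $\phi$-difference $\phi(x+z,\cdot)-\phi(x,\cdot)$ off $\phi$ and onto the remaining factors by performing the change of variables $x\mapsto x-z$ in the ``shifted'' part of each integrand. In $I_2^{\geq r}$ this produces the bracket $\zeta^x(x-z)\rho(x-z-y)-\zeta^x(x)\rho(x-y)$ acting on $\phi(x,y)\bigl(\zeta^y(y+z)-\zeta^y(y)\bigr)$. A parallel computation of $\int\phi\,\B_y^{\geq r}[\zeta^y,\rho]\zeta^x\dd x\dd y$ (using $z\mapsto -z$ once to match signs) reveals that the ``aligned'' part of the bracket exactly agrees, and the difference $I_2^{\geq r}-\int\phi\,\B_y^{\geq r}[\zeta^y,\rho]\zeta^x$ reduces to one residual symmetric cross-term
\[
R^{\geq r}:=\tfrac{1}{2}\int\!\!\int_{|z|\geq r}\phi(x,y)\bigl(\zeta^x(x-z)-\zeta^x(x)\bigr)\bigl(\zeta^y(y+z)-\zeta^y(y)\bigr)\rho(x-z-y)\dd\mu(z)\dd x\dd y.
\]
Performing the same $x\mapsto x-z$ shift on $I_3^{\geq r}$ (now in both differences of $\zeta^x$ and of $\zeta^y$) and comparing with $\int\phi\,\B_{x,y}^{\geq r}[\zeta^x,\zeta^y]\rho$ produces precisely $I_3^{\geq r}+\int\phi\,\B_{x,y}^{\geq r}[\zeta^x,\zeta^y]\rho=-R^{\geq r}$. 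Adding the two relations gives $I_2^{\geq r}+I_3^{\geq r}=\int\phi\bigl(\B_y^{\geq r}[\zeta^y,\rho]\zeta^x-\B_{x,y}^{\geq r}[\zeta^x,\zeta^y]\rho\bigr)\dd x\dd y$, which is the desired identity at the truncated level.

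Finally, by the hypothesis that the integrands $\B_{x,x+y}[\phi,\zeta^x\zeta^y]$, $\B_x[\phi,\zeta^x]\zeta^y$, $\B_y[\zeta^y,\rho]\zeta^x$, and $\B_{x,y}[\zeta^x,\zeta^y]\rho$ are all well-defined in $L^1(\R^d\times\R^d)$, the truncated versions converge to their untruncated counterparts in $L^1$ as $r\to 0$, and the lemma follows by passing to the limit in the identity established for fixed $r>0$. The main bookkeeping obstacle is to track precisely the cancellations under the successive substitutions $x\mapsto x-z$ and $z\mapsto -z$, exploiting throughout that $\rho$ depends only on $x-y$ (so a shift in $x$ is interchangeable with the opposite shift in $y$), which is exactly what allows the operator $\B_y$ acting on $\rho$ to appear on the right-hand side.
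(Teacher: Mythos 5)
Your proof is correct and follows essentially the same route as the paper's: truncate the operators, apply Fubini and the substitutions $x\mapsto x-z$, $z\mapsto -z$ to redistribute the $\phi$-difference onto the remaining factors, and pass to the limit $r\to0$ using the assumed $L^1$ well-definedness. The only cosmetic difference is that you use the three-term Leibniz splitting of $\zeta^x(x+z)\zeta^y(y+z)-\zeta^x(x)\zeta^y(y)$ and cancel a residual cross-term $R^{\geq r}$ between $I_2^{\geq r}$ and $I_3^{\geq r}$, whereas the paper uses the two-term splitting with $\zeta^x(x+z)$ in the second factor and an explicit add-and-subtract; the cancellations you describe do check out.
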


\begin{proof}
Fix $r>0$. The compact support of $\z^x,\z^y$ ensures that the following integrals are well-defined. Adding and subtracting $\z^x(x+z)\z^y(y)$ to the integrand, and performing a few changes of variables, we compute
\begin{align*}   &\,\int_{\R^d\times\R^d}\B_{x,x+y}^{\geq r}\Big[\phi,\z^x\z^y\Big]\rho\dd x\dd y\\
    = &\,\frac{1}{2}\int_{\R^d\times\R^d}\int_{|z|\geq r}\big(\phi(x+z,y)-\phi(x,y)\big)\big(\z^x(x+z)\z^y(y+z) - \z^x(x)\z^y(y)\big)\rho(x-y)\dd \mu(z)\dd x\dd y\\
    = &\,\frac{1}{2}\int_{\R^d\times\R^d}\int_{|z|\geq r}\big(\phi(x+z,y)-\phi(x,y)\big)\big(\z^x(x+z) - \z^x(x)\big)\z^y(y)\rho(x-y)\dd \mu(z)\dd x\dd y\\
    &+\frac{1}{2}\int_{\R^d\times\R^d}\int_{|z|\geq r}\big(\phi(x+z,y)-\phi(x,y)\big)\big(\z^y(y+z) - \z^y(y)\big)\z^x(x+z)\rho(x-y)\dd \mu(z)\dd x\dd y.
\end{align*}
Note that the first integral on the right-hand side coincides with $\int_{\R^d\times\R^d}\B^{\geq r}_{x}[\phi(\cdot,y),\z^x]\z^y\rho\dd x\dd y$. For the second integral, we split it up and perform a change of variables giving  
\begin{align*}
&\, \frac{1}{2}\int_{\R^d\times\R^d}\int_{|z|\geq r}\big(\phi(x+z,y)-\phi(x,y)\big)\big(\z^y(y+z) - \z^y(y)\big)\z^x(x+z)\rho(x-y)\dd \mu(z)\dd x\dd y\\
        = &\,\frac{1}{2}\int_{\R^d\times\R^d}\int_{|z|\geq r}\phi(x,y)\big(\z^y(y+z) - \z^y(y)\big)\z^x(x) \rho(x-z-y) \dd \mu(z)\dd x\dd y\\
    &\quad-\frac{1}{2}\int_{\R^d\times\R^d}\int_{|z|\geq r}\phi(x,y)\big(\z^y(y+z) - \z^y(y)\big)\z^x(x+z)\rho (x-y)\dd \mu(z)\dd x\dd y,
\end{align*}
If we further add and subtract the integral of $\phi(x,y)\big(\z^y(y+z)-\z^y(y)\big)\z^x(x)\rho(x-y)$ to this right-hand side we get
\begin{align*}   \int_{\R^d\times\R^d}\phi\B_y^{\geq r}\Big[\z^y,\rho(x-\cdot)\Big] \z^x\dd x\dd y - \int_{\R^d\times\R^d}\phi\B_{x,y}^{\geq r}\Big[\z^x,\z^y\Big]\rho\dd x \dd y,
\end{align*}
where we used that $\rho(x-z-y)=\rho(x-(y+z))$. Letting $r\to 0$ gives the result.
\end{proof}




\end{document}